\numberwithin{equation}{section}
\newcommand{\pd}[2]{\frac {\partial #1}{\partial #2}}
\newcommand{\al}{\alpha}
\newcommand{\bb}{\beta}
\newcommand{\la}{\lambda}
\newcommand{\La}{\Lambda}
\newcommand{\oo}{\omega}
\newcommand{\Om}{\Omega}
\newcommand{\dd}{\delta}
\newcommand{\Na}{\nabla}
\newcommand{\ee}{\epsilon}
\newcommand{\si}{\sigma}
\newcommand{\te}{\theta}
\newcommand{\beq}{\begin{equation}}
\newcommand{\eeq}{\end{equation}}
\newcommand{\beqs}{\begin{eqnarray*}}
\newcommand{\eeqs}{\end{eqnarray*}}
\newcommand{\beqn}{\begin{eqnarray}}
\newcommand{\eeqn}{\end{eqnarray}}
\newcommand{\beqa}{\begin{array}}
\newcommand{\eeqa}{\end{array}}
\def\td{\tilde}
\def\p{\partial}
\def\b{\bar}
\def\PP{{\mathbb P}}
\def\CC{{\mathbb C}}
\def\ri{\rightarrow}
\def\un{\underline}
\def\si{\sigma}
\def\tr{{\rm tr}}
\def\i{{\sqrt{-1}}}
\def\Up{\Upsilon}
\newtheorem{prop}{Proposition}[section]
\newtheorem{theo}[prop]{Theorem}
\newtheorem{lem}[prop]{Lemma}
\newtheorem{cor}[prop]{Corollary}
\newtheorem{rem}[prop]{Remark}
\newtheorem{defi}[prop]{Definition}
\title{The Futaki invariant on the blowup of K\"ahler surfaces\footnote{
The first-named author is  supported in part by NSFC No. 11001080 and No. 11131007 and
the second-named author
by NSFC No. 11101206.
}}
\author{Haozhao Li,\quad Yalong Shi  }
\begin{document}
\bibliographystyle{plain}

\date{}

\maketitle

\begin{abstract}
  We prove the expansion formula for the classical
  Futaki invariants on the blowup of K\"ahler surfaces, which explains the balancing
  condition of Arezzo-Pacard in \cite{[AP2]}.  The relation with Stoppa's result
   \cite{[Stoppa]} is also discussed.
\end{abstract}

\tableofcontents

\section{Introduction}
In \cite{[Ca1]}, E. Calabi introduced the extremal K\"ahler metric
on a compact K\"ahler manifold, which is a critical point of the
Calabi functional. A special case of extremal K\"ahler metrics is
the constant scalar curvature K\"ahler (cscK for brevity) metrics. The uniqueness of extremal
K\"ahler metric was proved by Chen-Tian in \cite{[CT]}. However, the
existence of extremal K\"ahler metrics or constant scalar curvature
metrics is a long standing difficult problem, which is closely
related to some stabilities conditions in algebraic geometry. In
some special cases, the extremal metrics can be constructed
explicitly and they have many interesting properties (cf.
\cite{[Ca1]} \cite{[TF1]}\cite{[ACGF]}). In a series of papers,
Arezzo-Pacard \cite{[AP1]} \cite{[AP2]} and Arezzo-Pacard-Singer
\cite{[APS]}  get a general existence result on the blowup of a
K\"ahler manifold with extremal K\"ahler metrics or constant scalar
curvature metrics at finite many points with some conditions by
using a gluing method.

To state Arezzo-Pacard's theorem, we introduce some notations. Let
$(M, \oo)$ be a compact K\"ahler manifold with a K\"ahler metric
$\oo$, and $K$ the group of automorphisms of $M$ which are also
exact symplectorphisms of $(M, \oo)$. There is a normalized moment
map
$$\xi: M\ri \frak k^*,$$
where $\frak k$ is the Lie algebra of $K.$ Moreover, for any $X\in
\frak k$, $\langle \xi, X\rangle$ is a Hamiltonian generating
function on $X$ and satisfying the normalization condition \beq
\int_M\;\langle \xi, X\rangle\,\oo_g^n=0.\label{eq:xi}\eeq
\begin{theo}\label{theo:AP}(Arezzo-Pacard \cite{[AP2]})Let $(M, \oo)$ be a compact K\"ahler
manifold with constant scalar curvature metric $\oo$, and $\pi: \td
M\ri M$ the blow up at distinct points $\{p_1, \cdots, p_k\}$
satisfying the following conditions
\begin{enumerate}
  \item[(1)] $\xi(p_1), \cdots, \xi(p_n)$ span $\frak k;$
  \item[(2)] there exists $a_1, \cdots, a_k>0$ such that
  \beq \sum_{j=1}^k\,a_j \xi(p_j)=0\in \frak k.\label{eq0001}
  \eeq
\end{enumerate}Then, there exist $c>0, \ee_0>0$ and for all $\ee\in (0,
\ee_0)$, there exists on $\td M$ a constant scalar curvature metric
$\oo_{\ee}$ in the K\"ahler class
$$\oo_{\ee}\in \pi^*[\oo]-\ee^2(a_{1, \ee}^{\frac 1{m-1}}c_1([E_1])+\cdots+ a_{k, \ee}^{\frac 1{m-1}}c_1([E_k]))$$
where $a_{j, \ee}$ satisfies $|a_{j, \ee}-a_j|\leq c\ee^{\frac
2{2m+1}}.$

\end{theo}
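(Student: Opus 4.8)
I will prove Theorem \ref{theo:AP} by the gluing (generalized connected sum) method: build on $\td M$ an approximately constant scalar curvature K\"ahler metric $\oo_\ee$ by grafting rescaled copies of a scalar-flat K\"ahler ALE model near the exceptional divisors $E_1,\dots,E_k$ onto $\pi^*\oo$, and then perturb $\oo_\ee$ to an exact cscK metric by a fixed-point argument in which conditions (1)--(2) are exactly what is needed to kill the obstruction coming from $\frak k$.

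\textbf{Step 1 (local model and approximate metric).} On $\Bl_0\CC^m$ there is a scalar-flat K\"ahler ALE metric $\eta$ — the Burns metric when $m=2$ — whose K\"ahler potential has the expansion $|z|^2+c_m|z|^{4-2m}+O(|z|^{2-2m})$ at infinity (with a $\log|z|$ term when $m=2$). Near each $p_j$ I pick holomorphic coordinates in which $\oo$ admits a K\"ahler potential agreeing with $|z|^2$ to sufficiently high order, fix a gluing radius $r_\ee=\ee^\ga$ for a suitable $\ga\in(0,1)$, and interpolate in the K\"ahler potential across the annulus $r_\ee\le|z|\le 2r_\ee$ between $\pi^*\oo$ outside and $\ee^2 a_j^{1/(m-1)}\eta$, appropriately rescaled, near $E_j$. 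A cut-off argument produces a genuine K\"ahler form $\oo_\ee$ on $\td M$ in the class prescribed by the statement, whose scalar curvature differs from the constant $\un S$ by an error $E_\ee$ supported in the gluing annuli and bounded, in a weighted H\"older norm adapted to the connected sum, by a positive power of $\ee$.

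\textbf{Step 2 (linear theory and fixed point).} Writing a candidate metric as $\oo_\ee+\pbp\varphi$, the cscK equation takes the schematic form $\cL_{\oo_\ee}\varphi=E_\ee+Q_\ee(\varphi)$, where $\cL_{\oo_\ee}$ is the Lichnerowicz operator of $\oo_\ee$ and $Q_\ee$ gathers the higher-order terms. I would build a right inverse for $\cL_{\oo_\ee}$ on weighted spaces by gluing the right inverse of $\cL_\oo$ on $M$ — which exists only modulo its kernel, the holomorphy potentials of the vector fields in $\frak k$ — to the right inverse of the model operator on $\Bl_0\CC^m$, obtaining a norm bound of the form $C\ee^{-\de}$ for an explicit $\de>0$. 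Since $\cL_\oo$ has genuine kernel $\frak k$, the equation can be solved only after projecting $E_\ee$ off a finite-dimensional obstruction space modelled on $\frak k$; once that is arranged, a contraction-mapping argument on a small ball in the weighted space produces a solution $\varphi_\ee$, hence an exact cscK metric, for each small $\ee$.

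\textbf{Step 3 (removing the obstruction; the role of (1)--(2)).} The remaining component of $E_\ee$ defines, to leading order, the linear functional $X\mapsto\sum_j a_j\lr{\xi(p_j)}{X}$ on $\frak k$ (up to universal positive constants and scaling exponents), so the balancing condition \eqref{eq0001} makes this leading obstruction vanish. The spanning condition (1) then guarantees that $(a_1,\dots,a_k)\mapsto\sum_j a_j\,\xi(p_j)$ is a submersion at the balanced configuration, so the weights can be adjusted to values $a_{j,\ee}$ with $|a_{j,\ee}-a_j|\le c\,\ee^{2/(2m+1)}$ that cancel the full obstruction. The main obstacle is to carry out this adjustment \emph{simultaneously} with the contraction mapping: one sets up a coupled fixed-point problem in $(\varphi,a_{1,\ee},\dots,a_{k,\ee})$ and must track every power of $\ee$ — balancing the size of $E_\ee$, the blow-up rate $\ee^{-\de}$ of the right inverse, and the permitted error in the weights — so that the combined map is a contraction on a ball whose radius tends to $0$ with $\ee$. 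This bookkeeping also pins down the admissible exponent $\ga$ and yields the stated bound on $|a_{j,\ee}-a_j|$.
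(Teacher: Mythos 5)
You should note first that the paper does not prove this statement at all: Theorem \ref{theo:AP} is quoted as background from Arezzo--Pacard \cite{[AP2]}, and the paper's own contribution (Theorem \ref{theo:main1}) is a Futaki-invariant expansion that serves as a partial converse. So there is no internal proof to compare with; what your proposal reconstructs is the strategy of the cited source. As a roadmap it is faithful to that strategy (ALE scalar-flat model of Burns--Simanca type glued in at scale $\ee$, weighted linear theory with the kernel of the Lichnerowicz operator of $\oo$ given by the holomorphy potentials of $\frak k$, and conditions (1)--(2) used to kill, respectively adjust away, the finite-dimensional obstruction), and your reading of the roles of the two hypotheses is the right one.

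However, as a proof it has genuine gaps, because every analytically hard step is announced rather than carried out. Concretely: (i) Step 2 asserts the existence of a right inverse of $\cL_{\oo_\ee}$ with a bound $C\ee^{-\de}$ on weighted spaces; constructing it by patching the inverse on $(M\setminus\{p_j\},\oo)$ modulo $\frak k$ with the inverse on the ALE model, and proving the bound uniformly in $\ee$, is the core of \cite{[AP1]}--\cite{[AP2]} and cannot be taken for granted. (ii) Step 3 asserts that the projection of the error onto $\frak k$ is, to leading order, $X\mapsto\sum_j a_j\lr{\xi(p_j)}{X}$ ``up to universal positive constants and scaling exponents''; this identification is exactly the computation that produces the balancing condition (\ref{eq0001}), and the power of $a_j$ that appears (note the class carries $a_{j,\ee}^{1/(m-1)}$, so the weight enters the leading term through the $(m-1)$-st power of the coefficient of $c_1([E_j])$) must be derived, not waved at, since getting it wrong changes the statement being proved. (iii) The coupled fixed point in $(\varphi,a_{1,\ee},\dots,a_{k,\ee})$, whose bookkeeping yields both the admissible gluing exponent $\ga$ and the bound $|a_{j,\ee}-a_j|\le c\,\ee^{2/(2m+1)}$, is precisely the part you flag as ``the main obstacle'' and then leave undone; without it one gets neither the contraction nor the stated rate. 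In short, your outline is the correct route --- the same one the paper's citation relies on --- but the three items above are the substance of the theorem, and supplying them amounts to reproducing the Arezzo--Pacard analysis rather than filling in routine details.
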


Condition (2) in Theorem \ref{theo:AP} is called the balancing
condition and it should be related with the stability property of
the blowing up manifold. In \cite{[Stoppa]} J. Stoppa gives the
expansion of the Donaldson-Futaki invariant on the blown up manifold and he
shows that the conditions (2) is naturally related to the Chow
stability of 0-dimensional cycles. Using this formula, he
proved that if we blow up a cscK manifold with integral K\"ahler
class $[\oo]$ at a Chow unstable 0-cycle $\sum_i a_i^{n-1}p_i$,
then for any rational $0<\epsilon\ll1$, the class $\pi^*[\oo]-\epsilon(\sum_ia_iE_i)$ does not contain a cscK metric, since this new polarized manifold is K-unstable.

A natural question is whether we can remove the rationality
assumption in Stoppa's theorem. Recall that when the K\"ahler
class is polarized by an ample line bundle $L$ (hence $M$ is
 a projective algebraic manifold) and the holomorphic vector field $X$
 generates a $\CC^*$ action, the Donaldson-Futaki invariant for the
  induced product test configuration coincides with the classical Futaki
  invariant of $X$ up to a universal constant \cite{[Do1]}.
  Since the vanishing of Futaki invariant is an obstruction
  to the existence of cscK metric, we can prove the non-existence
  of cscK metrics by a corresponding expansion formula
  for the classical Futaki invariant on the blown up manifold.
  In this paper, we will prove such a formula. For technical reasons, we restrict our attention to complex dimension 2.

 \begin{theo}\label{theo:main1}Let $\pi: \td M\ri M$ be the blowing up map of a compact K\"ahler surface
$M$ at the distinct points $\{p_1, p_2, \cdots, p_n\}. $ If the holomorphic
vector field $X$ on $M$
vanishes and is non-degenerate at $p_i(1\leq i\leq n)$, then we have
$$f_{\td M}(\td \Om_{\ee}, \td X)=f_M(\Om,   X)+\sum_{i=1}^n\nu_{p_i}(\Om, X)\cdot \ee_i+O(\ee^2),$$
where $\td X$ is the natural holomorphic extension of $X$ over $\td M$, and the K\"ahler class $\td \Om_{\ee}$ is
$$\td
\Om_{\ee}=\pi^*\Om-\sum_{i=1}^n\ee_ic_1([E_i]).$$ Here $\ee_i>0$
are small numbers and $\nu_{p_i}(\Om, X)$ are given by
$$ \nu_{p_i}(\Om,
X)=   -2\tr_{\Om}(X)(p_i)+\frac {2}{3\Om^2}J_{M}(\Om, X)
=-2(\te_X-\underline{\te_X})(p_i),$$
where $\te_X$ is the holomorphy potential of $X$ with respect to $\Om$, and $\underline{\te_X}$ is the average of $\te_X$.

\end{theo}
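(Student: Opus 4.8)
The plan is to compute $f_{\td M}(\td\Om_\ee,\td X)$ directly from a convenient representative metric, exploiting that the Futaki invariant depends only on the K\"ahler class. Recall that, with the normalisation fixed earlier, for any $\oo$ in a class $\Om$ on a compact K\"ahler surface and any holomorphy potential $\te_X$ of $X$ one has $f_M(\Om,X)=\int_M \te_X\,(\underline{S}_\oo-S_\oo)\,\oo^2$, and this is independent of the choice of $\oo$ and of the additive constant in $\te_X$ because $\int_M(\underline{S}_\oo-S_\oo)\,\oo^2=0$. Fix $\oo\in\Om$ for which $X$ is Hamiltonian, fix small geodesic balls $B_i=B(p_i,r)$, and build $\td\oo_\ee\in\td\Om_\ee$ as in Arezzo--Pacard: $\td\oo_\ee=\pi^*\oo$ outside $\bigcup_i\pi^{-1}(B_i)$, while on $\pi^{-1}(B_i)$, in coordinates $z$ centred at $p_i$, $\td\oo_\ee=\pi^*\oo+\pbp(\chi_i u_{\ee_i})$, where $u_{\ee_i}$ is a rescaling by $\ee_i$ of the K\"ahler potential of the scalar-flat Burns metric $\eta_i$ on $\Bl_0\CC^2$ and $\chi_i$ is a cut-off; the scaling is the one for which $\int_{E_i}\td\oo_\ee=\ee_i$ (up to a universal constant), so $E_i$ has metric size $\sim\sqrt{\ee_i}$. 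Since $X(p_i)=0$, its flow fixes $p_i$ and lifts to a holomorphic field $\td X$ on $\td M$; because the linearisation of $X$ at $p_i$ is non-degenerate one may take $\eta_i$, hence $\td\oo_\ee$, to be $\td X$-invariant near $E_i$.

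Two ingredients are cohomological and immediate. First, $\underline{S}_{\td\oo_\ee}$ is the topological average: using $c_1(\td M)=\pi^*c_1(M)-\sum_i c_1([E_i])$, $c_1([E_i])\cdot c_1([E_j])=-\de_{ij}$ on the surface $\td M$, and $\pi^*\alpha\cdot c_1([E_i])=0$, one finds $\underline{S}_{\td\oo_\ee}=\underline{S}_M-\tfrac{C}{\Om^2}\sum_i\ee_i+O(\ee^2)$ for a universal constant $C$. Second, the holomorphy potential $\te_{\td X}$ of $\td X$ with respect to $\td\oo_\ee$, defined by $\iota_{\td X}\td\oo_\ee=d\te_{\td X}$, equals $\pi^*\te_X$ on the outer region (modulo an additive constant, which is irrelevant for $f_{\td M}$ since $\int_{\td M}(\underline{S}_{\td\oo_\ee}-S_{\td\oo_\ee})\td\oo_\ee^2=0$), and near $E_i$ has the form $\pi^*\te_X+\psi_i$ with $\psi_i|_{E_i}$ the Hamiltonian of $\td X|_{E_i}$ on $(E_i,\td\oo_\ee|_{E_i})$; in particular $\|\te_{\td X}-\pi^*\te_X\|_{C^0}=O(\ee)$, and $\pi^*\te_X\to\te_X(p_i)$ along $E_i$.

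Next we split $f_{\td M}(\td\Om_\ee,\td X)=\int_{\td M\setminus\bigcup_i\pi^{-1}(B_i)}+\sum_i\int_{\pi^{-1}(B_i)}$. On the outer piece $\td\oo_\ee=\pi^*\oo$ and $S_{\td\oo_\ee}=S_\oo$, so it equals $\int_{M\setminus\bigcup_iB_i}\te_X(\underline{S}_{\td\oo_\ee}-S_\oo)\oo^2$; expanding $\underline{S}_{\td\oo_\ee}=\underline{S}_M+O(\ee)$ and using $\int_M\te_X\oo^2=0$ and $\int_M(\underline{S}_M-S_\oo)\oo^2=0$, this is $f_M(\Om,X)-\sum_i\int_{B_i}\te_X(\underline{S}_M-S_\oo)\oo^2$, plus one explicit $O(\ee)$ remainder to be cancelled below. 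The heart of the proof is the inner integral over each $\pi^{-1}(B_i)$, analysed region by region after the rescaling $w=z/\sqrt{\ee_i}$: in the far core ($1\ll|w|\ll r/\sqrt{\ee_i}$) $\td\oo_\ee$ is Euclidean to leading order, reproducing $\int_{B_i}\te_X(\underline{S}_M-S_\oo)\oo^2$ up to $O(\ee)$ and cancelling the matching outer term, while the genuine $O(\ee_i)$ contributions come from (i) the shift $\underline{S}_{\td\oo_\ee}-\underline{S}_M$, (ii) the defect $\td\oo_\ee^2-(\pi^*\oo)^2=2\,\pi^*\oo\wedge\pbp(\chi_i u_{\ee_i})+(\pbp(\chi_i u_{\ee_i}))^2$ on the core, (iii) the defect $S_{\td\oo_\ee}-S_\oo$ near $E_i$, which by scalar-flatness and the ALE asymptotics of the Burns metric is small and localises, and (iv) the correction $\psi_i$ in $\te_{\td X}$. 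A careful accounting of (i)--(iv) — using integration by parts on $\Bl_0\CC^2$, the identities $c_1([E_i])^2=-1$ and $\int_{E_i}\td\oo_\ee=\ee_i$, and the value $\te_X(p_i)$ at the zero $p_i$ — should collapse the $\ee_i$-coefficient to $-2\big(\te_X-\underline{\te_X}\big)(p_i)=\nu_{p_i}(\Om,X)$. The main obstacle is exactly this matching: showing that the transition (neck) region contributes only $O(\ee^2)$ and that the resulting coefficient is independent both of the auxiliary radius $r$ and of the particular model metric used; this is where the restriction to complex dimension $2$ (so that $E_i\cong\PP^1$ and the Burns model is explicit) enters. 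An alternative route, which makes the cancellation of the contributions away from the $p_i$ automatic, is to apply the Bott--Futaki localisation formula to the $\td X$-fixed locus of $\td M$: the components in $\td M\setminus\bigcup_iE_i$ contribute exactly as on $M$, while each $E_i$ contributes a rational function of $\ee_i$ whose first-order Taylor coefficient is $\nu_{p_i}(\Om,X)$; carrying this out requires the equivariant normal-bundle data of $E_i$ and the restriction of $\td\Om_\ee$ to $E_i$, which again simplifies in dimension $2$.

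Finally, to identify $\nu_{p_i}(\Om,X)=-2\tr_\Om(X)(p_i)+\tfrac{2}{3\Om^2}J_M(\Om,X)$ with $-2(\te_X-\underline{\te_X})(p_i)$, we recall that $\tr_\Om(X)$ is the holomorphy potential of $X$ in the normalisation introduced earlier, whose average equals $\tfrac{1}{3\Om^2}J_M(\Om,X)$; hence $-2\tr_\Om(X)(p_i)+\tfrac{2}{3\Om^2}J_M(\Om,X)=-2\big(\tr_\Om(X)-\underline{\tr_\Om(X)}\big)(p_i)$, and this difference — being insensitive to the additive constant in the holomorphy potential — equals $-2(\te_X-\underline{\te_X})(p_i)$.
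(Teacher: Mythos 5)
Your proposal stops short of a proof exactly where the content of the theorem lies. In your main (gluing) route the first-order coefficient is never actually computed: you assert that the neck region contributes only $O(\ee^2)$ and that the accounting of (i)--(iv) ``should collapse'' to $-2(\te_X-\underline{\te_X})(p_i)$, but neither claim is established, and this coefficient is the whole statement. In addition, the preparatory claim that the Burns model, hence $\td\oo_\ee$, can be taken $\td X$-invariant near $E_i$ is unjustified: non-degeneracy of the linearisation $A=DX(p_i)$ does not make the local flow $\exp(tA)$ act by isometries of any chosen model (for a non-semisimple Jordan block the flow does not lie in a compact group), so even the setup of the inner analysis is problematic in general; and the case where $p_i$ lies on a one-dimensional component of $\mathrm{Zero}(X)$ (covered by the theorem and treated in Section \ref{sec4} of the paper) is not discussed at all.

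The ``alternative route'' you mention, localisation, is in fact the paper's method, but your sketch omits its hardest ingredient. By Lemma \ref{lemzero}, when $A$ is not semisimple the lifted field $\td X$ has a \emph{degenerate} zero on $E_i$, so the Bott--Futaki--Tian residue formula (Theorem \ref{lem:futaki}, Corollary \ref{corfutaki}) does not apply to that component, and one cannot say that ``each $E_i$ contributes a rational function of $\ee_i$'' read off from equivariant normal-bundle data. The paper resolves this by first linearising $X$ near $p_i$ via Poincar\'e's theorem and then evaluating the boundary integrals (\ref{eq013}) and (\ref{eq016}) directly over the distorted balls $\{|\td X|_g^2\le r^4\}$ (Lemma \ref{lem:main} and Section \ref{sec3.4}); some argument of this kind is indispensable and is absent from your proposal. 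For the non-degenerate cases the paper needs no gluing analysis at all: Corollary \ref{corfutaki} gives exact rational expressions in $\ee_i$ (with $\td B=\te_p-a\ee$ computed from the explicit metric on $[E]$), which is considerably more economical than controlling scalar-curvature defects of an approximate metric. Your final identification $-2\tr_\Om(X)(p_i)+\frac{2}{3\Om^2}J_M(\Om,X)=-2(\te_X-\underline{\te_X})(p_i)$ is correct, since $\underline{\te_X}=J_M(\Om,X)/(3\Om^2)$ by (\ref{eq:JM}), but the core expansion remains unproved as written.
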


The notations in Theorem \ref{theo:main1} will be introduced in
Section \ref{sec2}. Note that $(\te_X-\underline{\te_X})(p_i)$ is independent of the choices of $\oo_g$ and $\te_X$, see Lemma \ref{lem:potential}.
When the manifold is a projective algebraic surface, the polarization is asymptotically Chow stable and the holomorphic vector field generates
a $\CC^*$ action, then  $(\te_X-\underline{ \te_x})(p_i)$ equals the Chow weight of $p_i$ up to a universal constant factor and hence
our result coincides with Stoppa's. For details, see section \ref{secStoppa}.\\

The proof of Theorem \ref{eq0001} is based on
Futaki \cite{[Futaki]} and Tian's localization formula in
\cite{[Tian98]} for the Futaki invariant, which essentially uses
Bott's residue formula for characteristic numbers in \cite{[Bott1]}.
However,  Bott's residue formula needs the non-degeneracy condition
on the holomorphic vector fields, and it will be difficult to remove
this condition when calculating the Futaki invariant. When we
consider the blown up manifold as in Arezzo-Pacard's result, under the
non-degeneracy assumption the induced holomorphic vector field on
the blown up manifold may still be degenerate somewhere and we
need to calculate the residue carefully in this case.

The key step in the proof of Theorem \ref{theo:main1} is to
calculate the  residue formula in the degenerate case, and we use
only  elementary calculus. It should be generalized to higher
dimensions. We note that there is a vast amount of literatures
discussing various residue formulas on $\CC^n$ (cf. \cite{[PTY]} \cite{[Tsikh]}
and references therein), but few of them focus on the case in the
K\"ahler manifolds, which usually involves K\"ahler metrics. The
calculation in this paper might be the first
step toward this direction. \\

A direct corollary of Theorem \ref{theo:main1} is the following
result, which  gives a partial converse of Theorem \ref{theo:AP} in
the special case of K\"ahler surfaces:

\begin{cor}\label{cor:main} Let $\pi: \td M(p_1, \cdots, p_n)\ri M$ be the blowing up map of
a compact K\"ahler surface $M$ at the points $\{p_i\}\subset
Zero(X)$, where  $X\in \frak h_0(M)$ is {non-degenerate} at
$\{p_i\}$. If  \beq \sum_{i=1}^n\,\langle\xi, X \rangle(p_i)\cdot
\ee_i\neq0,\label{eq0002}\eeq where $\xi$ satisfies the
normalization condition (\ref{eq:xi}) and $\ee_i>0$ are small,  then $\td M$ has no constant
scalar curvature metrics in the K\"ahler class
$$\td
\Om_{\ee}=\pi^*\Om-\sum_{i=1}^n\,\ee_i \,c_1([E_i]).$$

\end{cor}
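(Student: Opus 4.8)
The plan is to obtain Corollary \ref{cor:main} as an immediate consequence of Theorem \ref{theo:main1}, by reading the expansion formula at first order and using the classical Futaki obstruction. First I would recall that if the K\"ahler class $\td\Om_{\ee}$ on $\td M$ admitted a constant scalar curvature metric, then the classical Futaki invariant $f_{\td M}(\td\Om_{\ee}, \td X)$ would vanish for \emph{every} holomorphic vector field on $\td M$, in particular for $\td X$, the canonical lift of $X$. Since by hypothesis the base manifold $M$ carries a cscK metric in $\Om$ (this is implicit: $X \in \frak h_0(M)$ and the statement refers to the Arezzo--Pacard setting, where $\oo$ is cscK), the Futaki invariant on $M$ vanishes: $f_M(\Om, X) = 0$. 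Plugging this into the expansion formula of Theorem \ref{theo:main1} gives
$$f_{\td M}(\td\Om_{\ee}, \td X) = \sum_{i=1}^n \nu_{p_i}(\Om, X)\cdot\ee_i + O(\ee^2),$$
where $\nu_{p_i}(\Om, X) = -2(\te_X - \underline{\te_X})(p_i)$.

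Next I would translate the non-vanishing hypothesis (\ref{eq0002}) into non-vanishing of the leading term. The point is that the normalized moment map value $\langle\xi, X\rangle(p_i)$ and the normalized holomorphy potential $(\te_X - \underline{\te_X})(p_i)$ agree up to a fixed nonzero constant, since both are Hamiltonian generating functions for $X$ satisfying the zero-average normalization (\ref{eq:xi}); more precisely $\langle\xi, X\rangle = \te_X - \underline{\te_X}$ up to the convention relating the moment map to the holomorphy potential (a factor that does not depend on $i$), and by Lemma \ref{lem:potential} this quantity is independent of the choice of metric in $\Om$ and of the potential. Hence
$$\sum_{i=1}^n \nu_{p_i}(\Om, X)\cdot\ee_i = -2\sum_{i=1}^n (\te_X - \underline{\te_X})(p_i)\cdot\ee_i = (\text{const})\cdot\sum_{i=1}^n \langle\xi, X\rangle(p_i)\cdot\ee_i \neq 0$$
by (\ref{eq0002}). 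Therefore, for $\ee = (\ee_1, \dots, \ee_n)$ small enough that the $O(\ee^2)$ error is dominated by the (homogeneous degree one) leading term — here one should think of scaling $\ee_i = t b_i$ with $b_i > 0$ fixed and $t \to 0^+$, so the leading term is $t\sum_i \nu_{p_i}b_i$ and the remainder is $O(t^2)$ — we get $f_{\td M}(\td\Om_{\ee}, \td X) \neq 0$. This contradicts the vanishing of the Futaki invariant, so no cscK metric exists in $\td\Om_{\ee}$.

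The only genuinely non-routine point is the bookkeeping of constants and normalizations: one must check that the $\nu_{p_i}$ appearing in Theorem \ref{theo:main1} really is, up to a universal positive constant, the normalized moment map value $\langle\xi, X\rangle(p_i)$ appearing in Corollary \ref{cor:main}, and that the natural lift $\td X$ on $\td M$ is indeed holomorphic and still vanishes non-degenerately at the exceptional divisors in the way the expansion formula requires (this is already built into the hypotheses of Theorem \ref{theo:main1}). I would also make explicit the harmless abuse that $\ee$ "small" means small along a ray $\ee_i = tb_i$, so that "leading term dominates remainder" is literally correct; alternatively one notes the expansion is valid uniformly for $\ee$ in a cone and the degree-one part is nonzero there. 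Granting these identifications, the proof is a one-line deduction from Theorem \ref{theo:main1}, so the main "obstacle" is purely notational rather than mathematical.
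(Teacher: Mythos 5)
Your proposal is correct and follows essentially the paper's own (very short) argument: identify $\langle\xi, X\rangle$ with $\te_X-\underline{\te_X}$ (Lemma \ref{lem:potential} making the value at the $p_i$ well defined), then apply Theorem \ref{theo:main1} together with the classical Futaki obstruction, your scaling $\ee_i=tb_i$ being the right way to make the first-order dominance precise. The only quibble is that your step $f_M(\Om,X)=0$ invokes a cscK hypothesis on $M$ that the corollary neither states nor needs: if $f_M(\Om,X)\neq 0$ the expansion already gives $f_{\td M}(\td\Om_{\ee},\td X)\neq 0$ for small $\ee$, so the conclusion holds in either case without any implicit assumption.
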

 In fact, it is well-known that the moment map $\xi$ under the
 normalization condition (\ref{eq:xi})
can be characterized by $\langle\xi,
X\rangle=\te_X-\underline{\te_X}$. Therefore, Corollary \ref{cor:main} follows directly from Theorem
\ref{theo:main1}.
Corollary \ref{cor:main} gives a criterion on the non-existence of
constant scalar curvature metrics on the blown up manifold.
Moreover, the condition (\ref{eq0002}) may be related to the $\bar
K$-stability, which is introduced by Donaldson in \cite{[Do2]}\cite{[Do3]}.\\

The structure of the paper is as follows: In section \ref{sec2}, we include basic facts concerning the Futaki invariant and also outline the proof
of the localization formula of Futaki and Tian. To state our result in a clear way, we also define some local invariants on the zero locus of
a holomorphic vector field $X$. In section \ref{sec3}, we prove Theorem \ref{theo:main1} in the case when the blow up center is an isolated zero point of $X$,
and in section \ref{sec4}, we consider the case when the blow up center lies on a 1-dimensional component of the zero locus of $X$.
Note that the extension of $X$ on the blow up manifold is degenerate if and only if the blow up center $p$ is an isolated zero point of $X$ and the
linearization of $X$ at $p$ is not semisimple. This is proved in section \ref{sec3.1}. The proof of the degenerate case is the most technical part
of our paper, and occupies section \ref{sec3.3} and \ref{sec3.4}. Then in section \ref{secEg}, we apply our result to the blowup of $\CC\PP^1\times
\CC\PP^1$ at 2 or 3 points. The Futaki invariants in the former case has already been calculated by LeBrun and Simanca in \cite{[LB1]}. Our method
can also obtain a full expression for the Futaki invariant. For simplicity, we only write down the first order terms, which suffices to prove the
non-existence of cscK metrics in some K\"ahler classes. Finally in section \ref{secStoppa}, we compare our result with that of
Stoppa.\\

In a forthcoming paper, we will use a different method to get the
expansion of the Futaki invariant on compact  K\"ahler manifolds
of higher dimensions and  general holomorphic vector fields.
As an application, we will show  Corollary \ref{cor:main} for
more general cases.

\section{Preliminaries}\label{sec2}
In this section, we will recall briefly the localization
formula of  Bott \cite{[Bott1]},  Futaki \cite{[Futaki]} and Tian
 \cite{[Tian98]} for the calculation of the Futaki
 invariant.
\def\zero{{\rm Zero}}

 Let $(M,\oo_g)$ be a compact K\"ahler manifold, where $\oo_g=g_{i\bar j}dz^i\wedge d\bar z^j.$
 Here we adopt the convention that $\oo$ and $Ric(\oo)$ are defined without the usual
  ``$\sqrt{-1}$" factor.
 Let $\frak h_0(M)$ be the space of holomorphic vector fields with nonempty zero locus. For any $X\in \frak
h_0(M)$, we denote by $\zero(X)$ the zero set of $X$, which consists
of complex subvarieties $Z_{\la}(\la\in \La)$. We say $X$ is
non-degenerate on $Z_{\la}$, if $Z_{\la}$ is smooth and $\det(DX|_{TM/TZ_{\la}})$ is nowhere
zero along $Z_{\la}.$ $X$ is called non-degenerate on $M$ if $X$ is
non-degenerate on all the $Z_{\la}(\la\in \La).$

The
holomorphy potential of $X\in \frak h_0(M)$ with respect to $\oo_g$, denoted by $\theta_X$, is given by the equation
$$i_X\oo_g=-\bar \p \theta_X.$$
Such a $\te_X$ always exists and is unique up
to a constant. Note that the function $\te_X$ restricted on any
$Z_{\la}$ is a constant, and we define
$$\tr_{\Om}(X)_{Z_{\la}}=\te_X|_{Z_{\la}},$$
where $\Om=[\frac {\i}{2\pi}\oo_g]$ is the K\"ahler
class of $\oo_g$. Note that $\tr_{\Om}(X)_{Z_{\la}}$ depends on the choice of $\oo_g$ and $\theta_X$. However, we have

\begin{lem}\label{lem:potential}
  Let $\underline{\theta_X}$ be the average of $\theta_X$, then the value of $\theta_X-\underline{\theta_X}$ on a zero point of $X$ is independent of the choices of $\oo_g$ in $\Om$ and $\theta_X$.
\end{lem}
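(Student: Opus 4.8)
The plan is to separate the two sources of ambiguity in $\theta_X-\underline{\theta_X}$. The first is harmless: for a fixed K\"ahler form $\oo_g\in\Om$ it is already noted in the excerpt that $\theta_X$ is unique up to an additive constant, and adding a constant to $\theta_X$ shifts $\underline{\theta_X}$ by the same amount, so it leaves $\theta_X-\underline{\theta_X}$ --- and in particular its value at any point --- unchanged. It therefore suffices to fix $X$ and a point $p$ with $X(p)=0$, and to prove that $(\theta_X-\underline{\theta_X})(p)$ is the same for any two K\"ahler forms representing $\Om$.

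So let $\oo_0,\oo_1$ be K\"ahler forms with $[\tfrac{\i}{2\pi}\oo_0]=[\tfrac{\i}{2\pi}\oo_1]=\Om$. By the $\partial\bar\partial$-lemma I would write $\oo_1-\oo_0=\partial\bar\partial\varphi$ and interpolate by $\oo_t:=\oo_0+t\,\partial\bar\partial\varphi$, $t\in[0,1]$ (each $\oo_t$ is K\"ahler, since the K\"ahler forms in a fixed class form a convex set; in fact the computation below only uses that the $\oo_t$ are closed $(1,1)$-forms in $\Om$). Next I track the potential along this path: since $X$ is holomorphic, Cartan's formula gives
$$i_X(\partial\bar\partial\varphi)=i_X\,d(\bar\partial\varphi)=L_X(\bar\partial\varphi)=\bar\partial(X\varphi),$$
so $i_X\oo_t=-\bar\partial(\theta_X-t\,X\varphi)$, and I may take $\theta_X^{(t)}:=\theta_X-t\,X\varphi$ as the holomorphy potential of $X$ with respect to $\oo_t$ (here $\theta_X=\theta_X^{(0)}$). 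Since $X(p)=0$ we get $(X\varphi)(p)=0$, hence $\theta_X^{(t)}(p)=\theta_X(p)$ for every $t$.

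It remains to show that the average $\underline{\theta_X^{(t)}}$ is independent of $t$. As $\int_M\oo_t^n$ is cohomological, hence constant, it is enough to prove that $I(t):=\int_M\theta_X^{(t)}\,\oo_t^n$ is constant. Differentiating, and using $\tfrac{d}{dt}\oo_t^n=n\,\partial\bar\partial\varphi\wedge\oo_t^{n-1}$ together with $\tfrac{d}{dt}\theta_X^{(t)}=-X\varphi$,
$$I'(t)=-\int_M(X\varphi)\,\oo_t^n+\int_M\theta_X^{(t)}\,n\,\partial\bar\partial\varphi\wedge\oo_t^{n-1}.$$
In the second integral I would move $\partial\bar\partial$ onto $\theta_X^{(t)}$ by the self-adjointness $\int_M f\,\partial\bar\partial g\wedge\alpha=\int_M g\,\partial\bar\partial f\wedge\alpha$, valid for closed $(n-1,n-1)$-forms $\alpha$ (here $\alpha=n\,\oo_t^{n-1}$); then use $\partial\bar\partial\theta_X^{(t)}=-\partial(i_X\oo_t)=-L_X\oo_t$ (which follows from $\bar\partial\theta_X^{(t)}=-i_X\oo_t$, from $d(i_X\oo_t)=L_X\oo_t$, and from $i_X\oo_t$ being of type $(0,1)$), the identity $n\,L_X\oo_t\wedge\oo_t^{n-1}=L_X(\oo_t^n)=d(i_X\oo_t^n)$, and Stokes' theorem, to rewrite the second integral as $\int_M d\varphi\wedge i_X\oo_t^n$. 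Finally $d\varphi\wedge i_X\oo_t^n=(X\varphi)\,\oo_t^n$, since $d\varphi\wedge\oo_t^n$ has degree $2n+1$ and so vanishes, whence $0=i_X(d\varphi\wedge\oo_t^n)=(X\varphi)\,\oo_t^n-d\varphi\wedge i_X\oo_t^n$. So the second integral equals $\int_M(X\varphi)\,\oo_t^n$ and $I'(t)=0$. Therefore $\underline{\theta_X^{(t)}}$ is constant in $t$, so $(\theta_X^{(t)}-\underline{\theta_X^{(t)}})(p)=\theta_X^{(t)}(p)-\underline{\theta_X^{(t)}}=(\theta_X-\underline{\theta_X})(p)$ for all $t$; taking $t=1$ shows this value is the same for $\oo_0$ and $\oo_1$, and combined with the first paragraph the lemma follows.

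The only step that requires genuine care is the computation of $I'(t)$: the two integrations by parts and the application of Stokes' theorem must be carried out with strict attention to signs, so that the two terms of $I'(t)$ cancel exactly rather than reinforce each other. Everything else is routine.
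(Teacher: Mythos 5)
Your proposal is correct and follows essentially the same route as the paper: reduce to the ambiguity in the Kähler form, interpolate along $\oo_t=\oo_0+t\,\partial\bar\partial\varphi$ with potential $\theta_X-t\,X\varphi$ (which is unchanged at a zero of $X$), and show $\int_M(\theta_X-tX\varphi)\,\oo_t^n$ is constant in $t$ by differentiating and cancelling the two terms via the contraction identity $i_X(\partial\varphi\wedge\oo_t^n)=0$ together with integration by parts. The only difference is presentational: you keep the average unnormalized and write out the self-adjointness, Lie-derivative and Stokes steps that the paper leaves as ``integration by parts.''
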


\begin{proof}
  First we fix the K\"ahler form $\oo_g$, then $\te_X$ is unique up to adding a constant. Then obviously $\theta_X-\underline{\theta_X}$ is independent of the choice of $\te_X$. Now let's fix a $\oo_g$ and $\te_X$ with $\underline{\te_X}=0$. We change $\oo_g$ by $\oo_\phi=\oo_g+\p\b\p \phi$. Then we can choose the holomorphy potential with respect to $\oo_\phi$ to be $\te_X-X(\phi)$. Since $X(\phi)$ vanishes on any zero point of $X$, we need only to prove $\int_M (\te_X-X(\phi))\oo_\phi^n=0$.
   Let $f(t):=\int_M (\te_X-tX(\phi))\oo_{t\phi}^n$. Then $f(0)=0$ and
   $$f'(t)=n\int_M(\te_X-tX(\phi))\p\b\p \phi\wedge \oo_{t\phi}^{n-1}-\int_M X(\phi)\oo_{t\phi}^n.$$
   Observe that $0=i_X(\p\phi\wedge\oo_{t\phi}^n)=X(\phi)\oo_{t\phi}^n-nt\p\phi\wedge i_X\oo_{t\phi}\wedge \oo_{t\phi}^{n-1}$. Integrate this and using integration by parts, we get directly $f'(t)=0$, hence $f(1)=0$.
\end{proof}

The
Futaki invariant of  $\Om$ and the
holomorphic vector field $X$ is defined by \beq f_M(\Om,
X)=\Big(\frac {\i}{2\pi}\Big)^{n}\int_M\; X(h_g)\,\oo_g^n
,\label{eqfu}\eeq where $h$ is a function satisfying
$$s(g)-\un s=\Delta_gh_g.$$

Now we start with some general discussions.Let $\phi$ be
any symmetric $GL$-invariant polynomial of degree $n+1$, and $E$ a
vector bundle over $M$. Assume that $h$ is a hermitian metric on $E$
and $\te_X(h)$ be an $End(E)$-valued function satisfying \beq  \bar
\p \theta_X(h)=-i_XR(h)\label{eq100}\eeq where   $R(h)$ denotes the
curvature of $h.$ Then we can check that
$$i_X\phi(R(h)+\theta_X(h))=-\bar\p \phi(R(h)+\theta_X(h)).$$
 We define a
$(1, 0)$ form $\eta$ on $M\backslash \zero(X)$ by $\eta(Y)=g(Y, \bar
X)/g(X, \bar X)$ for any $Y\in \frak h_0(M)$ and we define a formal
series of forms by
$$\al=\phi(\theta_X(h)+R(h))\wedge \frac {\eta}{1+\bar\p \eta}.$$
Direct calculation shows that \beq \phi(\theta_X(h)+R(h))-\bar\p
\al-i_X\al=0.\label{eq006}\eeq Let $[\bb]_{k}$ denote the degree $k$
term  in  $\bb.$ By (\ref{eq006}) we have \beq
[\phi(\theta_X(h)+R(h))]_{2n}=\bar \p [\al]_{2n-1}.\label{eq007}\eeq
Let $B_{\ee}(Z_{\la})$ be an $\ee$-neighborhood of $Z_{\la}$. Using
(\ref{eq007}) and the Stokes formula, we have \beqn
\int_M\;\phi(\theta_X(h)+R(h))&=&\lim_{\ee\ri
0^+}\int_{M\backslash\cup_{\la}B_{\ee}(Z_{\la})}\,\phi(\theta_X(h)+R(h))\nonumber\\
&=&\lim_{\ee\ri
0^+}\int_{M\backslash\cup_{\la}B_{\ee}(Z_{\la})}\,\bar \p
[\al]_{2n-1} \nonumber
\\&=&-\sum_{\la\in
\La}\lim_{\ee\ri 0^+}\int_{\p
B_{\ee}(Z_{\la})}[\al]_{2n-1},\label{eq008} \eeqn where $\p
B_{\ee}(Z_{\la})$ has the induced orientation such that the last
equality holds. The following result was essentially  proved by Bott
in \cite{[Bott1]}, and the readers are referred to Theorem 5.2.8 of
\cite{[Futaki]} for the details.

\begin{lem}\label{lem:bott}(\cite{[Bott1]}\cite{[Futaki]}) If $X$ is non-degenerate on $M$, then
$$\lim_{\ee\ri 0}\,\int_{\p B_{\ee}(Z_{\la})}\;\phi(\theta_X(h)+R(h))
\cdot \frac {\eta}{1+\bar\p \eta}=-(-2\pi \i)^{\nu
}\int_{Z_{\la}}\,\frac
{\phi(\theta_X(h)+R(h))}{\det(L_{\la}(X)+K_{\la})},$$ where $\nu$ is
the complex codimension of $Z_{\la}$ in $M$, $L_{\la}(X)$ is the operator $L_{\la}(X)(Y)=(\Na_YX)^{\perp}$ for $Y\in N_{M|Z_{\la}}$, and $K_{\la}$ is the
curvature form of the induced metric on
$N_{M|Z_{\la}}$ by $g.$ \\
\end{lem}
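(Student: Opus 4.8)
The plan is to reduce everything to a local computation in a tubular neighborhood of $Z_\lambda$ and to evaluate the boundary integral over $\p B_\ee(Z_\lambda)$ by a fibration argument. First I would set up adapted local coordinates near a point of $Z_\lambda$: since $Z_\lambda$ is smooth and $X$ is non-degenerate on it, I can split the tangent bundle along $Z_\lambda$ as $TM|_{Z_\lambda}=TZ_\lambda\oplus N_{M|Z_\lambda}$ and choose holomorphic normal coordinates $(w^1,\dots,w^{n-\nu};z^1,\dots,z^\nu)$ with $Z_\lambda=\{z=0\}$, where the $z^\alpha$ parametrize the fibers of $N_{M|Z_\lambda}$. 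The non-degeneracy hypothesis says precisely that the linear operator $L_\lambda(X)$ acting on the normal directions is invertible along $Z_\lambda$, so in these coordinates $X=\sum_{\alpha,\beta} A^\alpha_\beta(w) z^\beta \p_{z^\alpha}+O(|z|^2)+(\text{tangential terms vanishing to first order in }z)$, with $A(w)=L_\lambda(X)$ invertible. The $(1,0)$-form $\eta$ is, to leading order, $\eta=\dfrac{\sum \overline{(Az)^\alpha}\, d z^\alpha}{|Az|^2}+\cdots$, which along the sphere bundle $\p B_\ee$ of radius $\ee$ scales like $\ee^{-1}$ times an angular form; the key point is that as $\ee\to 0$ the integral $\int_{\p B_\ee(Z_\lambda)}[\al]_{2n-1}$ converges and only the leading-order behavior of $\eta$ contributes.

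Next I would expand $\al=\phi(\te_X(h)+R(h))\wedge\eta/(1+\bar\p\eta)=\phi(\te_X(h)+R(h))\wedge\eta\wedge\sum_{k\ge 0}(-\bar\p\eta)^k$ and pick out the degree-$(2n-1)$ component. Since $\eta$ is a $(1,0)$-form and $\bar\p\eta$ is a $(1,1)$-form, the term $\eta\wedge(\bar\p\eta)^k$ has bidegree $(k+1,k)$; the factor $\phi(\te_X(h)+R(h))$ contributes forms of even total degree, so the relevant contribution from the curvature/potential factor has bidegree $(n-k-1,n-k)$ along the product of $Z_\lambda$ and its normal sphere. I would then push forward along the $(2\nu-1)$-dimensional sphere fibers: the fiber integral $\int_{S^{2\nu-1}_\ee}\eta\wedge(\bar\p\eta)^{\nu-1}$ is a universal quantity that one computes by direct elementary calculation (a Bochner–Martinelli–type kernel integral), giving $(-2\pi\i)^\nu/\det(A(w))$ up to the curvature correction $K_\lambda$ that appears when one allows $A$ to vary along $Z_\lambda$ and includes the connection on $N_{M|Z_\lambda}$. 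What remains after the fiber integration is exactly $-(-2\pi\i)^\nu\int_{Z_\lambda}\phi(\te_X(h)+R(h))/\det(L_\lambda(X)+K_\lambda)$, with the sign dictated by the induced orientation on $\p B_\ee(Z_\lambda)$ already fixed in \eqref{eq008}.

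The main obstacle is the fiber-integration step: one must show that the limit $\ee\to 0$ genuinely localizes, i.e. that the higher-order terms in the Taylor expansions of $X$, $g$, and $\eta$ contribute nothing in the limit, and then carry out the model computation on $\CC^\nu$ cleanly enough to identify the determinant $\det(L_\lambda(X)+K_\lambda)$ — in particular to see how the curvature $K_\lambda$ of the normal bundle enters when one globalizes the pointwise model over $Z_\lambda$. This is precisely the content of Bott's residue formula, and rather than reproduce it I would invoke \cite{[Bott1]} and Theorem 5.2.8 of \cite{[Futaki]}, where the orientation conventions and the appearance of $\det(L_\lambda(X)+K_\lambda)$ are worked out in detail; the statement of the lemma is then immediate from \eqref{eq008}.
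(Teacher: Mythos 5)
Your proposal matches the paper's treatment of this lemma: after the Stokes reduction (\ref{eq008}), the paper does not reprove the localization but simply refers to \cite{[Bott1]} and Theorem 5.2.8 of \cite{[Futaki]}, which is exactly where you also defer the fiber-integration step and the identification of $\det(L_{\la}(X)+K_{\la})$. Your sketch of the local model (invertible linearization $A(w)=L_{\la}(X)$, leading-order form of $\eta$, pushforward over the normal sphere bundle) is consistent with that reference, so nothing further is needed.
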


Now we would like to apply Lemma \ref{lem:bott} to the calculation of Futaki
invariants. Direct calculation shows that the Futaki invariant can
be expressed by \beqn &&(n+1)2^{n+1}f_M(\Om,
X)\nonumber\\&=&\sum_{j=0}^n(-1)^j\frac 1{j!(n-j)!}\Big(\frac
{\i}{2\pi}\Big)^{n}\int_M\;\Big((-\Delta_g\te_X+Ric(g)+(n-2j)(\te_X+\oo_g)
)^{n+1}\nonumber\\&&- (\Delta_g\te_X-Ric(g)+(n-2j)(\te_X+\oo_g)
)^{n+1}\Big)- n2^{n+1}\mu\cdot\Big(\frac
{\i}{2\pi}\Big)^{n}\int_M\;(\te_X+\oo_g)^{n+1},\nonumber\\
\label{eqfutaki}\eeqn where $\mu=\frac {c_1(M)\cdot
\Om^{n-1}}{\Om^n}.$ In the following, we want to choose the
polynomial $\phi$ and the vector bundle $E\ri M$ such that
(\ref{eqfutaki}) can be simplified by Lemma \ref{lem:bott}. We assume without loss of
generality that $\Om=c_1(L)$ for a holomorphic line bundle $L\ri M$. If we
choose \beq \phi=\frac 1{(n+1)!}\tr(x_1x_2\cdots x_{n+1}), \quad
E_1^j=K_M^{-1}\otimes L^{n-2j}.\label{eq022}\eeq then we have
$$R(h)=Ric(g)+(n-2j)\oo_g,\quad \theta_X(h)=-\Delta \te_X+(n-2j)\te_X,$$
where $R(h)$ is the curvature of the Hermitian metric on $E_1^j$ and
$\te_X(h)$ is determined by (\ref{eq100}). Therefore, we have \beqn
&&
\int_M\;\Big(-\Delta_g\te_X+Ric(g)+(n-2j)(\te_X+\oo_g) \Big)^{n+1}\nonumber\\
&=&-\sum_{\la\in \La}\lim_{\ee\ri 0^+}\int_{\p
B_{\ee}(Z_{\la})}\Big(-\Delta_g\te_X+Ric(g)+(n-2j)(\te_X+\oo_g)
\Big)^{n+1}\wedge \sum_{k=0}^{n-1}(-1)^k\eta\wedge (\bar
\p\eta)^{k}.\nonumber\\\label{eq023} \eeqn Similarly, if we choose
$E_2^j=K_M\otimes L^{n-2j}$ in (\ref{eq022}), we have
$$R(h)=-Ric(g)+(n-2j)\oo_g,\quad \theta_X(h)=\Delta \te_X+(n-2j)\te_X.$$
Combining this with (\ref{eq008}), we have \beqn &&
\int_M\;\Big(\Delta_g\te_X-Ric(g)+(n-2j)(\te_X+\oo_g) \Big)^{n+1}\nonumber\\
&=&-\sum_{\la\in \La}\lim_{\ee\ri 0^+}\int_{\p
B_{\ee}(Z_{\la})}\Big(\Delta_g\te_X-Ric(g)+(n-2j)(\te_X+\oo_g)
\Big)^{n+1}\wedge \sum_{k=0}^{n-1}(-1)^k\eta\wedge (\bar
\p\eta)^{k}. \nonumber\\\label{eq024}\eeqn For the last term of
(\ref{eqfutaki}), we choose $E=L^{n+1-2k}$ and do the same
calculation as above, \beq \int_M\; (\te_X+\oo_g)^{n+1}
=-\sum_{\la\in \La}\lim_{\ee\ri 0^+}\int_{\p
B_{\ee}(Z_{\la})}(\te_X+\oo_g)^{n+1}\wedge
\sum_{k=0}^{n-1}(-1)^k\eta\wedge (\bar \p\eta)^{k}.
\label{eq025}\eeq Combining the equalities
(\ref{eqfutaki})-(\ref{eq025}), we have \beqs  f_M(\Om,
X)&=&\sum_{\la\in \La} \Big(  I_{Z_{\la}}(\Om, X)-\frac n{n+1}\mu
 J_{Z_{\la}}(\Om, X)\Big), \eeqs where $I_{Z_{\la}}(\Om, X)$ and
$J_{Z_{\la}}(\Om, X)$ are defined by \beqn &&   I_{Z_{\la}}(\Om,
X)\nonumber\\&=& \frac 1{(n+1)2^{n+1}}\sum_{j=0}^n(-1)^j\frac
1{j!(n-j)!}\Big(\frac {\i}{2\pi}\Big)^{n}\Big(- \lim_{\ee\ri
0^+}\int_{\p B_{\ee}(Z_{\la})}\Big(-\Delta_g\te_X\nonumber\\
&&+Ric(g)+(n-2j)(\te_X+\oo_g) \Big)^{n+1}\wedge
\sum_{k=0}^{n-1}(-1)^k\eta\wedge (\bar \p\eta)^{k}\nonumber\\
&&+\lim_{\ee\ri 0^+}\int_{\p
B_{\ee}(Z_{\la})}\Big(\Delta_g\te_X-Ric(g)+(n-2j)(\te_X+\oo_g)
\Big)^{n+1}\wedge \sum_{k=0}^{n-1}(-1)^k\eta\wedge (\bar
\p\eta)^{k}\Big)\nonumber \\ \label{eq009}\eeqn and \beq
J_{Z_{\la}}(\Om, X)=- \lim_{\ee\ri 0^+}\Big(\frac
{\i}{2\pi}\Big)^{n}\int_{\p
B_{\ee}(Z_{\la})}(\te_X+\oo_g)^{n+1}\wedge
\sum_{k=0}^{n-1}(-1)^k\eta\wedge (\bar \p\eta)^{k}.\label{eq013}\eeq
Note that using the identities
$$\sum_{j=0}^n\;(-1)^j\binom{n}{j}(n-2j)^k=\left\{
                                             \begin{array}{ll}
                                               0, & \hbox{if}\; k<n \;\hbox{or } \;k=n+1; \\
                                               2^nn!, & \hbox{if}\;
k=n,
                                             \end{array}
                                           \right.
$$  the equality (\ref{eq009}) can be simplified to \beq
  I_{Z_{\la}}(\Om, X)=- \lim_{\ee\ri 0}\Big(\frac
{\sqrt{-1}}{2\pi}\Big)^n\,\int_{\p
B_{\ee}(Z_{\la})}\,(-\Delta_g\te_X+Ric(g))(\te_X+\oo_g)^n\wedge
\sum_{k=0}^{n-1}(-1)^k\eta\wedge (\bar \p\eta)^{k}.
\label{eq016}\eeq\\

Simple calculation shows that $-\Delta_g\te_X(Z_\la)=\tr(L_{\la}(X))$. Applying Lemma \ref{lem:bott} to (\ref{eq013})-(\ref{eq016}), we
have the following result:

\begin{theo}\label{lem:futaki}(\cite{[Futaki]}, \cite{[Tian98]}) For a K\"ahler class $\Om$ and non-degenerate $X\in \frak
h_0(M)$, the Futaki invariant is given by
$$f_M(\Om, X)=\sum_{\la\in \La}\, \Big(I_{Z_{\la}}(\Om,
X)-\frac n{n+1}\mu J_{Z_{\la}}(\Om, X))\Big),$$ where  $\mu=\frac
{c_1(M)\cdot \Om^{n-1}}{\Om^n}$ and  \beqn I_{Z_{\la}}(\Om,
X)&=&\int_{Z_{\la}}\, \frac { (\tr(L_{\la}(X))+c_1(M)
)(\tr_{\Om}(X)_{Z_{\la}}+\Om)^n}{\det(L_{\la}(X)+\frac {\i}{2\pi}K_{\la})},\label{eq:I}\\
J_{Z_{\la}}(\Om, X)&=&\int_{Z_{\la}}\, \frac {(
\tr_{\Om}(X)+\Om)^{n+1}}{\det(L_{\la}(X)+\frac {\i}{2\pi}K_{\la})}.
\label{eq:J}\eeqn

\end{theo}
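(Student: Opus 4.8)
The plan is to derive (\ref{eq:I})--(\ref{eq:J}), and hence the asserted formula, by inserting the two boundary-integral expressions (\ref{eq013}) and (\ref{eq016}) into Bott's residue formula, Lemma~\ref{lem:bott}. What makes this work is that the mixed-degree ``coefficient form'' inside each of those integrals is exactly of the form $\phi(\theta_X(h)+R(h))$ for a suitable equivariant Hermitian bundle $(E,h)$ and an invariant polynomial $\phi$ of degree $n+1$ --- the very data fed to Lemma~\ref{lem:bott} --- and one may take $\phi=\det$ in both cases. For the $J$-term I would use $E=L^{\oplus(n+1)}$ with the direct sum metric, so that $\theta_X(h)+R(h)=(\theta_X+\oo_g)\,\mathrm{Id}$ and $\phi(\theta_X(h)+R(h))=(\theta_X+\oo_g)^{n+1}$. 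For the $I$-term I would use $E=K_M^{-1}\oplus L^{\oplus n}$ with the direct sum metric; by the equivariant curvature identities already used before (\ref{eq022}) (namely $R(L)=\oo_g$, $\theta_X(L)=\theta_X$ and $R(K_M^{-1})=Ric(g)$, $\theta_X(K_M^{-1})=-\Delta_g\theta_X$) one gets $\theta_X(h)+R(h)=\diag\bigl(Ric(g)-\Delta_g\theta_X,\ \theta_X+\oo_g,\dots,\theta_X+\oo_g\bigr)$, whose determinant is $(-\Delta_g\theta_X+Ric(g))(\theta_X+\oo_g)^n$, precisely the coefficient form in (\ref{eq016}). (Here $\eta/(1+\bar\partial\eta)$ in Lemma~\ref{lem:bott} is read as the finite sum $\sum_{k=0}^{n-1}(-1)^k\eta\wedge(\bar\partial\eta)^k$ appearing in (\ref{eq013}) and (\ref{eq016}).)

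Applying Lemma~\ref{lem:bott}, with $\nu$ the complex codimension of $Z_\la$ in $M$, then turns each $\lim_{\ee\to0}\int_{\partial B_\ee(Z_\la)}$ into $-(-2\pi\i)^\nu\int_{Z_\la}\phi(\theta_X(h)+R(h))/\det(L_\la(X)+K_\la)$. It remains to pass to cohomology. I would: (i) restrict to $Z_\la$, using that $\theta_X|_{Z_\la}$ is the constant $\tr_\Om(X)_{Z_\la}$ and that $-\Delta_g\theta_X|_{Z_\la}=\tr(L_\la(X))$ (as noted just before the statement); (ii) observe that $\int_{Z_\la}$ keeps only the component of real degree $2(n-\nu)=2\dim_\CC Z_\la$, so that --- after expanding $\det(L_\la(X)+K_\la)^{-1}$ as a finite power series, which is licit since $X$ is non-degenerate on $Z_\la$ --- every surviving monomial carries exactly $n-\nu$ of the two-forms $\oo_g$, $Ric(g)$, $K_\la$; and (iii) carry out the normalization bookkeeping: the prefactor $(\tfrac{\i}{2\pi})^n$ coming from (\ref{eqfu})--(\ref{eqfutaki}) and the $(-2\pi\i)^\nu$ coming from Lemma~\ref{lem:bott} satisfy the identity $(\tfrac{\i}{2\pi})^n(-2\pi\i)^\nu=(\tfrac{\i}{2\pi})^{n-\nu}$, which distributes exactly one factor $\tfrac{\i}{2\pi}$ to each of the $n-\nu$ surviving two-forms, so $\oo_g$, $Ric(g)$, $K_\la$ become $\Om$, $c_1(M)$, $\tfrac{\i}{2\pi}K_\la$ respectively while the scalar terms $\theta_X$ and $-\Delta_g\theta_X$ stay as they are. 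This converts the $I$- and $J$-integrals into (\ref{eq:I}) and (\ref{eq:J}); plugging these into $f_M(\Om,X)=\sum_\la\bigl(I_{Z_\la}(\Om,X)-\tfrac{n}{n+1}\mu\,J_{Z_\la}(\Om,X)\bigr)$, which was established above, finishes the proof.

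I expect the genuine work to be concentrated in step (iii): one must track the factors of $\i$ and $2\pi$ arising from three independent sources --- the definition of $f_M$, Bott's formula, and the passage from $(\oo_g,Ric(g),K_\la)$ to $(\Om,c_1(M),\tfrac{\i}{2\pi}K_\la)$ --- and check that they cancel to leave the stated formulas with no extra constant, being careful that the determinants in the denominators are treated uniformly as formal power series so that only the total-degree-$2(n-\nu)$ part is retained. Everything else is the purely formal identification of the two coefficient forms with the determinant of an equivariant curvature, which is automatic once the auxiliary bundles $L^{\oplus(n+1)}$ and $K_M^{-1}\oplus L^{\oplus n}$ are in hand.
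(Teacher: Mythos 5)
Your proposal is correct and follows essentially the same route as the paper: it starts from the boundary-integral decomposition $f_M=\sum_\la\big(I_{Z_\la}-\tfrac{n}{n+1}\mu J_{Z_\la}\big)$ with $I,J$ given by (\ref{eq016}) and (\ref{eq013}), applies Lemma \ref{lem:bott} together with $-\Delta_g\te_X|_{Z_\la}=\tr(L_\la(X))$, and checks the normalization $(\tfrac{\i}{2\pi})^n(-2\pi\i)^\nu=(\tfrac{\i}{2\pi})^{n-\nu}$, which is exactly what the paper does. Your realization of the integrands as $\det$ of the equivariant curvature of $L^{\oplus(n+1)}$ and $K_M^{-1}\oplus L^{\oplus n}$ is just an explicit (and correct) way of justifying the step the paper states tersely as ``Applying Lemma \ref{lem:bott} to (\ref{eq013})--(\ref{eq016})''.
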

Theorem \ref{lem:futaki} was  first proved by Futaki in
\cite{[Futaki]} for the first Chern class and by  Tian in
\cite{[Tian98]} for a general K\"ahler class.  For simplicity,  we
introduce the following notations:
\begin{defi}\label{defilocal}  For a K\"ahler class $\Om$ and $X\in \frak h_0(M)$ with $Zero(X)=\cup_\la Z_\la$,
the local Futaki invariant of $(\Om, X)$ on $Z_{\la}$ is defined by
$$f_{Z_{\la}}(\Om, X)= I_{Z_{\la}}(\Om,
X)-\frac n{n+1}\mu J_{Z_{\la}}(\Om, X)  ,$$ where $I_{Z_{\la}}(\Om,
X)$ and $J_{Z_{\la}}(\Om, X)$ are given by (\ref{eq:I}) and
(\ref{eq:J}) respectively when $X$ is nondegenerate on $Z_\la$, and by (\ref{eq016}) and (\ref{eq013}) in the general case. Moreover, we define \beq J_{M}(\Om,
X)=\sum_{\la\in \La}\;J_{Z_{\la}}(\Om, X)=\Big(\frac {\i}{2\pi}\Big)^n\int_M (\theta_X+\oo_g)^{n+1}=(n+1)\int_M \theta_X (\frac {\i}{2\pi}\oo_g)^n.\label{eq:JM}
\eeq\\

\end{defi}

 When $M$ has complex
dimension $2$, we can simplify the formula in Theorem
\ref{lem:futaki} as follows. Write the set of indices $\La=\La_0\cup
\La_1$ where $\La_i$ consists of all $\la$ with
$\dim_{\CC}Z_{\la}=i(i=0, 1)$ and we set
$$A_{\la}:=\tr(L_{\la}(X)),\quad  B_{\la}:=\tr_{\Om}(X)_{Z_{\la}},
\quad C_{\la}=\det(L_{\la}(X)). $$

The following result is a direct
corollary of Lemma \ref{lem:futaki}:

\begin{cor}\label{corfutaki}(\cite{[Tian98]}, \cite{[Tianbook]})Suppose that $\dim_{\CC}M=2$. If
$X\in \frak h_0(M)$ is non-degenerate on the zero set $Z_{\la}$, we
have \beq f_{Z_{\la}}(\Om, X)= \frac {A_{\la}B_{\la}^2- \frac
{2\mu}{3}B_{\la}^{3}}{C_{\la}} \label{eq002}\eeq for the case
$\la\in \La_0$, and \beqn f_{Z_{\la}}(\Om, X)&=& (2B_{\la}-2\mu
B_{\la}^2A_{\la}^{-1})\Om([Z_{\la}])\nonumber\\&&+(\frac
{2\mu}{3}A_{\la}^{-2}B_{\la}^3)c_1(M)([Z_{\la}])+(A_{\la}^{-1}B_{\la}^2-\frac
{2\mu}{3}A_{\la}^{-2}B_{\la}^3)(2-2g(Z_{\la}))\label{eq004}\eeqn for
$\la\in \La_1$,  where $g(Z_{\la})$ denotes the genus of $Z_{\la}.$
\end{cor}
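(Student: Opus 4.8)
The plan is to derive Corollary \ref{corfutaki} directly from Theorem \ref{lem:futaki} by specializing the general formulas (\ref{eq:I}) and (\ref{eq:J}) to complex dimension $n=2$ and evaluating the resulting integrals over the zero components $Z_\la$, which are either points (when $\la\in\La_0$) or smooth curves (when $\la\in\La_1$). In the isolated case $Z_\la=\{p\}$, the normal bundle is all of $T_pM$, there is no curvature term $K_\la$ to integrate against, $L_\la(X)$ is just the linearization $DX(p)$ acting on $T_pM$, and the "integral over $Z_\la$" is simply evaluation at the point. Since $\Om$ and $c_1(M)$ are $2$-forms, they contribute nothing at a point, so in $(\tr_\Om(X)_{Z_\la}+\Om)^n=(B_\la+\Om)^2$ only the scalar term $B_\la^2$ survives, and likewise $\tr(L_\la(X))+c_1(M)$ reduces to $A_\la$ and $(\tr_\Om(X)+\Om)^{n+1}=(B_\la+\Om)^3$ reduces to $B_\la^3$. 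Dividing by $\det(L_\la(X))=C_\la$ and assembling $I_{Z_\la}-\tfrac{n}{n+1}\mu J_{Z_\la}=I_{Z_\la}-\tfrac23\mu J_{Z_\la}$ yields (\ref{eq002}) immediately.

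For the $1$-dimensional case, $Z_\la$ is a smooth compact curve, the normal bundle $N_{M|Z_\la}$ is a line bundle, $L_\la(X)$ is multiplication by the nonvanishing scalar-valued function $A_\la$ (its "trace" coincides with $L_\la(X)$ itself since the bundle has rank one, and it is constant along $Z_\la$ by holomorphy), and $K_\la$ is the curvature $2$-form of the induced metric on $N_{M|Z_\la}$, so that $\tfrac{\i}{2\pi}\int_{Z_\la}K_\la=c_1(N_{M|Z_\la})([Z_\la])$. The key computation is to expand the denominator $\det(L_\la(X)+\tfrac{\i}{2\pi}K_\la)^{-1}=(A_\la+\tfrac{\i}{2\pi}K_\la)^{-1}=A_\la^{-1}\bigl(1-A_\la^{-1}\tfrac{\i}{2\pi}K_\la+\cdots\bigr)$ as a formal power series and keep only the degree-$0$ and degree-$2$ terms, since $Z_\la$ is a real $2$-manifold. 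Multiplying out $(A_\la+c_1(M))(B_\la+\Om)^2$ and $(B_\la+\Om)^3$ against this expansion, integrating the degree-$2$ pieces over $Z_\la$, and using $\Om([Z_\la])$, $c_1(M)([Z_\la])$, $c_1(N_{M|Z_\la})([Z_\la])$ together with the adjunction formula $c_1(N_{M|Z_\la})([Z_\la])+c_1(M)([Z_\la]) \cdot(\text{correction}) $— more precisely, $\deg N_{M|Z_\la}=c_1(M)([Z_\la])-(2-2g(Z_\la))$ by adjunction on a surface — gives a clean expression in terms of $\Om([Z_\la])$, $c_1(M)([Z_\la])$ and $2-2g(Z_\la)$. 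Collecting the coefficients of $I_{Z_\la}-\tfrac23\mu J_{Z_\la}$ then produces (\ref{eq004}).

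The main obstacle I expect is bookkeeping rather than conceptual difficulty: one must carefully track which monomials in the expansions of $(B_\la+\Om)^2$, $(B_\la+\Om)^3$, $(A_\la+c_1(M))$ and $(A_\la+\tfrac{\i}{2\pi}K_\la)^{-1}$ are of pure degree $2$ on the curve $Z_\la$, handle the sign and normalization conventions (the paper works without the $\sqrt{-1}$ factor in $\oo$ and $Ric$, and inserts $\tfrac{\i}{2\pi}$ explicitly), and correctly invoke adjunction to trade $c_1(N_{M|Z_\la})([Z_\la])$ for $c_1(M)([Z_\la])-(2-2g(Z_\la))$ so that the Euler characteristic term $2-2g(Z_\la)$ appears with the coefficient shown. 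Once the adjunction substitution is made and like terms are grouped, the formula (\ref{eq004}) falls out. Since everything is an algebraic manipulation of Theorem \ref{lem:futaki}, no new analytic input is needed, and the same strategy would generalize to higher dimensions at the cost of longer combinatorial expansions.
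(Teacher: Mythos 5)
Your proposal is correct and follows essentially the same route the paper intends: specialize the localization formula of Theorem \ref{lem:futaki} to $n=2$, keep only the degree-$0$ (point case) or degree-$2$ (curve case) terms after expanding $\det(L_\la(X)+\tfrac{\i}{2\pi}K_\la)^{-1}$, and use adjunction $c_1(N_{M|Z_\la})([Z_\la])=c_1(M)([Z_\la])-(2-2g(Z_\la))$ to reach (\ref{eq002}) and (\ref{eq004}); indeed your intermediate expressions for $I_{Z_\la}$ and $J_{Z_\la}$ agree with those recorded in the paper's remark following the corollary. The paper itself only cites Tian's lecture notes for the details, and your bookkeeping reproduces exactly those details.
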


Corollary  \ref{corfutaki}  is given by \cite{[Tian98]} and the
details of the proof is given by \cite{[Tianbook]}.

\begin{rem}
  We will also use the expression of $I_{Z_\la}(\Om,X)$ and $J_{Z_\la}(\Om,X)$ when $\dim_{\CC}Z_\la=0,1$ in later sections. So we write them down here:
  \begin{itemize}
    \item When $\dim_{\CC}Z_\la=0$, we have
    $$I_{Z_\la}(\Om,X)=\frac{A_\la B_\la^2}{C_\la}, \quad J_{Z_\la}(\Om,X)=\frac{ B_\la^3}{C_\la}.$$
    \item When $\dim_{\CC}Z_\la=1$, we have
    \beqs
    I_{Z_\la}(\Om,X)&=&2B_\la \Om([Z_\la])+A_\la^{-1}B_\la^2(2-2g(Z_\la)),\\
    J_{Z_\la}(\Om,X)&=&3A_\la^{-1}B_\la^2 \Om([Z_\la])-A_\la^{-2}B_\la^3\Big( c_1(M)([Z_\la])+2g(Z_\la)-2 \Big).
    \eeqs
  \end{itemize}
\end{rem}

\section{Blowing up at isolated zeros}\label{sec3}
 In this section we will calculate the Futaki invariant of the
blow up $\pi: \td M\ri M$ of a K\"ahler surface $M$ at an isolated
zero point $p\in M$ of $X$ with the exceptional divisor
$\pi^{-1}(p)=E.$ In this case, $X$ can be naturally extended to a holomorphic vector
field $\td X$ on $\td M$. We would like to calculate the
Futaki invariant of $(\td \Om_{\ee}, \td X)$ on $\td M$ where $\td
\Om_{\ee}=\pi^*\Om-\ee c_1([E])$.\\

Now we study the zero set of $\td X$ on the blown up $\td M.$ The
zero set $\zero(\td X)=\cup_{\la\in \td \La}\td Z_{\la}$ of $\td X$
on $\td M$ can be divided into two types: one coincides with the
zero set of $X$ on $M$ and we denote the set of the indices by $
\La$. The other belongs to the exceptional divisor $E$ and we denote
the set of the indices by $ \Up.$ Thus, the indices of the zeros
sets $\td Z_{\la}$ has the decomposition $\td \La=\La\cup \Up$. Set
$$\td \mu=\frac {c_1(\td
M)\cdot \td \Om_{\ee}}{\td \Om_{\ee}^2},\quad \dd:=\td
\mu-\mu=-\frac 1{\Om^2}\ee+O(\ee^2).$$ With these notations, we have

\begin{lem}\label{lemblowup}
$$f_{\td M}(\td \Om_{\ee}, \td X)=f_{M}(  \Om ,   X)-\frac n{n+1}\dd J_{M}(  \Om ,   X)
+\sum_{\la\in \Up}f_{\td Z_{\la}}(\td \Om_{\ee}, \td X)+\frac
n{n+1}\dd J_p(  \Om ,   X)-f_p(  \Om ,   X),$$ where $J_{M}(  \Om ,
X)$ is defined by (\ref{eq:JM}).

\end{lem}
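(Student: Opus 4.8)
The plan is to compare the two localization formulas for $f_{\td M}(\td\Om_\ee,\td X)$ and $f_M(\Om,X)$ term by term over the zero loci, exploiting the decomposition $\td\La=\La\cup\Up$. By Definition \ref{defilocal} applied on $\td M$ with the constant $\td\mu$ in place of $\mu$,
$$
f_{\td M}(\td\Om_\ee,\td X)=\sum_{\la\in\La}f_{\td Z_\la}^{\,\td\mu}(\td\Om_\ee,\td X)+\sum_{\la\in\Up}f_{\td Z_\la}(\td\Om_\ee,\td X),
$$
where I write $f_Z^{\,\td\mu}=I_Z-\tfrac n{n+1}\td\mu\,J_Z$ to emphasize the dependence on the coefficient. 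The zero loci $\td Z_\la$ for $\la\in\La$ are exactly the $Z_\la\subset M$ that avoid $p$: under $\pi$ they are biholomorphic to their images, the normal bundle $N_{\td M|\td Z_\la}$, the induced metric, and the operator $L_\la(\td X)$ all pull back from $M$ (since $\pi$ is an isomorphism near these loci), and $\pi^*\td\Om_\ee$ restricted to a neighborhood of $\td Z_\la$ agrees with $\Om$ because $c_1([E])$ is supported on $E$. Hence $I_{\td Z_\la}(\td\Om_\ee,\td X)=I_{Z_\la}(\Om,X)$ and $J_{\td Z_\la}(\td\Om_\ee,\td X)=J_{Z_\la}(\Om,X)$ for every $\la\in\La$ (this uses that the formulas in Theorem \ref{lem:futaki}, or \eqref{eq016}–\eqref{eq013} in the degenerate case, are local near each $Z_\la$).

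Next I account for the change of the constant. Writing $\td\mu=\mu+\dd$,
$$
f_{\td Z_\la}^{\,\td\mu}(\td\Om_\ee,\td X)=I_{Z_\la}(\Om,X)-\frac n{n+1}\mu J_{Z_\la}(\Om,X)-\frac n{n+1}\dd\,J_{Z_\la}(\Om,X)=f_{Z_\la}(\Om,X)-\frac n{n+1}\dd\,J_{Z_\la}(\Om,X)
$$
for $\la\in\La$. Summing over $\la\in\La$ and using $\sum_{\la\in\La}f_{Z_\la}(\Om,X)=f_M(\Om,X)$ together with $\sum_{\la\in\La}J_{Z_\la}(\Om,X)=J_M(\Om,X)-J_p(\Om,X)$ — the latter because $J_M(\Om,X)=\sum_{\la\in\td\La_{\mathrm{old}}}J_{Z_\la}(\Om,X)$ runs over all zero components of $X$ on $M$, including the isolated point $p$, by \eqref{eq:JM} — we get
$$
\sum_{\la\in\La}f_{\td Z_\la}^{\,\td\mu}(\td\Om_\ee,\td X)=f_M(\Om,X)-\frac n{n+1}\dd\bigl(J_M(\Om,X)-J_p(\Om,X)\bigr)-f_p(\Om,X).
$$
Here $f_p(\Om,X)=f_{Z_{\la_0}}(\Om,X)$ is precisely the missing local contribution at $p$ (with $\la_0$ the index of $\{p\}$ in $\La$ for the manifold $M$): on $M$ we have $\sum_{\la}f_{Z_\la}(\Om,X)=f_M(\Om,X)$ over all components including $p$, so removing $p$ from the $\td M$-sum is the same as subtracting $f_p(\Om,X)$ and adding it back where needed. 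Adding the $\Up$-sum gives exactly the claimed identity.

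The main obstacle is the careful bookkeeping of which zero components carry which coefficient and making sure no term is double-counted: one must verify that $J_M(\Om,X)$ as defined in \eqref{eq:JM} includes the isolated point $p$ (so that $\sum_{\la\in\La}J_{Z_\la}=J_M-J_p$), and that the $I$- and $J$-contributions of the untouched components $\td Z_\la$, $\la\in\La$, are genuinely unchanged under $\pi$. The second point is the geometrically substantive one; it is immediate once one notes that all the ingredients in \eqref{eq008}–\eqref{eq016} — the form $\eta$, the curvature $R(h)$, the potential $\theta_{\td X}(h)$, and the cycle $\p B_\ee(Z_\la)$ — live in a neighborhood of $Z_\la$ disjoint from $E$, where $\pi$ is a biholomorphism compatible with all the relevant metrics and the identification $\pi^*\td\Om_\ee|_{\text{nbhd}}=\Om|_{\text{nbhd}}$. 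Everything else is linear algebra in the definition $f_Z^{\,\td\mu}=I_Z-\tfrac n{n+1}\td\mu J_Z$ and the splitting $\td\mu=\mu+\dd$.
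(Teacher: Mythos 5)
Your argument is correct and follows essentially the same route as the paper: split the zero set of $\td X$ into the components indexed by $\La$ (disjoint from $E$, where $I$ and $J$ are unchanged by locality) and by $\Up$, write $\td\mu=\mu+\dd$, and use $f_M(\Om,X)=\sum_{\la\in\La}f_{Z_\la}(\Om,X)+f_p(\Om,X)$ together with $J_M(\Om,X)=\sum_{\la\in\La}J_{Z_\la}(\Om,X)+J_p(\Om,X)$. Only note the small notational slip where you first write $\sum_{\la\in\La}f_{Z_\la}(\Om,X)=f_M(\Om,X)$ with $\La$ excluding $p$; your later clarification fixes this and the final bookkeeping agrees with the paper.
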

\begin{proof}The Futaki invariant of $(\td \Om_{\ee}, \td X)$ on $\td
M$ is given by
$$f_{\td M}(\td \Om_{\ee}, \td X)=\sum_{\la\in \La}f_{\td Z_{\la}}(\td \Om_{\ee}, \td X)
+\sum_{\la\in \Up}f_{\td Z_{\la}}(\td \Om_{\ee}, \td X).$$ Note that
for any $\la\in \La$ we have \beqs f_{\td Z_{\la}}(\td \Om_{\ee},
\td X)&=&I_{\td Z_{\la}}(\td \Om_{\ee}, \td X)-\frac n{n+1}\td \mu
J_{\td Z_{\la}}(\td \Om_{\ee}, \td X)\\
&=&I_{  Z_{\la}}(  \Om ,   X)-\frac n{n+1}(\mu+\dd) J_{ Z_{\la}}(
\Om,   X)\\
&=&f_{  Z_{\la}}(  \Om ,   X)-\frac n{n+1}\dd J_{ Z_{\la}}( \Om,
X),\eeqs where we used the fact that $I_{\td Z_{\la}}(\td \Om_{\ee},
\td X)=I_{  Z_{\la}}(  \Om ,   X)$ and $J_{\td Z_{\la}}(\td
\Om_{\ee}, \td X)=J_{  Z_{\la}}(  \Om ,   X)$ since $\td Z_{\la} $
and $E$ are disjoint for $\la\in \La$.  Note that
$$f_{M}(  \Om ,   X)=\sum_{\la\in \La}f_{  Z_{\la}}(  \Om ,   X)+f_p(  \Om ,   X).$$
The lemma follows from the above equalities.
\end{proof}

\subsection{The zero set of the holomorphic vector field $\td X$}\label{sec3.1}
In this subsection, we will calculate the zero locus of the holomorphic
vector field $\td X$ on  $\td M$.

   Let $p\in M$ be an isolated zero
point of $X$ and $U$ be a neighborhood of $p$ with coordinates $(z,
w)$. Near the point $p$ the vector field $X$ can be written as \beq
X=X^1(z, w)\pd {}{z}+X^2(z, w)\pd {}{w},\label{eqX}\eeq where
$X^1(z, w)$ and $ X^2(z, w)$ are holomorphic functions on $U$.
We assume that  the functions $X^1$ and $X^2$ can be expanded on
$U$ near $p\in M$ as \beqn X^1(z, w)&=&a_1z+b_1w+\sum_{i+j\geq
2}c_{ij}z^iw^j,\label{eqX1}\\X^2(z, w)&=&a_2z+b_2w+\sum_{i+j\geq
2}d_{ij}z^iw^j,\label{eqX2}
 \eeqn where $a_i, b_i, c_{ij}$ and $d_{ij}$ are constants.  By our non-degenerate assumption, the
 matrix \[ \left( \begin{array}{ccc}
a_1 & b_1  \\
a_2 & b_2  \end{array} \right)\] is non-singular.

Consider the blowing up map $\pi: \td M\ri M$ at the point $p.$

\begin{lem}\label{lemzero}Let $p$ be a non-degenerate isolated zero point of $X$, where
$X$ is locally given by (\ref{eqX1})-(\ref{eqX2}). Then $\td X$ is non-degenerate if and only if
the matrix \[ \left( \begin{array}{ccc}
a_1 & b_1  \\
a_2 & b_2  \end{array} \right)\] is semisimple(i.e. diagonalizable.).

\end{lem}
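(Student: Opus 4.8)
\medskip
\noindent\textbf{Proof proposal.}\quad The plan is to lift $X$ explicitly to the blowup over $p$, read off the zero set of $\td X$ on the exceptional divisor $E=\pi^{-1}(p)$ together with the linearization of $\td X$ at each such zero, and compare with the notion of non-degeneracy recalled in Section~\ref{sec2}. Since $\td X$ coincides with $X$ on $\td M\setminus E\cong M\setminus\{p\}$, where $X$ is non-degenerate by hypothesis, $\td X$ is non-degenerate on $\td M$ if and only if it is non-degenerate along $\mathrm{Zero}(\td X)\cap E$, so the whole argument is local near $p$. Write $A=\begin{pmatrix} a_1&b_1\\ a_2&b_2\end{pmatrix}$ for the invertible linear part of $X$ at $p$ coming from (\ref{eqX1})--(\ref{eqX2}).

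First I would cover $E$ by the two standard charts $w=zt$, coordinates $(z,t)$, and $z=sw$, coordinates $(s,w)$, with $E=\{z=0\}$ and $E=\{w=0\}$ respectively. Substituting into (\ref{eqX1})--(\ref{eqX2}) gives, in the first chart, with $Q_0(t):=a_2+(b_2-a_1)t-b_1t^2$,
\[
\td X=z\,(a_1+b_1t+O(z))\,\p_z+(Q_0(t)+O(z))\,\p_t,
\]
and the symmetric expression in the second chart with $b_1+(a_1-b_2)s-a_2s^2$ in place of the $\p_s$-coefficient. Hence $\mathrm{Zero}(\td X)\cap E$ consists exactly of the roots of $Q_0$ (together with the point $t=\infty$, visible in the other chart), which are precisely the eigenlines of $A$ inside $\PP(T_pM)\cong\CC\PP^1$; indeed $\td X|_E$ is the vector field on $\PP(T_pM)$ induced by the linear flow of $A$, whose zeros are its fixed points.

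The key step is to linearize $\td X$ at such a zero $(0,t_0)\in E$. Because the $\p_z$-component is divisible by $z$, the Jacobian of $\td X$ at $(0,t_0)$ is lower triangular with diagonal entries $a_1+b_1t_0$ and $Q_0'(t_0)$, so its determinant---which is exactly the quantity $\det(D\td X|_{T_{(0,t_0)}\td M})$ controlling non-degeneracy at an isolated zero---equals $(a_1+b_1t_0)\,Q_0'(t_0)$. If $(1,t_0)$ is an eigenvector of $A$ with eigenvalue $\mu$ then $a_1+b_1t_0=\mu\ne0$, while a short computation gives $Q_0'(t_0)=b_1(t_1-t_0)=\mu'-\mu$, where $t_1$ is the other root of $Q_0$ and $\mu'$ the other eigenvalue of $A$ (invariantly, the linearization of the projectivized field of $A$ at the fixed line $\ell$ acts on $T_\ell\PP(T_pM)=\mathrm{Hom}(\ell,T_pM/\ell)$ as multiplication by the difference of the two eigenvalues). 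Thus the determinant is $\mu(\mu'-\mu)$. Consequently: if $A$ has two distinct eigenvalues---i.e. $A$ is non-scalar and semisimple---then both zeros of $\td X$ on $E$ are non-degenerate; if $A$ is a single nontrivial Jordan block then $\mu'=\mu$ at its unique eigenline and $\td X$ is degenerate there. The remaining case is $A=\lambda I$ (scalar, hence semisimple): then $Q_0\equiv0$, so $E$ itself becomes a $1$-dimensional component of $\mathrm{Zero}(\td X)$, and I would instead check directly from the local expansion that the induced endomorphism $L_E(\td X)$ of the normal line bundle $N_{\td M|E}$ is multiplication by the nonzero constant $\lambda$, so $\td X$ is non-degenerate along $E$ too. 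Collecting the three cases yields the asserted equivalence $\td X$ non-degenerate $\iff A$ semisimple.

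I expect the main obstacle to be organizational rather than conceptual: keeping track of the lift of $X$ (including its irrelevant but present higher-order terms) in both charts, making sure the possible zero of $\td X$ at $t=\infty$ is not overlooked, and handling the scalar case $A=\lambda I$ separately---there the meaning of ``non-degenerate on $Z_\lambda$'' switches from ``invertible Jacobian at a point'' to ``invertible normal endomorphism along the curve $E$''. Each ingredient is elementary calculus, but all of them must be in place for the if-and-only-if to be airtight.
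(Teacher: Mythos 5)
Your proposal is correct and takes essentially the same route as the paper: lift $X$ to the two standard charts of the blowup, locate the zeros of $\td X$ on $E$, and test non-degeneracy via the Jacobian at the isolated zeros (or the normal endomorphism along $E$ when the linear part is scalar and $E$ becomes a one-dimensional component). The only cosmetic difference is that the paper first reduces the linear part to Jordan canonical form and checks the three cases explicitly, while you keep a general matrix and identify the fixed points on $E$ with eigenlines, computing the Jacobian determinant as $\mu(\mu'-\mu)$; the two computations agree.
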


\begin{proof}By a linear transform of coordinates, we may assume that the  matrix \[ \left( \begin{array}{ccc}
a_1 & b_1  \\
a_2 & b_2  \end{array} \right)\] is a Jordan canonical form. In particular, $a_2=0$.
 We choose
the coordinates on $\td U:=\pi^{-1}(U)$ as
$$\td U:=\Big\{((z, w), [\xi, \eta])\;\Big|\;(z, w)\in U, z\eta=w\xi\Big\}\subset U\times
\CC\PP^1,$$ which can be covered by two open sets $\td U_1=\{\xi\neq
0\}$ and $\td U_2=\{\eta\neq 0\}.$
 We choose the coordinate functions
$(u_1, v_1)$ on $\td U_1$ where $u_1=z, v_1=\frac {\eta}{\xi}$ and
the coordinate functions $(u_2, v_2)$ on $\td U_2$ where $u_2=\frac
{\xi}{ \eta}, v_2=w$. We want to compute the zero set of $\td X$
on $\td U_1$ and $\td U_2.$\\

On $\td U_1$ the  holomorphic
vector field $\td X$ can be written as
$$\td X=X^1(u_1, u_1v_1)\pd {}{u_1}+\frac 1{u_1}\Big(X^2(u_1, u_1v_1)-X^1(u_1, u_1v_1)v_1\Big)\pd {}{v_1}.$$
Thus, using the coordinates on $\td U_1$ the vector field $\td X$
can be expressed by
$$\td X=\td X^1(u_1, v_1)\pd {}{u_1}+\td X^2(u_1, v_1)\pd {}{v_1},$$
where
 \beqn \td X^1&=&u_1\Big(a_1+b_1v_1+\sum_{i+j\geq
2}\,c_{ij}u_1^{i+j-1}v_1^j\Big), \label{eqXonU11}\\
\td X^2&=&  (b_2-a_1)v_1-b_1v_1^2+\sum_{i+j\geq
2}\,(d_{ij}v_1^j-c_{ij}v_1^{j+1})u_1^{i+j-1}.\label{eqXonU12} \eeqn
Since $p$ is an isolated zero of $X$, the zero set of $\td X$ on
$\td U_1$ lies in the exceptional divisor and  it is given by
$$Z_1=\Big\{(u_1, v_1)\in \td U_1\;\Big|\;u_1=0,   (b_2-a_1)v_1-b_1v_1^2=0\Big\}\subset E\cap \td U_1. $$
which consists of the following cases:

\begin{itemize}
  \item If \[ \left( \begin{array}{ccc}
a_1 & b_1  \\
a_2 & b_2  \end{array} \right) =
\left( \begin{array}{ccc}
a & 0  \\
0 & a  \end{array} \right),
\] where $a\neq 0$,then $Z_1=E\cap \td U_1;$

  \item If \[ \left( \begin{array}{ccc}
a_1 & b_1  \\
a_2 & b_2  \end{array} \right) =
\left( \begin{array}{ccc}
a & 0  \\
0 & b  \end{array} \right),
\] where $a\neq b$, then $Z_1=\{p_1\}$ where $p_1$ has the coordinates
  $$p_1:(u_1, v_1)=(0, 0);
  $$
\item If \[ \left( \begin{array}{ccc}
a_1 & b_1  \\
a_2 & b_2  \end{array} \right) =
\left( \begin{array}{ccc}
a & 1  \\
0 & a  \end{array} \right),
\] where $a\neq 0$, then
  $Z_1=\{p_1\}$ where $p_1$ has the coordinates $$
  p_1:(u_1, v_1)=(0,0),$$
  and $\td X$ is degenerate at this point.
\end{itemize}

Now we calculate the zero set of $\td X$ on $\td U_2.$ Using the
coordinates on $\td U_2$ the holomorphic vector field $\td X$ can be
written as
$$\td X=\hat X^1(u_2, v_2)\pd {}{u_2}+\hat X^2(u_2, v_2)\pd {}{v_2},$$
where $\hat X^1(u_2, v_2)$ and $\hat X^2(u_2, v_2)$ are given by
\beqn \hat X^1(u_2, v_2)&=&b_1+(a_1-b_2)u_2+\sum_{i+j\geq 2}
(c_{ij}u_2^i-d_{ij}u_2^{i+1})v_2^{i+j-1},\label{eqXonU21}\\\hat
X^2(u_2, v_2)&=&v_2\Big(b_2+\sum_{i+j\geq
2}\,d_{ij}u_2^iv_2^{i+j-1}\Big). \label{eqXonU22} \eeqn Thus, the
zero set $Z_2$ of $\td X$ on $E\cap \td U_2$ is given by
$$Z_2=\Big\{(u_2, v_2)\;\Big|\;v_2=0,\;b_1+(a_1-b_2)u_2=0\Big\}.
$$ So we have:
\begin{itemize}
  \item If \[ \left( \begin{array}{ccc}
a_1 & b_1  \\
a_2 & b_2  \end{array} \right) =
\left( \begin{array}{ccc}
a & 0  \\
0 & a  \end{array} \right),
\] where $a\neq 0$, then $Z_2=E\cap \td U_2;$

  \item If \[ \left( \begin{array}{ccc}
a_1 & b_1  \\
a_2 & b_2  \end{array} \right) =
\left( \begin{array}{ccc}
a & 0  \\
0 & b  \end{array} \right),
\] where $a\neq b$, then $Z_2=\{q_1\}$ where $q_1$ has the coordinates
  $$q_1:(u_2, v_2)=(0,0).$$

  \item If \[ \left( \begin{array}{ccc}
a_1 & b_1  \\
a_2 & b_2  \end{array} \right) =
\left( \begin{array}{ccc}
a & 1  \\
0 & a  \end{array} \right),
\] where $a\neq 0$, then
  $Z_2=\emptyset$ .\\

\end{itemize}
Hence the lemma is proved.
\end{proof}

\subsection{The non-degenerate cases}

In this subsection, we will calculate the Futaki invariant of $(\td
\Om_{\ee}, \td X)$ for the non-degenerate cases in Lemma
\ref{lemzero}. It follows from Lemma \ref{lemblowup} that we need to
calculate the local Futaki invariant on the zero set of $\td X$
which lies in the exceptional divisor $E$ . The calculation is not
difficult since we have the nice formula in Corollary
\ref{corfutaki} when $\td X$ is non-degenerate.

\begin{theo}\label{theo001}If $\td X$ is non-degenerate on $\td M$, then
$$f_{\td M}(\td \Om_{\ee}, \td X)=f_M(\Om,   X)+\nu_p(\Om, X)\cdot \ee+O(\ee^2),$$
where $\nu_p(\Om, X)$ is given by
$$\nu_p(\Om, X)=-2\tr_{\Om}(X)(p)+\frac
{2}{3\Om^2}J_{M}(\Om, X).$$
\end{theo}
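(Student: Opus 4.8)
The plan is to extract the first-order term of $f_{\td M}(\td\Om_\ee,\td X)$ directly from the decomposition in Lemma~\ref{lemblowup}. Since $\dim_\CC\td M=2$ we have $\frac n{n+1}=\frac23$ and $\dd=\td\mu-\mu=-\frac1{\Om^2}\ee+O(\ee^2)$; inserting this, the term $-\frac23\dd\,J_M(\Om,X)$ contributes exactly $\frac2{3\Om^2}J_M(\Om,X)\,\ee+O(\ee^2)$, which is already the $J_M$-part of $\nu_p(\Om,X)$. Hence it suffices to prove that the purely local combination
$$\sum_{\la\in\Up}f_{\td Z_\la}(\td\Om_\ee,\td X)+\tfrac23\dd\,J_p(\Om,X)-f_p(\Om,X)=-2\,\tr_\Om(X)(p)\,\ee+O(\ee^2).$$

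Next I would use Lemma~\ref{lemzero}: since $\td X$ is non-degenerate there are only two cases, according to whether the Jordan form of the linearization of $X$ at $p$ is $aI$ (then $\Up=\{E\}$, with $E\cong\CC\PP^1$ a one-dimensional component of $\zero(\td X)$) or $\diag(a,b)$ with $a\ne b$ (then $\Up=\{p_1,q_1\}$, two isolated zeros at $[1:0]$ and $[0:1]$ of $E$). In each case I would read off the linearization data of $\td X$ along the exceptional components from the normal forms (\ref{eqXonU11})--(\ref{eqXonU22}): in the diagonal case $A_{p_1}=b$, $C_{p_1}=a(b-a)$, $A_{q_1}=a$, $C_{q_1}=b(a-b)$, while $A_p=a+b$, $C_p=ab$ on $M$; in the scalar case the eigenvalue of $\td X$ normal to $E$ is $A_E=a$, and $A_p=2a$, $C_p=a^2$. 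One also uses $\td\Om_\ee([E])=\ee$, $c_1(\td M)\cdot[E]=1$, $g(E)=0$ and $\td\mu=\mu+\dd$.

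The remaining ingredient is the value $B_\la=\tr_{\td\Om_\ee}(\td X)_{\td Z_\la}$ of the holomorphy potential of $\td X$ along the exceptional components. Choosing $\td\oo_\ee\in\td\Om_\ee$ equal to $\pi^*\oo_g$ outside a neighbourhood of $E$ and normalizing $\td\theta_{\td X}=\pi^*\theta_X$ there --- the normalization that makes Lemma~\ref{lemblowup} valid --- one has $B_\la\to\theta_X(p)$ as $\ee\to0$, and a short direct computation of $\td\theta_{\td X}$ near $E$ (solving $i_{\td X}\td\oo_\ee=-\bar\partial\td\theta_{\td X}$ against a convenient model metric in $\td\Om_\ee$) gives the refinement
$$B_\la=\theta_X(p)-\lambda_\perp(\la)\,\ee+O(\ee^2),$$
with $\lambda_\perp(\la)$ the eigenvalue of $\td X$ normal to $\td Z_\la$ (so $\lambda_\perp(E)=a$, $\lambda_\perp(p_1)=a$, $\lambda_\perp(q_1)=b$). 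Feeding $A_\la$, $B_\la$, $C_\la$ and $\td\mu=\mu+\dd$ into the formulas of Corollary~\ref{corfutaki} (and the explicit $0$- and $1$-dimensional expressions in the Remark after it) and expanding to first order in $\ee$, one finds: the $\ee^0$ part of $\sum_{\la\in\Up}f_{\td Z_\la}$ collapses by a partial-fraction identity to exactly $f_p(\Om,X)$; the $\theta_X(p)^3$-terms produced by $\partial_\ee\td\mu=-\Om^{-2}$ cancel $\frac23\dd\,J_p(\Om,X)$; the $\mu\,\theta_X(p)^2$-terms cancel among themselves; and what is left is $-2\theta_X(p)\,\ee=-2\tr_\Om(X)(p)\,\ee$. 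Together with the global term this gives $f_{\td M}(\td\Om_\ee,\td X)=f_M(\Om,X)+\nu_p(\Om,X)\ee+O(\ee^2)$, with $\nu_p(\Om,X)=-2\tr_\Om(X)(p)+\frac2{3\Om^2}J_M(\Om,X)$; since $\underline{\theta_X}=J_M(\Om,X)/(3\Om^2)$ by (\ref{eq:JM}) this equals $-2(\theta_X-\underline{\theta_X})(p)$, whose independence of all choices is Lemma~\ref{lem:potential}.

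I expect the main obstacle to be the first-order term of $B_\la$, i.e.\ the behaviour of the holomorphy potential of $\td X$ in a neighbourhood of the exceptional divisor: unlike the linearization data this genuinely involves the K\"ahler metric, so one must pin down a convenient representative of $\td\Om_\ee$ near $E$ and solve $i_{\td X}\td\oo_\ee=-\bar\partial\td\theta_{\td X}$ accurately enough to extract $\partial_\ee B_\la|_{\ee=0}$; in the non-degenerate case at hand this is elementary, but it is the step where the geometry of the blow-up actually enters. A subsidiary but essential point is the bookkeeping of additive constants --- $f_p$, $\td\theta_{\td X}$ and the local invariants are each defined only up to a constant, and only for the compatible normalization of $\theta_X$ and $\td\theta_{\td X}$ does Lemma~\ref{lemblowup}, hence the above cancellation, hold. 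Granting these, the rest is routine algebra with rational functions in $a$, $b$, $\theta_X(p)$, $\mu$ and $\ee$.
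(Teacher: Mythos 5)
Your proposal is correct and follows essentially the same route as the paper: the same reduction via Lemma \ref{lemblowup}, the same two-case analysis from Lemma \ref{lemzero}, and the same first-order data $\td A_\la$, $\td B_\la=\te_X(p)-\lambda_\perp\ee$, $\td C_\la$ fed into Corollary \ref{corfutaki} with $\td\mu=\mu+\dd$. The one step you state rather than carry out, the expansion $\td B_\la=\te_X(p)-\lambda_\perp\ee+O(\ee^2)$, is exactly what the paper does with the Griffiths--Harris representative $\td\oo_\ee=\pi^*\oo_g+\ee\,\p\bar\p\log h$, which gives $\td\te_{\td X}=\pi^*\te_X-\ee\,\td X(\log|\si|^2_h)$ and precisely the values $a$, $a$, $b$ you assign to $E$, $p_1$, $q_1$.
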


\begin{proof}
By Lemma \ref{lemblowup}, it suffices to compute $f_{\td
Z_{\la}}(\td \Om_{\ee}, \td X)(\la\in \Up)$ for the  non-degenerate
cases in Lemma \ref{lemzero}. We divide the proof into two cases:\\

\noindent{\bf Case 1:} For $a_1=b_2,  a_2=b_1=0$   the zero set of $\td X $ on $\td U$
is given by  $Z={E}.$ Note that by (\ref{eqXonU11})-(\ref{eqXonU12})
we have \beq \td A_E:=\tr(L_{E}(\td X))=a_1=\td C_E,\quad \td
\Om_{\ee}([E])=\pi^*\Om([E])-\ee E\cdot E=\ee. \label{eq301}\eeq To
calculate $B_E:=\tr_{\td \Om_{\ee}}(\td X)|_{E}$, we need to choose a suitable
K\"ahler metric on $\td M$ in the class $\td \Om_{\ee}$. We shall choose such a metric as Griffiths and Harris did in their
book \cite{[GH]}. The construction is as follows:

 Following the notations in
Section \ref{sec3.1}, the set $\td U_1$ has local coordinates $(u_1,
v_1)$ with $u_1=z,\;v_1=\frac {\eta}{\xi}$ and the exceptional
divisor is given by $u_1=0.$ The line bundle $[E]$ over $\td U$ has
transition function $z/w$ on $\td U_1\cap \td U_2$ and we can choose
a global section $\si$ of $[E]$ over $\td M$  by $\si=u_1$ on $\td
U_1$ and $\si=1$ on $\td M\backslash\td B_{\frac 12}$ where $\td
B_{r}:=\pi^{-1}(B_{\frac 12}(p)).$ Here $B_r(p)$ denotes the ball on
$M$ centered at $p$ with $|z|^2+|w|^2<r$ and we assume that $\td
B_1\subset \td U.$ Define the Hermitian metric $h_1$ of $[E]$ over
$\td U$ given in $\td U_1$ by
$$h_1=\frac {|\xi|^2+|\eta|^2}{|\xi|^2},$$
and $h_2$  the Hermitian metric of $[E]$ over $\td M\backslash E$
with $|\si|_{h_2}^2=1. $ Let $\rho_1, \rho_2$ be a partition of
unity for the cover $(\td B_{1}, \td M\backslash \td B_{\frac 12})$
of $\td M$ and let $h$ be the global Hermitian metric defined by
$$h=\rho_1h_1+\rho_2h_2.$$Then the function $|\si|_h^2$ on $\td B_{\frac 12}$ is given by \beq
|\si|_h^2=\frac {|\xi|^2+|\eta|^2}{|\xi|^2}\cdot
|u_1|^2=|u_1|^2+|u_1v_1|^2. \label{eq101}\eeq Given a K\"ahler
metric $\oo_g$ with the K\"ahler class $\Om=\frac {\i}{2\pi}[\oo_g]$
on $M$, the induced metric $\td \oo_{\ee}$ in the K\"ahler class
$\td \Om_{\ee}=\pi^*\Om-\ee c_1(M)$ is given by $$\td
\oo_{\ee}=\pi^*\oo_g+\ee \p\bar \p \log h.$$
Thus, the
holomorphy potential $\td \te_{\td X}$ of $\td X$ with respect to
$\td \oo_{\ee}$ is given by
$$\td \te_{\td X}=\pi^*\te_X-\ee\cdot \td X\left(\log |\si|^2_h\right).$$
Using the expression (\ref{eqXonU11})-(\ref{eqXonU12}) and
(\ref{eq101}), we have $\td X\left(\log |\si|^2_h\right)|_E=a_1.$
In conclusion, we have
 \beq \td B_E=\te_p-a_1\ee, \label{eq300}\eeq
where $\te_p=\te_X(p).$

Note that the genus of the exceptional divisor is zero  and $\td
\mu=\mu+\dd,$
 by Corollary \ref{corfutaki} we have \beqs f_{E}(\td \Om_{\ee}, \td
X)&=&(2
\td B_{E}-2\td \mu B_{E}^2\td A_{E}^{-1})\td \Om_{\ee}([E])\nonumber\\
&&+(\frac {2\td \mu}{3}\td A_{E}^{-2}\td B_{E}^3)c_1(\td M)([E])+(\td
A_{E}^{-1}\td B_{E}^2-\frac {2\td \mu}{3}\td A_{E}^{-2}\td
B_{E}^3)(2-2g(E))\\
&=&\frac {2\te_p^2}{a_1}-\frac {2\te_p^3}{3a_1^2}\mu-2\te_p\ee-\frac
{2\te_p^3}{3a_1^2}\dd+O(\ee^2), \eeqs where we used (\ref{eq301}).
On the other hand, using Corollary \ref{corfutaki} again we can
compute $f_p(\Om, X)$ and $J_p(\Om, X)$ as follows:
$$f_{p}(\Om, X)=\frac {2\te_p^2}{a_1}-\frac {2\te_p^3}{3a_1^2}\mu,
\quad J_p(\Om, X)=\frac { \te_p^3}{ a_1^2},$$ where we used the fact
that $A_p=2a_1, \; B_p=\te_p$ and $C_p=a_1^2.$
 Combining these with
Lemma \ref{lemblowup} we have $$ f_{\td M}(\td \Om_{\ee}, \td
X)=f_{M}(  \Om ,   X)-\frac 2{3}\dd J_{M}(  \Om ,   X)
-2\te_p\ee+O(\ee^2). $$\\

\noindent{\bf Case 2:} For $a_1\neq b_2$ and $a_2=b_1=0$, the zero set  $Z=\{p_1, q_1\}$ where
    $p\in \td U_1$ and $q\in \td U_2$ and the coordinates are given by
     $$p_1:(u_1, v_1)=(0, 0),\quad
     q_1:(u_2, v_2)=(0,0).$$
By the expression
(\ref{eqXonU11})-(\ref{eqXonU12}) of $\td X$ near $p_1$ we have
$$\td A_{p_1}=b_2,\quad \td B_{p_1}=\te_p-a_1\ee,\quad \td C_{p_1}=a_1(b_2-a_1),$$
where $\td B_{p_1}$ can be calculated as Case 1.
 Thus, the local Futaki invariant of $p_1$ is give by \beq
f_{p_1}(\td \Om_{\ee}, \td X)=\frac
{b_2(\te_p-a_1\ee)^2}{a_1(b_2-a_1)} - \frac
{2(\te_p-a_1\ee)^3(\mu+\dd)}{3a_1(b_2-a_1)}.\label{eq003}\eeq
Similarly,  by the expression (\ref{eqXonU21})-(\ref{eqXonU22}) of
$\td X$ near $q_1$ we have
$$\td A_{q_1}=a_1,\quad \td B_{q_1}=\te_p-b_2\ee,\quad \td C_{p_1}=b_2(a_1-b_2).$$
The local Futaki invariant of $q_1$ is give by \beq f_{q_1}(\td
\Om_{\ee}, \td X)=\frac {a_1(\te_p-b_2\ee)^2}{b_2(a_1-b_2)} - \frac
{2(\te_p-b_2\ee)^3(\mu+\dd)}{3b_2(a_1-b_2)}.\label{eq003}\eeq Next,
we calculate the local Futaki invariant of $p.$ Clearly, on the
point $p\in M$,
$$A_p=a_1+b_2,\quad B_p=\te_p,\quad C_p=a_1b_2$$
and we have
$$f_p(\Om, X)=\frac {(a_1+b_2)\te_p^2}{a_1b_2}-\frac {2\te_p^3\mu}{3a_1b_2},
\quad J_p(\Om, X)=\frac {\te_p^3}{a_1b_2}.$$ Collecting the above
results, we have \beqs f_{\td M}(\td \Om_{\ee}, \td X)&=&f_{M}(  \Om
, X)-\frac 2{3}\dd J_{M}(  \Om ,   X) +f_{p_1}(\td \Om_{\ee}, \td
X)+f_{q_1}(\td \Om_{\ee}, \td X)+\frac 23\dd J_p(\Om, X)-f_p(\Om,
X)\\
&=&f_{M}(  \Om ,   X)-\frac 2{3}\dd J_{M}(  \Om ,   X)
-2\te_p\ee+O(\ee^2). \eeqs
 The theorem is proved.

\end{proof}

\subsection{The degenerate case}\label{sec3.3}

In this subsection, we will calculate the Futaki invariant when $\td
X$ is degenerate on the exceptional divisor $E.$ In this case, the
calculation of Bott, Futaki and Tian fails and it should be related
to the general theory of Residue currents (cf. \cite{[Tsikh]} and
reference therein). However, when $M$ has complex dimension $2$ , we
can do the direct calculation using only the elementary calculus:

\begin{theo}\label{theo:010}Let $p$ be an isolated  zero of $X$. If $\td X$ is degenerate at
a zero point $\td p\in E,$ then the Futaki invariant of $(\td
\Om_{\ee}, \td X)$ is given by
$$f_{\td M}(\td \Om_{\ee}, \td X)=f_{M}(\Om, X)+\nu_p(\Om, X)\cdot \ee+O(\ee^2),$$
where
$$\nu_p(\Om, X)=-2\tr_{\Om}(X)(p)+\frac 2{3\Om^2}J_M(\Om, X).$$

\end{theo}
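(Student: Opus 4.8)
The plan is to localize $f_{\td M}(\td\Om_\ee,\td X)$ by Lemma~\ref{lemblowup}, to reduce everything to the single local contribution of the degenerate zero lying on $E$, and then to compute that contribution directly from the residue formulas (\ref{eq016}) and (\ref{eq013}), since Bott's formula (Lemma~\ref{lem:bott}) is no longer available. By Lemma~\ref{lemzero}, $\td X$ is degenerate at a point of $E$ precisely when the linear part of $X$ at $p$ is a non-semisimple Jordan block, so after a linear change of coordinates we may assume $a_1=b_2=a\neq0$, $a_2=0$, $b_1=1$; then the only zero of $\td X$ on $E$ is the point $p_1:(u_1,v_1)=(0,0)\in\td U_1$, and $Z_2=\emptyset$. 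Hence $\Up=\{p_1\}$, and taking $n=2$ in Lemma~\ref{lemblowup},
$$f_{\td M}(\td\Om_\ee,\td X)=f_M(\Om,X)-\tfrac23\dd J_M(\Om,X)+f_{p_1}(\td\Om_\ee,\td X)+\tfrac23\dd J_p(\Om,X)-f_p(\Om,X).$$
Since $X$ is non-degenerate at $p$ with linearization of trace $2a$ and determinant $a^2$, Corollary~\ref{corfutaki} gives $f_p(\Om,X)=\tfrac{2\te_p^2}{a}-\tfrac{2\mu\te_p^3}{3a^2}$ and $J_p(\Om,X)=\tfrac{\te_p^3}{a^2}$, where $\te_p=\te_X(p)$. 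Using $\dd=-\frac{1}{\Om^2}\ee+O(\ee^2)$ and the definition of $\nu_p$, the assertion of the theorem becomes \emph{equivalent} to the single identity
$$f_{p_1}(\td\Om_\ee,\td X)=\frac{2\te_p^2}{a}-\frac{2(\mu+\dd)\,\te_p^3}{3a^2}-2\te_p\,\ee+O(\ee^2),$$
so everything reduces to evaluating $I_{p_1}$ and $J_{p_1}$ as defined by (\ref{eq016}) and (\ref{eq013}).

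For this I would fix the geometry near $p_1$ exactly as in Case~1 of the proof of Theorem~\ref{theo001}: use the Griffiths--Harris metric $\td\oo_\ee=\pi^*\oo_g+\ee\,\p\b\p\log h$ with $|\si|^2_h=|u_1|^2(1+|v_1|^2)$ near $E$, so that the holomorphy potential is $\td\te_{\td X}=\pi^*\te_X-\ee\,\td X(\log|\si|^2_h)$, with $\td\te_{\td X}(p_1)=\te_p-a\ee+O(\ee^2)$; since $p_1$ is an \emph{isolated} zero of $\td X$ inside $E$, the potential $\td\te_{\td X}$ is not constant along $E$, and I would also record its first jet there. A short computation from (\ref{eqXonU11})--(\ref{eqXonU12}) shows that, after one further coordinate change preserving $E$, the linearization of $\td X$ at $p_1$ is $\diag(a,0)$ --- non-degenerate in the direction $u_1$ transverse to $E$, with a vanishing eigenvalue along $E$, where in fact $\td X^2|_{u_1=0}=-v_1^2$ exactly. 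This ``half-degeneracy'' is the structural feature I would exploit.

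The heart of the proof is the direct evaluation, for $F=(-\Delta_{\td g}\td\te_{\td X}+Ric(\td g))(\td\te_{\td X}+\td\oo_\ee)^2$ and for $F=(\td\te_{\td X}+\td\oo_\ee)^3$, of the limits
$$\lim_{\de\to0^+}\int_{\p B_\de(p_1)}F\wedge\big(\eta-\eta\wedge\b\p\eta\big),\qquad\eta(Y)=\frac{\td g(Y,\b{\td X})}{\td g(\td X,\b{\td X})}.$$
By the same Stokes argument that gives (\ref{eq008}) these limits are unchanged when $\p B_\de(p_1)$ is replaced by the boundary of any shrinking neighbourhood of $p_1$, so I would use a polydisc-type neighbourhood adapted to the weights of $\td X$. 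On its faces the singular factor $\eta-\eta\wedge\b\p\eta$ collapses to an explicit one-variable expression --- this is where the non-degeneracy of $\td X^1$ in the $u_1$-direction enters --- and what survives is a one-dimensional contour integral in $v_1$ with a pole of order two coming from $\td X^2|_{u_1=0}=-v_1^2$, which is evaluated by elementary calculus. Inserting $\td\te_{\td X}(p_1)$ and the first jet of $\td\te_{\td X}$ along $E$ then gives the value of $f_{p_1}(\td\Om_\ee,\td X)$ displayed above, and Lemma~\ref{lemblowup} concludes. As a guide and a consistency check, this value coincides with $\lim_{s\to0}(f_{p_1}+f_{q_1})$ computed from Corollary~\ref{corfutaki} for the nearby \emph{non-degenerate} field whose linear part has distinct eigenvalues $a+s,\,a$ and whose two zeros on $E$ collide onto $p_1$ as $s\to0$; one could try to organise the argument around this (showing that $f_{p_1}$ depends only on finitely many jets of $(\td X,\td g)$ at $p_1$ together with $\td\te_{\td X}(p_1)$, and is stable under such a deformation), but the direct computation is shorter and more elementary.

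The main obstacle is precisely this one-variable residue evaluation in the degenerate direction: because $\td X^2$ vanishes to order two along $E$, the forms $\eta$ and $\b\p\eta$ are of order $|v_1|^{-2}$ rather than $|v_1|^{-1}$ near $p_1$, so the naive scaling count does not make the limit manifestly finite and Bott's formula cannot be applied; the surviving terms have to be extracted by hand. Two further points need care: one must verify that the higher-order jets of $\td X$ at $p_1$ (the coefficients $c_{ij},d_{ij}$) do not affect the leading-order value, and one must keep all estimates uniform in $\ee$ so that everything beyond the linear term can legitimately be absorbed into $O(\ee^2)$.
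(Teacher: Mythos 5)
Your reduction is exactly the paper's: Lemma \ref{lemblowup} plus Lemma \ref{lemzero} and Corollary \ref{corfutaki} at $p$ reduce everything to the single identity $f_{p_1}(\td\Om_\ee,\td X)=\frac{2\te_p^2}{a}-\frac{2(\mu+\dd)\te_p^3}{3a^2}-2\te_p\ee+O(\ee^2)$, and the value you state is the correct one (your collision heuristic with eigenvalues $a+s,a$ does reproduce even the paper's exact expressions for $I_{p_1}$ and $J_{p_1}$, though, as you say, it is not a proof). The gap is in what you call the heart of the proof. You assert that on the faces of a weighted polydisc the factor $\eta\wedge\b\p\eta$ ``collapses to an explicit one-variable expression'' and the limit is a one-variable contour integral with a double pole at $v_1=0$, evaluated by elementary calculus. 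This is not carried out, and it understates the difficulty: the integrand is not holomorphic (the test function is built from $\td\te_{\td X}$, $\Delta_{\td g}\td\te_{\td X}$ and $Ric(\td g)$, and $\eta$ involves the full metric), so there is no holomorphic residue to fall back on. The correct statement of the limit is the paper's Lemma \ref{lem:main}, $\lim_{r\to0}\int_{\p B_r}\phi\,\eta\wedge\b\p\eta=\frac{4\pi^2}{a}\p_{v_1}\phi(0)-\frac{4\pi^2}{a^2}\phi(0)$, and its proof requires the adapted domains $\{|\td X|_g^2\le r^4\}$, the anisotropic rescaling $(u_1,v_1)=(r^2u,rv)$, symmetry arguments to kill several terms that are individually of size $1/r$, and a further computation (Lemma \ref{lem:appendix}) in which the off-diagonal metric coefficient $g_{1\b2}$ and the derivative $\p_{v_1}(\det g)(0)$ enter at exactly the order that produces the answer and cancel only in the final combination. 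Nothing in your sketch shows why these metric contributions either vanish on your polydisc faces or assemble into the universal coefficients $4\pi^2/a$ and $-4\pi^2/a^2$; establishing this is precisely the content of Section \ref{sec3.4} of the paper and cannot be dismissed as an elementary one-variable residue.

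A second, smaller omission: you keep the full jet of $X$ at $p$ and merely flag that one ``must verify'' that the coefficients $c_{ij},d_{ij}$ do not affect the leading order. The paper avoids this issue entirely by invoking Poincar\'e's linearization theorem (the eigenvalues $a,a$ are non-resonant and in the Poincar\'e domain), so that $X$ may be assumed linear near $p$ before blowing up; if you do not linearize, the higher-order terms of $\td X^2$ in $u_1$ (e.g.\ the $d_{20}u_1$ term in the linearization at $p_1$) and of $\td X^1$ contribute to $\eta$ at orders that your scaling count does not automatically suppress, and this control would have to be added to the already missing evaluation of the limit.
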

\begin{proof}
First, we claim that we can find a holomorphic coordinate transform around $p$ such that in the new coordinates, our holomorphic vector field contains only linear terms. The reason is the following:

We call a vector $\lambda=(\lambda_1,\dots,\lambda_n)\in \CC^n$ to be ``resonant'', if there is an integral relation of the form $\lambda_k=\sum_{i=1}^n m_i \lambda_i$, where $m_i$ are non-negative integers with $\sum_i m_i\geq 2$. And we say $\lambda$ belongs to the Poincar\'e domain if the convex hull of $\lambda_1,\dots,\lambda_n$ in $\CC$ does not contain the origin.

\begin{theo}[Poincar\'e, \cite{[Ar]},P190]
  If the eigenvalues of the linear part of a holomorphic vector field at a singular point (i.e. zero point) belong to the Poincar\'e domain and are non-resonant, then the vector field is biholomorphically equivalent to its linear part in a neighborhood of the singular point.
\end{theo}
 The idea of this theorem is that if the linear part of the vector field satisfies the ``non-resonant condition'', then we can construct a family of holomorphic coordinate transforms that eliminate the $k$-th order terms recursively for any $k\geq 2$. And if the eigenvalues are in the Poincar\'e domain, then the compositions of the coordinate transforms also converge to a holomorphic coordinate transform. The interested reader can find a detail discussion in \cite{[Ar]}.

 In our case, the linear part of $X$ clearly satisfies the conditions in Poincar\'e's theorem, so in the following discussion, we can assume without loss of generality that
$$X^1(z, w)=az+w,\quad X^2(z,w)=aw.$$
Then in the coordinates $(u_1,v_1)$ of previous subsections, $\td X$
can be written as the following on $\td U_1:$
$$\td X= u_1(a+v_1)\pd {}{u_1}-v_1^2\pd {}{v_1}.$$

It is clear from the discussion in Section 2 that in defining  $I_p$ and $ J_p$, we can use any family of domains shrinking to $p$. So in this section, we choose special domains to simplify the computation. Let $B_r$ be a sufficiently small ``distorted'' ball around $p_1$, defined by $|\td X|_g^2(u_1,v_1)\leq r^4$. We have the following
\begin{lem}\label{lem:main}
  Let $\phi$ be any smooth function on $\td U_1$. Then we have
$$\lim_{r\to 0+}\int_{\p B_r }\phi\eta\wedge\b\p\eta=\frac{4\pi^2}{a}\pd {\phi}{v_1}(0)-\frac{4\pi^2}{a^2}\phi(0).$$
\end{lem}

We use this lemma to calculate $f_{p_1}(\Om_{\ee}, \td X)$. First, note that for any smooth 2-form $\chi$, we have
$$\lim_{r\to 0+}\int_{\p B_r}\chi\wedge\eta=0.$$ This can be seen from the expression of $\eta$ in the next subsection. By
(\ref{eq013}), we have \beqs J_{p_1}(\Om_{\ee}, \td
X)&=&-\lim_{r\ri 0^+}\frac 1{4\pi^2}\int_{\p B_r}(\td
\te_{\td X}+\td \oo_g)^{3}\wedge \eta\wedge \bar
\p\eta\\&=&-\lim_{r\ri 0^+}\frac 1{4\pi^2}\int_{\p
B_r}\td \te_{\td X}^{3}\cdot \eta\wedge \bar \p\eta. \eeqs
To calculate the last term, we need to expand the function
$\td\te_{\td X}. $ In fact, near $p_1$ we
have
 \beqs \td X(\log|\si|^2_h)&=&u_1(a+v_1)\pd
{}{u_1}(\log(|u_1|^2+|u_1v_1|^2))-v_1^2\pd
{}{v_1}(\log(|u_1|^2+|u_1v_1|^2))\\
 &=&a+\frac{v_1}{1+|v_1|^2}.
  \eeqs It follows
that \beq \td \te_{\td X}(0)=\te_p-a\ee, \quad  \pd{\td\te_{\td X}}{v_1}(0)=-\ee.\label{eq501}\eeq
By Lemma
\ref{lem:main} and (\ref{eq501}), we have \beq J_{p_1}(\td \Om_{\ee},
\td X)=\frac {3(\te_p-a\ee)^2\ee}{a}+\frac
{(\te_p-a\ee)^3}{a^2} =\frac
{\te_p^3}{a^2}+O(\ee^2).\label{eq500}\eeq

Next, we calculate
$I_{p_1}(\td \Om_{\ee}, \td X).$ When $n=2$ we have
 \beqs
I_{p_1}(\td \Om_{\ee}, \td X)&=&-\lim_{r\ri 0}\Big(\frac
{\sqrt{-1}}{2\pi}\Big)^n\,\int_{\p B_r}\,(-\Delta_{\td
g}\td \te_{\td X}+Ric(\td g))(\td \te_{\td X}+\oo_{\td g})^n\wedge
\sum_{k=0}^{n-1}(-1)^k\eta\wedge (\bar \p\eta)^{k}\\
&=&-\frac 1{4\pi^2}\lim_{r\ri 0} \int_{\p B_r}\,-\Delta_{\td
g}\td \te_{\td X}\td \te_{\td X}^2\cdot\eta\wedge \bar\p \eta.  \eeqs
Direct computation shows that
 \beq -\Delta_{\td
g}\td \te_{\td X}(0)= a, \quad \pd{}{v_1}(-\Delta_{\td
g}\td \te_{\td X})(0)=-1. \label{eq502}
\eeq
 Combining this with (\ref{eq501}) and Lemma
\ref{lem:main}, we have
$$I_{p_1}(\td \Om_{\ee}, \td X)=\frac {2\te_p^2}{a}-2\te_p\ee+O(\ee^2).$$
On the other hand, we have \beq f_{p}(\Om, X)=\frac
{2\te_p^2}{a}-\frac {2\te_p^3}{3a^2}\mu, \quad J_p(\Om, X)=\frac
{ \te_p^3}{ a^2}.\label{eq021}\eeq Combining the above results, we
have \beqs f_{\td M}(\td \Om_{\ee}, \td X)&=&f_{M}(  \Om , X)-\frac
23\dd J_M(\Om, X)+f_{p_1}(\td \Om_{ \ee}, \td X)-f_{p}(\Om, X)+\frac
23\dd J_p(\Om, X)\\
&=&f_{M}(  \Om ,   X)-2\te_p\ee-\frac 23\dd J_M(\Om,
X)+O(\ee^2).\eeqs The theorem is proved.

\end{proof}

\subsection{Proof of Lemma \ref{lem:main}}\label{sec3.4}

We first write $\eta$ as (for simplicity, we sometimes use $(z^1,z^2)$ to denote $(u_1,v_1)$)
$$\eta=\eta_idz^i,$$
where $\eta_i=\frac{\alpha_i}{|\td X|^2_g}$, and $\alpha_i=g_{i\bar j}\overline{\td X^j}$. Direct computation shows that
$$|\td X|^2_g=g_{1\bar 1}|u_1(a+v_1)|^2-2 Re(g_{1\bar 2}u_1(a+v_1)\overline{v_1}^2)+g_{2\bar 2}|v_1|^4,$$
and
$$\eta\wedge\b\p \eta=\frac{(\alpha_i dz^i)\wedge \b\p \alpha_j \wedge dz^j}{|\td X|^4_g}=\frac{\alpha_i\alpha_{j,\b k}dz^i\wedge d\b z^k\wedge dz^j}{|\td X|^4_g},$$
where $\alpha_{\dots,\b k}$ means derivative in the direction of $\b z^k$. In our 2-dimensional case, we have
\beqs\eta\wedge\b\p \eta&=&\frac{\alpha_1\alpha_{2,\b 1}du_1\wedge d\b u_1 \wedge dv_1+\alpha_1\alpha_{2,\b 2}du_1\wedge d\b v_1 \wedge dv_1 }{|\td X|^4_g}+\\
& & \frac{\alpha_2\alpha_{1,\b 1}dv_1\wedge d\b u_1 \wedge du_1+\alpha_2\alpha_{1,\b 2}dv_1\wedge d\b v_1 \wedge du_1 }{|\td X|^4_g}\\
&=& \frac{(\alpha_1\alpha_{2,\b 1}-\alpha_2\alpha_{1,\b 1})du_1\wedge d\b u_1 \wedge dv_1}{|\td X|^4_g}+
\frac{(\alpha_2\alpha_{1,\b 2}-\alpha_1\alpha_{2,\b 2})du_1\wedge dv_1\wedge d\b v_1 }{|\td X|^4_g}.
\eeqs
Now we have the following:
\beqs
\alpha_1 &=& g_{1\b 1}\overline{u_1(a+v_1)}-g_{1\b 2}\b v_1^2,  \\
\alpha_2 &=& g_{2\b 1}\overline{u_1(a+v_1)}-g_{2\b 2}\b v_1^2,   \\
\alpha_{1,\b 1} &=& g_{1\b 1,\b 1}\overline{u_1(a+v_1)}-g_{1\b 2,\b 1}\b v_1^2+g_{1\b 1}\overline{(a+v_1)}, \\
\alpha_{1,\b 2} &=& g_{1\b 1,\b 2}\overline{u_1(a+v_1)}-g_{1\b 2,\b 2}\b v_1^2+g_{1\b 1}\b u_1-2g_{1\b 2}\b v_1,  \\
\alpha_{2,\b 1} &=& g_{2\b 1,\b 1}\overline{u_1(a+v_1)}-g_{2\b 2,\b 1}\b v_1^2+g_{2\b 1}\overline{(a+v_1)},  \\
\alpha_{2,\b 2} &=& g_{2\b 1,\b 2}\overline{u_1(a+v_1)}-g_{2\b 2,\b 2}\b v_1^2+g_{2\b 1}\b u_1-2g_{2\b 2}\b v_1.
\eeqs

\begin{proof}[Proof of Lemma \ref{lem:main}:]

To compute the limit
$$\lim_{r\to 0}\int_{\p B_r }\phi\eta\wedge\b\p\eta,$$
We use scaling: Set $(u_1,v_1)=(r^2u,rv)$, and for a function $f(u_1,v_1,\b u_1,\b v_1)$, the function $f^{(r)}$ is defined to be
$$f^{(r)}(u,v,\b u,\b v)=f(r^2u,rv,r^2\b u,r\b v).$$
Now in the coordinate $(u,v)$, the boundary $\partial B_r$ becomes $$S_r:=\{(u,v)|g^{(r)}_{1\bar 1}|u(a+rv)|^2-2 Re(g^{(r)}_{1\bar 2}u(a+rv)\b v^2)+g^{(r)}_{2\bar 2}|v|^4=1 \}.$$ Recall that on $\partial B_r$, we have $|\td X|_g^2\equiv r^4$. Then we have:
\beqs
\int_{\p B_r }\phi\eta\wedge\b\p\eta &=& \int_{S_r} \phi^{(r)}
\frac{r^5(\alpha^{(r)}_1\alpha^{(r)}_{2,\b 1}-\alpha^{(r)}_2\alpha^{(r)}_{1,\b 1})du\wedge d\b u \wedge dv+
r^4(\alpha^{(r)}_2\alpha^{(r)}_{1,\b 2}-\alpha^{(r)}_1\alpha^{(r)}_{2,\b 2})du\wedge dv\wedge d\b v }
{r^8}\\
&=& \int_{S_r} \phi^{(r)}
\frac{r(\alpha^{(r)}_1\alpha^{(r)}_{2,\b 1}-\alpha^{(r)}_2\alpha^{(r)}_{1,\b 1})du\wedge d\b u \wedge dv+
(\alpha^{(r)}_2\alpha^{(r)}_{1,\b 2}-\alpha^{(r)}_1\alpha^{(r)}_{2,\b 2})du\wedge dv\wedge d\b v }
{r^4}.
\eeqs
Now when $r\to 0$, for any function $f$ we have $f^{(r)}\to f(p_1)$. Moreover, we have that
$$\Big(  g^{(r)}_{1\bar 1}|u(a+rv)|^2-2Re(g^{(r)}_{1\bar 2}u(a+rv)\b v^2)+g^{(r)}_{2\bar 2}|v|^4  \Big)^2
\to Q_0(au,-v^2),$$
where $Q_0$ is the hermitian quadratic form defined by $g_{i\b j}(p_1)$. Note that $Q_0(au,-v^2)$ is invariant under the symmetries $(u,v)\mapsto (u,-v)$ and $(u,v)\mapsto (-u,\sqrt{-1}v)$. Direct computation shows that
\beqs
\frac{\alpha^{(r)}_1\alpha^{(r)}_{2,\b 1}-\alpha^{(r)}_2\alpha^{(r)}_{1,\b 1}}{r^3}&=& \frac{\overline{av^2}\det g^{(r)}}{r}+\b v^3 \det g^{(r)}+r(\dots)\\
\frac{\alpha^{(r)}_2\alpha^{(r)}_{1,\b 2}-\alpha^{(r)}_1\alpha^{(r)}_{2,\b 2}}{r^4}&=& \frac{2\overline{auv}\det g^{(r)}}{r}+
\overline{uv^2}(\dots)+\overline{u^2(a+rv)^2}(\dots)+\overline{uv^2(a+rv)}(\dots)\\
& &+\overline{v^4}(\dots)+r(\dots).
\eeqs

We claim that when taking limit, we need only to consider the terms with the factor $\frac{1}{r}$. First, for terms with a factor $r$, the limit vanishes automatically. For other terms without the factor $\frac{1}{r}$, the integration operation commutes with taking limit, and we can use the special symmetries of $Q_0(au,-v^2)$ to prove that the limit integral also vanishes. To sum up, we have
\begin{lem}We have
\beqs
  \lim_{r\to 0} \int_{\p B_r }\phi\eta\wedge\b\p\eta &=&\lim_{r\to 0}\frac{\b a}{r}\int_{S_r}
 \phi^{(r)} \det g^{(r)}\Big(\b v^2du\wedge d\b u \wedge dv+2\overline{uv}du\wedge dv\wedge d\b v \Big)\\
 &=& \lim_{r\to 0}\frac{\b a \Phi(r)}{r},
\eeqs
where $\Phi(r)$ is the integral over $S_r$.
\end{lem}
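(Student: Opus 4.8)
The plan is to turn the informal paragraph preceding the lemma into a rigorous argument in three moves: first control the rescaled level sets $S_r$ uniformly in $r$; then discard the summands of the rescaled integrand that carry an explicit factor of $r$; and finally show the remaining order-$r^{0}$ summands integrate to $0$ in the limit, using the symmetries of $Q_0$ noted above.

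For the geometry I would set $F_r(u,v)=g^{(r)}_{1\b1}|u(a+rv)|^{2}-2\Re\!\big(g^{(r)}_{1\b2}u(a+rv)\b v^{2}\big)+g^{(r)}_{2\b2}|v|^{4}$, so that $S_r=\{F_r=1\}$ and $F_r\to F_0:=Q_0(au,-v^{2})$ in $C^\infty$ on compact subsets of $\CC^{2}$. Positive definiteness of $(g_{i\b j}(p_1))$ gives $F_0(u,v)\ge\lambda\,(|a|^{2}|u|^{2}+|v|^{4})$ for some $\lambda>0$, so $S_0$ is bounded, hence compact; solving $\p_{\b u}F_0=\p_{\b v}F_0=0$ one finds the only critical point is the origin (any other solution would force $\det(g_{i\b j}(p_1))=0$), where $F_0=0\neq1$. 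Thus $1$ is a regular value, $S_0$ is a smooth compact real hypersurface, and by structural stability of compact regular level sets there is $r_0>0$ so that for $0<r<r_0$ the $S_r$ are smooth, contained in one fixed neighbourhood of $S_0$, with $\mathrm{vol}_{3}(S_r)\le C$ independent of $r$, and there are diffeomorphisms $\Psi_r\colon S_0\to S_r$ depending smoothly on $(\,\cdot\,,r)$ with $\Psi_0=\mathrm{id}$.

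Next, using the expansions of the $\alpha^{(r)}_i$ and $\alpha^{(r)}_{i,\b j}$ recorded above, I would decompose
$$\int_{\p B_r}\phi\,\eta\wedge\b\p\eta=\frac{\b a}{r}\,\Phi(r)+\int_{S_r}\phi^{(r)}\big(\Xi_0^{(r)}+r\,\td\Xi^{(r)}\big),$$
where $\frac{\b a}{r}\Phi(r)=\frac{\b a}{r}\int_{S_r}\phi^{(r)}\det g^{(r)}\big(\b v^{2}\,du\wedge d\b u\wedge dv+2\overline{uv}\,du\wedge dv\wedge d\b v\big)$ is the $r^{-1}$-pole contribution, the form $\td\Xi^{(r)}$ has coefficients smooth and bounded uniformly in $r$ near $S_0$, and
$$\Xi_0^{(r)}=\b v^{3}\det g^{(r)}\,du\wedge d\b u\wedge dv+\Big(\overline{uv^{2}}(\dots)+\overline{u^{2}(a+rv)^{2}}(\dots)+\overline{uv^{2}(a+rv)}(\dots)+\overline{v^{4}}(\dots)\Big)du\wedge dv\wedge d\b v$$
converges uniformly near $S_0$ to a constant-coefficient form $\Xi_0$ which is a $\CC$-linear combination of $\b v^{3}\,du\wedge d\b u\wedge dv$, $\overline{uv^{2}}\,du\wedge dv\wedge d\b v$, $\overline{u^{2}}\,du\wedge dv\wedge d\b v$ and $\overline{v^{4}}\,du\wedge dv\wedge d\b v$. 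The $r$-explicit piece is negligible, $\big|\int_{S_r}\phi^{(r)}\,r\,\td\Xi^{(r)}\big|\le C\,r\,\mathrm{vol}_{3}(S_r)\to0$, and pulling back along $\Psi_r$ and using $\phi^{(r)}\to\phi(p_1)$, $\Psi_r^{*}\Xi_0^{(r)}\to\Xi_0$ uniformly gives $\int_{S_r}\phi^{(r)}\Xi_0^{(r)}\to\phi(p_1)\int_{S_0}\Xi_0$.

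It remains to kill the limit integrals by symmetry. The map $\si(u,v)=(-u,\i v)$ is $\CC$-linear, hence orientation preserving on $\CC^{2}$; since $Q_0(-X^{1},-X^{2})=Q_0(X^{1},X^{2})$ we have $F_0\circ\si=F_0$, so $\si$ preserves $\{F_0<1\}$ and restricts to an orientation-preserving diffeomorphism of $S_0$ preserving its boundary orientation. A short monomial check shows $\si^{*}$ sends each of the four forms above to its negative, so $\si^{*}\Xi_0=-\Xi_0$ and $\int_{S_0}\Xi_0=\int_{S_0}\si^{*}\Xi_0=-\int_{S_0}\Xi_0=0$; the same computation gives $\si^{*}\big(\b v^{2}\,du\wedge d\b u\wedge dv+2\overline{uv}\,du\wedge dv\wedge d\b v\big)=-\i\big(\b v^{2}\,du\wedge d\b u\wedge dv+2\overline{uv}\,du\wedge dv\wedge d\b v\big)$, whence $\Phi(0)=\lim_{r\to0}\Phi(r)=\phi(p_1)\det g(p_1)\int_{S_0}\big(\b v^{2}\,du\wedge d\b u\wedge dv+2\overline{uv}\,du\wedge dv\wedge d\b v\big)=0$. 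Since $r\mapsto\Phi(r)=\int_{S_0}\Psi_r^{*}\big(\phi^{(r)}\det g^{(r)}(\cdots)\big)$ is smooth with $\Phi(0)=0$, $\frac{\b a}{r}\Phi(r)\to\b a\,\Phi'(0)$; combining, $\int_{\p B_r}\phi\,\eta\wedge\b\p\eta=\frac{\b a}{r}\Phi(r)+o(1)$ and both limits exist and coincide, which is the assertion. The step I expect to be delicate is this last one: one must be sure that, after collecting all the $\alpha^{(r)}_{i,\b j}$-expansions, the order-$r^{0}$ part of the integrand really is a combination of exactly those four monomials — so that no $\si$-invariant monomial leaks in at order $r^{0}$ — and one must pin down the action of $\si$ on the boundary orientation of $S_0$ (this is where the symmetries $(u,v)\mapsto(-u,\i v)$ and $(u,v)\mapsto(u,-v)$ of $Q_0$ enter). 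The estimates on $S_r\to S_0$ in the second step are routine but should be stated with care, since both the interchange of integral and limit and the smoothness of $r\mapsto\Phi(r)$ rest on them.
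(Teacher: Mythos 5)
Your proposal is correct and follows essentially the same route as the paper: discard the summands carrying an explicit factor of $r$ using the uniform control of $S_r$, and kill the limits of the order-$r^0$ summands by the symmetries of $Q_0(au,-v^2)$ (your single symmetry $(u,v)\mapsto(-u,\sqrt{-1}v)$ indeed sends each of the limiting monomial forms to its negative, so it suffices). The extra material on regularity of $S_0$, the diffeomorphisms $\Psi_r$, and the vanishing $\Phi(0)=0$ is a careful filling-in of details the paper leaves implicit, not a different argument.
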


Now we use the Taylor expansion of the function $\phi^{(r)} \det g^{(r)}$, and using the symmetry of $S_0$, we have
\beqs
\lim_{r\to 0}\frac{\Phi(r)}{r}&=& (\phi\det g)(0)\lim_{r\to 0}\frac{1}{r}\int_{\td B_r}-4\b v du\wedge d\b u\wedge dv\wedge d\b v\\
& &+\frac{\partial}{\partial v_1}(\phi\det g)(0)\int_{S_0} |v|^2(\b v du\wedge d\b u\wedge dv +2\b u du\wedge dv\wedge d\b v),
\eeqs
where $\td B_r$ is the image of $B_r$ under the coordinate change. Next we evaluate the second integral. Since under the degree 2 map $(u,v)\mapsto (au,-v^2)$, the surface $S_0$ becomes
 $$\td S=\{(s,t)|Q_0(s,t)=1\}.$$
 So we have
 \beqs &&
 \int_{S_0} |v|^2(\b v du\wedge d\b u\wedge dv +2\b u du\wedge dv\wedge d\b v)\\&=&
 2\frac{1}{2|a|^2} \int_{\td S}\b t ds\wedge d\b s\wedge dt +\b s ds\wedge dt\wedge d\b t\\
 &=& -\frac{2}{|a|^2}\int_{Q_0(s,t)\leq 1} ds\wedge d\b s \wedge dt\wedge d\b t\\
 &=& \frac{2}{|a|^2\det g(0)}\int_{|s|^2+|t|^2\leq 1} (\sqrt{-1})^2 ds\wedge d\b s \wedge dt\wedge d\b t\\
 &=& \frac{4\pi^2}{|a|^2\det g(0)}.
 \eeqs
For the first limit we have the following lemma:

\begin{lem}\label{lem:appendix}
We have
  $$\lim_{r\to 0}\frac{1}{r}\int_{\td B_r}\b v du\wedge d\b u\wedge dv\wedge d\b v =
  \pi^2\Big(\frac{\pd {}{v_1}(\det g)(0)}{|a|^2(\det g (0))^2}+\frac{\b a}{|a|^4\det g(0)}\Big)$$
\end{lem}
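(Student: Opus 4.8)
The plan is to regard $\td B_r$, after the weighted scaling $(u_1,v_1)=(r^2u,rv)$, as a smooth one-parameter family of bounded domains in $\CC^2$ converging to the model domain $\td B_0=\{F_0\le 1\}$, where $F_0=\lim_{r\to 0}F_r$ is the Hermitian quadratic form of $g_{i\b j}(p_1)$ evaluated at $(au,-v^2)$, and to extract the coefficient of $r$ in $\int_{\td B_r}\b v\,du\wedge d\b u\wedge dv\wedge d\b v$ by a first-variation-of-domain argument. First I would Taylor-expand $F_r=F_0+rF_1+O(r^2)$; since $g^{(r)}_{i\b j}(u,v)=g_{i\b j}(r^2u,rv)$, the $r^2u$ slot contributes only at order $r^2$, so $F_1$ is an explicit quadratic polynomial in $u,\b u,v,\b v$ whose coefficients involve only $g_{i\b j}(0)$, the number $a$, and the derivatives $\p_{v_1}g_{i\b j}(0)$, $\p_{\b v_1}g_{i\b j}(0)$ --- the $\p_{u_1}$-derivatives never appear. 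With the degenerate-case normal form $\td X=u_1(a+v_1)\pd{}{u_1}-v_1^2\pd{}{v_1}$ this expansion is purely mechanical.

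Since $\td B_0$ is invariant under $(u,v)\mapsto(u,-v)$, which reverses $\b v\,du\wedge d\b u\wedge dv\wedge d\b v$, we have $\int_{\td B_0}\b v\,du\wedge d\b u\wedge dv\wedge d\b v=0$, so $\int_{\td B_r}(\cdots)=O(r)$ and its leading term is the first variation. I would compute this by introducing weighted-radial coordinates $(u,v)=(\rho^2\hat u,\rho\hat v)$ with $(\hat u,\hat v)\in\{F_0=1\}$ and $\rho>0$, available because $F_0$ is positive and weighted-homogeneous of degree $4$: then $\td B_r=\{\rho\le R_r(\hat u,\hat v)\}$ with $R_0\equiv1$ and $R_r=1-\tfrac r4F_1+O(r^2)$ --- differentiate $F_r(R_r^2\hat u,R_r\hat v)=1$ in $r$ at $r=0$ and use $\tfrac{d}{d\rho}F_0(\rho^2\hat u,\rho\hat v)\big|_{\rho=1}=4$ --- and a one-line computation in these coordinates gives
$$\lim_{r\to 0}\frac1r\int_{\td B_r}\b v\,du\wedge d\b u\wedge dv\wedge d\b v=-\int_{\{F_0=1\}}\b v\,F_1\,\gamma,$$
where $\gamma$ is the $3$-form on $\{F_0=1\}$ characterized by $dF_0\wedge\gamma=du\wedge d\b u\wedge dv\wedge d\b v$.

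The remaining ingredient is symmetry. Besides $(u,v)\mapsto(u,-v)$, both $\{F_0=1\}$ and $\gamma$ are invariant under the circle action $(u,v)\mapsto(e^{2\i\theta}u,e^{\i\theta}v)$, since $F_0$ sees only the combination $(au,-v^2)$ and the action multiplies this by $e^{2\i\theta}$; assigning weights $2,-2,1,-1$ to $u,\b u,v,\b v$, any monomial of nonzero weight integrates to zero against $\gamma$ over $\{F_0=1\}$. One checks that the surviving part of $\b v\,F_1$ is $|v|^2$ times $P(au,-v^2,\b a\b u,-\b v^2)$ for an explicit Hermitian quadratic $P(s,t,\b s,\b t)$ with coefficients built from $\p_{v_1}g_{i\b j}(0)$, $g_{i\b j}(0)$, and $a$. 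I would then push forward along the degree-$2$ map $(u,v)\mapsto(s,t)=(au,-v^2)$ exactly as in the evaluation of the ``second integral'' above: the factor $|v|^2$ cancels the Jacobian factor in $du\wedge d\b u\wedge dv\wedge d\b v=\tfrac{1}{4|a|^2|v|^2}\,ds\wedge d\b s\wedge dt\wedge d\b t$, reducing the problem to $\tfrac1{2|a|^2}\int_{\{Q_0=1\}}P\,\gamma'$; then diagonalize the Hermitian form $Q_0$ by a $\CC$-linear substitution to reach the round sphere $\{|\tilde s|^2+|\tilde t|^2=1\}$, over which the relevant monomial integrals of $P\,\gamma'$ are elementary (they are linear in the entries of $(g_{i\b j}(0))^{-1}=\tfrac1{\det g(0)}\cdot(\text{adjugate})$, with the constant $\int_{\{|\tilde s|^2+|\tilde t|^2=1\}}\tilde\gamma=-4\pi^2$). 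Collecting the survivors --- the $\p_{v_1}g_{i\b j}$-terms organize into $\p_{v_1}(\det g)(0)$ and the remaining terms into $\det g(0)/a$, and $\tfrac1{a|a|^2}=\tfrac{\b a}{|a|^4}$ --- gives precisely the stated formula.

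I expect the main obstacle to be the bookkeeping in the third step: writing out $F_1$ correctly, then tracking which of the many monomials of $\b v\,F_1$ have weight zero, and verifying that the survivors assemble into $\p_{v_1}(\det g)(0)$ rather than into $\p_{v_1}(g_{1\b1}g_{2\b2}-g_{1\b2}^2)(0)$ --- this last point is delicate and hinges on getting the Hermitian conventions for $Q_0$ and its diagonalization exactly right, where a single index swap would corrupt the final answer. Everything else is elementary calculus, in the same spirit as the proof of Lemma~\ref{lem:main} above.
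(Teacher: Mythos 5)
Your plan is sound and reaches the stated constant by a genuinely different route than the paper. The paper fixes the domain: it introduces the $r$-dependent (non-holomorphic) change of variables $s=\sqrt{g^{(r)}_{1\b1}}\,u(a+rv)-\frac{g^{(r)}_{2\b1}}{\sqrt{g^{(r)}_{1\b1}}}v^2$, $t=\big(\det g^{(r)}/g^{(r)}_{1\b1}\big)^{1/4}v$, which maps $\td B_r$ onto the fixed model domain $\Om_0=\{|s|^2+|t|^4\le 1\}$, then expands the Jacobian factor $\big|\pd ts\big|^2\big|\pd su\big|^2$ (more precisely $\big|\pd tv\big|^2\big|\pd su\big|^2$) to first order in $r$ and evaluates the resulting fixed-domain integrals, ending with the same degree-two map $(s,t)\mapsto(s,t^2)$ to the round ball that gives the factor $\pi^2$. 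You instead keep the coordinates fixed and vary the domain: the vanishing of the $r^0$ term by the $(u,v)\mapsto(u,-v)$ symmetry, the Hadamard first-variation formula with the Leray form ($dF_0\wedge\gamma=\omega$, which correctly absorbs the factor $1/4$ coming from the weighted homogeneity of $F_0$), the weight selection under $(u,v)\mapsto(e^{2\i\theta}u,e^{\i\theta}v)$, and the push-forward under $(u,v)\mapsto(au,-v^2)$ with the $|v|^2$ cancelling the Jacobian all check out; in particular your weight count correctly explains why only $\p_{v_1}g_{i\b j}(0)$ (and not $\p_{\b v_1}$) and the $\b a$-term survive, matching the shape of the stated answer. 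What your route buys is that the messy Jacobian expansion of the paper's transformation is replaced by the single polynomial $F_1$ and a boundary integral over an ellipsoid; what it costs is that the final identity still rests on the coefficient bookkeeping you defer (assembling the surviving $\p_{v_1}g_{i\b j}(0)$-terms into $\p_{v_1}(\det g)(0)/(\det g(0))^2$), which is exactly the part where the paper does the explicit work, so your argument is a correct strategy rather than a completed verification. One small slip in the write-up: $F_1$ is not quadratic but weighted-homogeneous of degree $5$ (it contains e.g. $v|v|^4$); this does not affect any step of your argument.
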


$\ $\newline

Now combining the above results, we get Lemma \ref{lem:main}.
\end{proof}

\begin{proof}[Proof of Lemma \ref{lem:appendix}:]
  To compute the integral
$$\int_{\td B_r}\b v du\wedge d\b u\wedge dv\wedge d\b v ,$$
where $\td B_r$ is given by
$$\td B_r=\Big\{(u,v)\,\Big|\, g_{1\b 1}^{(r)}|u(a+rv)|^2-g^{(r)}_{1\b 2}u(a+rv)\b v^2-g^{(r)}_{2\b 1}\overline{u(a+rv)} v^2+g^{(r)}_{2\b 2}|v|^4\leq 1\Big\}.$$
Consider the following differentiable coordinate transformation:
$$
s= \sqrt{g_{1\b 1}^{(r)}}u(a+rv)-\frac{g^{(r)}_{2\b 1}}{\sqrt{g_{1\b
1}^{(r)}}}v^2,\quad
t= \Big(\frac{\det g^{(r)}}{g_{1\b 1}^{(r)}}\Big)^{\frac{1}{4}}v.
$$
Then the domain $\td B_r$ is transformed to $\Om_0=\{(s,t)||s|^2+|t|^4\leq 1\}$. Direct computation shows that
$$\pd{t}{u}=O(r^2),\quad \pd{t}{\b u}=O(r^2).$$
So we have
$$dt\wedge d\b t=\Big(\Big|\pd{t}{v}\Big|^2-\Big|\pd{t}{\b v}\Big|^2\Big) dv\wedge d\b v+O(r^2).$$
Note that $\pd{t}{\b v}=O(r)$,   we have
$$dt\wedge d\b t=\Big|\pd{t}{v}\Big|^2dv\wedge d\b v+O(r^2).$$
It follows that
\beqs ds\wedge d\b s\wedge dt\wedge d\b t&=&\Big(\Big|\pd{t}{v}\Big|^2\Big(\Big|\pd{s}{u}\Big|^2-\Big|\pd{s}{\b u}\Big|^2\Big)+O(r^2)\Big)du\wedge d\b u\wedge dv\wedge d\b v\\
&=& \Big(\Big|\pd{t}{v}\Big|^2\Big|\pd{s}{u}\Big|^2+O(r^2)\Big)du\wedge d\b u\wedge dv\wedge d\b
v,
\eeqs
where we used $\pd{s}{\b u}=O(r^2)$ in the last inequality.\\

Now we compute $\Big|\pd{t}{v}\Big|^2\Big|\pd{s}{u}\Big|^2$. We have
\beqs
\pd{s}{u}=(a+rv)\sqrt{g^{(r)}_{1\b 1}}+O(r^2),
\eeqs
So
\beqs
\Big|\pd{s}{u}\Big|^2&=& (|a|^2+rv\b a+r\b v a)\Big(g_{1\b 1}(0)+rv\pd{g_{1\b 1}}{v_1}(0)+r\b v\pd{g_{1\b 1}}{\b v_1}(0)\Big)+O(r^2)\\
&=& |a|^2g_{1\b 1}(0)+rv\Big(|a|^2\pd{g_{1\b 1}}{v_1}(0)+\b a g_{1\b 1}(0)\Big)
+r\b v\Big(|a|^2\pd{g_{1\b 1}}{\b v_1}(0)+ a g_{1\b 1}(0)\Big)+O(r^2).
\eeqs
Similarly, we have
\beqs
\Big|\pd{t}{v}\Big|^2&=& \Big(\frac{\det g(0)}{g_{1\b 1}(0)}\Big)^{-\frac{3}{2}}
\Big(
\Big(\frac{\det g(0)}{g_{1\b 1}(0)}\Big)^2+\frac{3}{4}rv \frac{\det g(0)}{g_{1\b 1}(0)}\pd{}{v_1}\Big(\frac{\det g}{g_{1\b 1}}\Big)(0)\\
& & +\frac{3}{4}r\b v \frac{\det g(0)}{g_{1\b 1}(0)}\pd{}{\b v_1}\Big(\frac{\det g}{g_{1\b 1}}\Big)(0)\Big)+O(r^2).
\eeqs
So we get
\beq
\Big|\pd{t}{v}\Big|^2\Big|\pd{s}{u}\Big|^2=
\Big(\frac{\det g(0)}{g_{1\b 1}(0)}\Big)^{-\frac{3}{2}}(C_0+rvC_1+r\b v C_2)+O(r^2),
\label{eqA1}\eeq
where
\beq
C_0=|a|^2\frac{(\det g(0))^2}{g_{1\b 1}(0)}
\label{eqA2}\eeq
and
\beq
C_1=\frac{\frac{3}{4}|a|^2g_{1\b 1}(0)\det g(0)\pd{\det g}{v_1}(0)+(\det g(0))^2(\frac{1}{4}|a|^2\pd{g_{1\b 1}}{v_1}(0)+\b a g_{1\b 1}(0))}{(g_{1\b 1}(0))^2}.
\label{eqA3}\eeq
Using (\ref{eqA1}), we have
\beqs &&
\lim_{r\to 0}\frac{1}{r}\int_{\td B_r}\b v du\wedge d\b u\wedge dv\wedge d\b v \\&=&
\lim_{r\to 0}\frac{1}{r}\Big(\frac{\det g(0)}{g_{1\b 1}(0)}\Big)^{\frac{3}{2}}
\int_{\Om_0}\Big(\frac{\det g}{g_{1\b 1}}\Big)^{-\frac{1}{4}}
\frac{\b t ds\wedge d\b s\wedge dt\wedge d\b t}{C_0+rvC_1+r\b v C_2+O(r^2)}\\
&=&\Big(\frac{\det g(0)}{g_{1\b 1}(0)}\Big)^{\frac{3}{2}}\lim_{r\to 0}
\frac{1}{r}\int_{\Om_0}
\Big(\Big(\frac{\det g}{g_{1\b 1}}(0)\Big)^{-\frac{1}{4}}-\frac{rt}{4}\Big(\frac{\det g}{g_{1\b 1}}(0)\Big)^{-\frac{3}{2}}\pd{}{v_1}\Big(\frac{\det g}{g_{1\b 1}}\Big)(0)\Big)\\
& &\cdot\frac{1}{C_0}\Big(1-rt\frac{C_1}{C_0}\Big(\frac{\det g}{g_{1\b 1}}(0)\Big)^{-\frac{1}{4}}\Big)\;\b t ds\wedge d\b s\wedge dt\wedge d\b t\\
&=&\Big(\frac{\det g(0)}{g_{1\b 1}(0)}\Big)^{\frac{3}{2}}
\Big(\frac{1}{4C_0}\Big(\frac{\det g}{g_{1\b 1}}(0)\Big)^{-\frac{3}{2}}\pd{}{v_1}\Big(\frac{\det g}{g_{1\b 1}}\Big)(0)+\frac{C_1}{C_0^2}\Big(\frac{\det g(0)}{g_{1\b
1}(0)}\Big)^{-\frac{1}{2}}\Big)
\int_{\Om_0}(-1)|t|^2ds\wedge d\b s\wedge dt\wedge d\b t.
\eeqs
Now using the 2-1 mapping $(s,t)\mapsto (s,t^2)$, we have
\beqs
\int_{\Om_0}(-1)|t|^2ds\wedge d\b s\wedge dt\wedge d\b t=
\frac{1}{2}\int_{|s|^2+|t|^2\leq 1} (-1)ds\wedge d\b s\wedge dt\wedge d\b t= \pi^2.
\eeqs
Finally using (\ref{eqA2}) and (\ref{eqA3}), we get
$$\lim_{r\to 0}\frac{1}{r}\int_{\td B_r}\b v du\wedge d\b u\wedge dv\wedge d\b v =
  \pi^2\Big(\frac{\pd {}{v_1}(\det g)(0)}{|a|^2(\det g (0))^2}+\frac{\b a}{|a|^4\det
  g(0)}\Big).$$\\

\end{proof}

\begin{rem}We can also prove Lemma \ref{lem:main} by a direct method without using
Poincar\'e's theorem, but the calculation is much more complicated.

\end{rem}

\section{Blowing up at non-isolated zeroes}\label{sec4}
\subsection{The zero set of holomorphic vector fields}
In this section, we consider the non-isolated case. Let $Z_{\la}$ be
a one dimensional component of the zero set of $X$ on a K\"ahler
surface $M$. We choose a coordinate $(z, w)$ on a neighborhood $U$
of $p$ such that $Z_{\la}\cap U=\{z=0\}.$ Therefore, $X$ can be
locally written as
$$X=z\cdot h(z, w)\pd {}z+z\cdot k(z, w)\pd {}{w},$$
where $h(z, w)$ and $k(z, w)$ are holomorphic functions on $U.$
Since $X$ is non-degenerate at $Z_{\la},$ we have
$$h(0, w)\neq 0,\quad (0, w)\in U$$
and we can assume that \beqs h(z, w)&=& a_0+a_1z+a_2w+\sum_{i+j\geq
2}a_{ij}z^iw^j,\quad a_0\neq 0,\\
k(z, w)&=&b_0+b_1z+b_2w+\sum_{i+j\geq 2}b_{ij}z^iw^j. \eeqs

 Let
$\pi: \td M\ri M$ be the blowing up of $M$ at the point $p$. We denote by $L$ the strict transform
of $Z_\la$ under $\pi$, and by $Z$ the zero locus of $\td X$ over $\td M$. Then obviously $L\subset Z$. Now we study the zeroes of $\td X$ on the exceptional divisor $E$. Choose coordinate charts $\td U_1$ and $\td U_2$ of $\td U=\pi^{-1}(U)$
as in Section 3: \beqs \td U_1&=&\{((z, w), [\zeta,
\eta])|\,z\eta=w\zeta,\;\zeta\neq 0\}\subset U\times \CC\PP^1,\\
 \td U_2&=&\{((z, w), [\zeta,
\eta])|\,z\eta=w\zeta,\;\eta\neq 0\}\subset U\times \CC\PP^1.\eeqs
We choose coordinates $u_1=z, v_1=\frac {\eta}{\zeta}$ on $\td U_1$
and we have $E\cap \td U_1=\{u_1=0\} $ and $L\cap \td
U_1=\emptyset.$
 On $\td U_1$ $\td X:=\pi^*X$ can be written as
$$\td X=\td X^1(u_1, v_1)\pd {}{u_1}+\td X^2(u_1, v_1)\pd {}{v_1},$$
where $\td X^1$ and $\td X^2$ are given by \beqs \td
X^1&=&a_0u_1+a_1u_1^2+a_2u_1^2v_1+\sum_{i+j\geq 2}a_{ij}u_1^{i+j+1}v_1^j,\\
\td
X^2&=&b_0+b_1u_1-a_0v_1+(b_2-a_1)u_1v_1-a_2u_1v_1^2+\sum_{i+j\geq
2}\,(b_{ij}v_1^j-a_{ij}v_1^{j+1})u_1^{i+j}. \eeqs The zero set of
$\td X$ on $\td U_1$ is given by
 $$Z\cap \td U_1=\{p_1\},\quad p_1:(u_1, v_1)=(0, \frac {b_0}{a_0})$$
which is a non-degenerate zero of $\td X.$

On the other hand, we
choose coordinates $u_2=\frac {\zeta}{\eta}, v_2=w$ on $\td U_2$ and
we have $E\cap \td U_2=\{v_2=0\}$ and $L\cap \td U_2=\{u_2=0\}.$
Note that
 $\td
X$ can be written as
$$\td X=\hat X^1(u_2, v_2)\pd {}{u_2}+\hat X^2(u_2, v_2)\pd{}{v_2},$$
where \beqs \hat X^1(u_2,
v_2)&=&a_0u_2-b_0u_2^2+a_2u_2v_2+(a_1-b_2)u_2^2v_2-b_1u_2^3v_2+\sum_{i+j\geq
2}(a_{ij}-b_{ij}u_2)u_2^{i+1}v_2^{i+j},\\\hat X^2(u_2,
v_2)&=&u_2v_2\Big(b_0+b_1u_2v_2+b_2v_2+\sum_{i+j\geq
2}\,u_2^iv_2^{i+j}\Big).
 \eeqs Therefore, the zero set of $\td X$ on $\td U_2$
 consists of the
 following cases:
 \begin{itemize}
   \item If $b_0= 0$, then $Z\cap \td U_2=L\cap \td U_2;$
   \item If $b_0\neq 0,$ then
   $Z\cap \td U_2=(L\cap \td U_2)\cup\{q_1\}$, where $q_1:(u_2, v_2)=( \frac {a_0}{b_0},
   0).$ One can check easily that $q_1=p_1\in \td U_1$.\\
 \end{itemize}

Combining the above results, we have
\begin{lem}\label{lemzero2}The zero set $Z$ of $\td X$ on $\td U$ is given by
$Z\cap \td U=(L\cap \td U)\cup\{p_1\}$, where $p_1\in E$ is the point $((0,0),[a_0,b_0])\in U\times \CC\PP^1$.

\end{lem}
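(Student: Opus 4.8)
The plan is to prove the lemma chart by chart, using the explicit local expressions for $\td X=\pi^*X$ already derived in the two coordinate charts $\td U_1,\td U_2$, and then matching the two answers on the overlap. A convenient reduction at the outset: $\pi$ restricts to a biholomorphism $\td U\setminus E\to U\setminus\{p\}$, and by hypothesis $\zero(X)\cap U=\{z=0\}$, so $\zero(\td X)\cap(\td U\setminus E)=\pi^{-1}(\{z=0\}\cap U\setminus\{p\})=L\setminus E$. By continuity $\td X$ then vanishes on all of $L\cap\td U$, so everything comes down to locating the zeroes of $\td X$ that lie on the exceptional divisor $E$.

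On $\td U_1$ one has $L\cap\td U_1=\emptyset$ and $E\cap\td U_1=\{u_1=0\}$. Restricting the local formulas for $(\td X^1,\td X^2)$ to $\{u_1=0\}$ gives $\td X^1|_{u_1=0}=0$ and $\td X^2|_{u_1=0}=b_0-a_0v_1$, so the unique zero on $E\cap\td U_1$ is $p_1:(u_1,v_1)=(0,b_0/a_0)$. Differentiating $(\td X^1,\td X^2)$ at $p_1$ produces a triangular matrix with diagonal entries $a_0$ and $-a_0$, both nonzero since $a_0=h(0,0)\neq0$; hence $p_1$ is an isolated non-degenerate zero of $\td X$ --- a point worth recording, since it is needed in the later Futaki computation.

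On $\td U_2$ one has $L\cap\td U_2=\{u_2=0\}$ and $E\cap\td U_2=\{v_2=0\}$, and $L\cap\td U_2\subset\zero(\td X)$ by the reduction above. Restricting to $\{v_2=0\}$ gives $\hat X^2|_{v_2=0}\equiv0$ and $\hat X^1|_{v_2=0}=a_0u_2-b_0u_2^2=u_2(a_0-b_0u_2)$, so the zeroes of $\td X$ on $E\cap\td U_2$ are $u_2=0$ (already on $L$) and, when $b_0\neq0$, the point $q_1:(u_2,v_2)=(a_0/b_0,0)$. The transition relation $u_2=1/v_1$ on $E$ identifies $q_1$ with the point $p_1$ found in $\td U_1$, both being $((0,0),[a_0,b_0])$; and when $b_0=0$ the zero at $v_1=0$ in $\td U_1$ is the point $((0,0),[1,0])=((0,0),[a_0,0])$, which lies outside $\td U_2$, consistently with $\td U_2$ contributing no extra zero in that case. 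Combining the two charts with the initial reduction yields $\zero(\td X)\cap\td U=(L\cap\td U)\cup\{p_1\}$ with $p_1=((0,0),[a_0,b_0])$.

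The computations here are entirely mechanical: expanding $\pi^*X$ in each chart (already done) and reading off the roots of the linear polynomials $b_0-a_0v_1$ and $a_0-b_0u_2$. The only step requiring care --- and the one I would flag as the main, albeit elementary, obstacle --- is the bookkeeping across $\td U_1\cap\td U_2$: one must check that the single zero visible in $\td U_1$ and the possibly-absent extra zero visible in $\td U_2$ glue to one and the same point of $E$, and handle the borderline case $b_0=0$, in which that zero sits on the chart boundary $\{v_1=0\}$.
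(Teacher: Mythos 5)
Your proof is correct and follows essentially the same route as the paper: restrict the already-derived chart expressions of $\td X$ to $E$ in $\td U_1$ and $\td U_2$, read off the roots of $b_0-a_0v_1$ and $u_2(a_0-b_0u_2)$, and identify $q_1$ with $p_1=((0,0),[a_0,b_0])$ across the overlap, including the $b_0=0$ case. Your added remarks (the reduction off $E$ via the biholomorphism plus continuity, and the non-degeneracy of $p_1$ from the triangular linearization with eigenvalues $a_0,-a_0$) are accurate and consistent with the paper's later use of $\td A_{p_1}=0$, $\td C_{p_1}=-a_0^2$.
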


\subsection{The local Futaki invariant}

In this section we will calculate the Futaki invariant of the
blow-up $\pi: \td M\ri M$ of K\"ahler surface $M$ at a point $p\in
l$ where $l$ is a one-dimensional component of the set of $X$.
  We assume that $X$ is
non-degenerate on $l$. As before  $X$ can be naturally extended to a
holomorphic vector field $\td X$ on $\td M$. We would like to
compute the Futaki invariant of $(\td \Om_{\ee}, \td X)$ on $\td M$
where $\td
\Om_{\ee}=\pi^*\Om-\ee c_1( [E])$, where $E=\pi^{-1}(p)$ is the exceptional divisor. \\

Let $\zero(X)=\cup_{\la\in \La}Z_{\la}$ be the zero set of $X$ on
$M$ and $Z_0=l$ where $l$ is the curve containing the point $p$ as
above. Let $L=\pi^*l-E$ be the strict transform of $l$. Then $\td X$
vanishes on $L$ and the zero set $\td Z$ of $\td X$ on $\td M$ can be divided into three types according to Lemma \ref{lemzero2}: $\td Z=L\cup\{q\}\cup_{\la\in \La,\la\neq 0}\td Z_\la$, where $\td Z_\la$ is the strict transform of $Z_\la$, and $q\in E$ is an isolated zero point of $\td X$ which does not lie on $L$.
Let $\td \mu=\frac {c_1(\td M)\cdot \td \Om_{\ee}}{\td \Om_{\ee}^2}$
and we define
$$\dd:=\td \mu-\mu=-\frac 1{\Om^2}\ee+O(\ee^2).$$
With these notations, we have

\begin{lem}\label{lemblowup2}
$$f_{\td M}(\td \Om_{\ee}, \td X)=f_{M}(  \Om ,   X)-\frac 2{3}\dd J_{M}(  \Om ,   X)
+f_{q}(\td \Om_{\ee}, \td X)-\Big(2B_0-2
B_0^2A_0^{-1}(\mu+\dd)\Big) \ee-\frac 23A_0^{-2}B_0^3(\mu+\dd).$$

\end{lem}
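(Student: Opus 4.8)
The plan is to follow the pattern of Lemma~\ref{lemblowup}: decompose $f_{\td M}(\td\Om_\ee,\td X)$ as a sum of local Futaki invariants over the components of $\zero(\td X)$, which by Lemma~\ref{lemzero2} are the strict transform $L=\pi^*Z_0-E$, the isolated non-degenerate point $q$ ($=p_1$), and the strict transforms $\td Z_\la$ of the remaining components $Z_\la$ ($\la\in\La$, $\la\neq0$). For $\la\neq0$ the set $\td Z_\la$ is disjoint from $E$ and $\pi$ restricts there to a biholomorphism, so $A_\la,B_\la,C_\la$ and (when $\dim\td Z_\la=1$) $g(\td Z_\la)$ are unchanged, while $\td\Om_\ee([\td Z_\la])=\Om([Z_\la])$ and $c_1(\td M)([\td Z_\la])=c_1(M)([Z_\la])$ since $E\cdot\pi^*Z_\la=0$; hence $I_{\td Z_\la}(\td\Om_\ee,\td X)=I_{Z_\la}(\Om,X)$, $J_{\td Z_\la}=J_{Z_\la}$, and $f_{\td Z_\la}(\td\Om_\ee,\td X)=f_{Z_\la}(\Om,X)-\frac23\dd\,J_{Z_\la}(\Om,X)$. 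Summing and using $f_M(\Om,X)=f_{Z_0}(\Om,X)+\sum_{\la\neq0}f_{Z_\la}(\Om,X)$ and the analogous identity for $J_M$, I get $\sum_{\la\neq0}f_{\td Z_\la}=f_M(\Om,X)-f_{Z_0}(\Om,X)-\frac23\dd\big(J_M(\Om,X)-J_{Z_0}(\Om,X)\big)$.

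The main work is computing $f_L(\td\Om_\ee,\td X)$ from the $1$-dimensional formula of the Remark following Corollary~\ref{corfutaki}, for which I need four pieces of data along $L$. From the projection formula together with $E\cdot E=-1$ and $E\cdot\pi^*Z_0=0$ one gets $\td\Om_\ee([L])=\Om([Z_0])-\ee$ and $c_1(\td M)([L])=c_1(M)([Z_0])-1$, while $g(L)=g(Z_0)$ since $\pi|_L\colon L\to Z_0$ is an isomorphism. Next, $A_L=\tr(L_L(\td X))=A_0$: over $L\setminus E$ the map $\pi$ identifies $\td X$ with $X$ on $Z_0\setminus\{p\}$, so the (constant along a connected curve) transverse eigenvalues agree; equivalently this is read off from the local expression $\hat X^1=a_0u_2+\cdots$ of $\td X$ on $\td U_2$. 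Finally — and this is the essential point distinguishing the present case from the exceptional-divisor computation in Theorem~\ref{theo001}, where the potential picked up a term $-a_1\ee$ — the holomorphy potential acquires no $\ee$-correction along $L$: choosing the Griffiths--Harris metric $h$ on $[E]$ as in Theorem~\ref{theo001}, so that $\td\te_{\td X}=\pi^*\te_X-\ee\,\td X(\log|\si|_h^2)$, one checks from the local formulas for $\td X$ on $\td U_2$ and $|\si|_h^2=|v_2|^2(1+|u_2|^2)$ near $E$ that $\td X(\log|\si|_h^2)$ extends smoothly across $E$; since $\td X$ vanishes identically on $L$ it follows that $\td X(\log|\si|_h^2)|_L=0$, hence $B_L=\tr_{\td\Om_\ee}(\td X)_L=\pi^*\te_X|_L=B_0$.

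Substituting $(A_L,B_L,\td\Om_\ee([L]),c_1(\td M)([L]),g(L))=(A_0,B_0,\Om([Z_0])-\ee,c_1(M)([Z_0])-1,g(Z_0))$ into the $1$-dimensional formulas for $I_L$ and $J_L$ and subtracting the corresponding expressions for $Z_0\subset M$ yields $I_L(\td\Om_\ee,\td X)-I_{Z_0}(\Om,X)=-2B_0\ee$ and $J_L(\td\Om_\ee,\td X)-J_{Z_0}(\Om,X)=A_0^{-2}B_0^3-3A_0^{-1}B_0^2\ee$. Writing $f_L=I_L-\frac23\td\mu\,J_L$, $f_{Z_0}=I_{Z_0}-\frac23\mu\,J_{Z_0}$ and $\td\mu=\mu+\dd$, one rearranges
$$f_L-f_{Z_0}+\tfrac23\dd\,J_{Z_0}=(I_L-I_{Z_0})-\tfrac23(\mu+\dd)(J_L-J_{Z_0}).$$
Assembling $f_{\td M}(\td\Om_\ee,\td X)=\sum_{\la\neq0}f_{\td Z_\la}+f_L+f_q$ with the identity of the first paragraph, the $f_{Z_0}$ and $\frac23\dd\,J_{Z_0}$ terms cancel, leaving
$$f_{\td M}(\td\Om_\ee,\td X)=f_M(\Om,X)-\tfrac23\dd\,J_M(\Om,X)+f_q(\td\Om_\ee,\td X)+(I_L-I_{Z_0})-\tfrac23(\mu+\dd)(J_L-J_{Z_0}),$$
into which inserting the two differences above gives precisely the claimed identity. (If $b_0=0$ the point $q$ does not occur and one sets $f_q=0$.)

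I expect the only delicate step to be the identity $B_L=B_0$: it requires setting up the explicit metric on $[E]$ and noticing that, unlike on the exceptional divisor in Theorem~\ref{theo001}, the correction $\td X(\log|\si|_h^2)$ vanishes on $L$ because $L$ lies entirely in $\zero(\td X)$. The remaining ingredients — the intersection numbers, the constancy of $A_L$, and the final algebraic rearrangement — are routine.
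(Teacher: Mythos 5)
Your proof is correct and is essentially the paper's: the same decomposition of the zero set of $\td X$ into $L$, $q$ and the components away from $E$, and the same use of the one-dimensional localization formula with the shifted data $\td\Om_\ee([L])=\Om([l])-\ee$, $c_1(\td M)([L])=c_1(M)([l])-1$, $g(L)=g(l)$, $A_L=A_0$, $B_L=B_0$; whether one substitutes these directly into Corollary \ref{corfutaki} and expands in $\dd$ (as the paper does) or forms the differences $I_L-I_{Z_0}$, $J_L-J_{Z_0}$ (as you do) is only cosmetic, and your explicit check of $B_L=B_0$ via the Griffiths--Harris metric makes precise what the paper uses implicitly. One small correction: the closing parenthetical about $b_0=0$ is wrong. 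By Lemma \ref{lemzero2} the isolated zero $q=p_1=((0,0),[a_0,b_0])$ is present for every value of $b_0$; when $b_0=0$ it merely leaves the chart $\td U_2$ and sits at $(u_1,v_1)=(0,0)$ in $\td U_1$, where it is a nondegenerate zero with nonzero local contribution (this is exactly the point whose invariants are computed in the proof of Theorem \ref{theo:main2}), so one may never set $f_q=0$. Since your main computation carries $f_q$ symbolically, this slip does not affect the proof of the lemma as stated.
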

\begin{proof}The Futaki invariant of $(\td \Om_{\ee}, \td X)$ on $\td
M$ is given by
$$f_{\td M}(\td \Om_{\ee}, \td X)=\sum_{\la\in \La,\la\neq 0}f_{\td Z_{\la}}(\td \Om_{\ee}, \td X)
+f_L(\td \Om_{\ee}, \td X)+f_{q}(\td
\Om_{\ee}, \td X).$$
 Note that for any $\la\in \La, \la\neq 0$, we have \beqs
f_{\td Z_{\la}}(\td \Om_{\ee}, \td X)&=&I_{\td Z_{\la}}(\td
\Om_{\ee}, \td X)-\frac n{n+1}\td \mu
J_{\td Z_{\la}}(\td \Om_{\ee}, \td X)\\
&=&I_{  Z_{\la}}(  \Om ,   X)-\frac n{n+1}(\mu+\dd) J_{ Z_{\la}}(
\Om_{\ee},   X)\\
&=&f_{  Z_{\la}}(  \Om ,   X)-\frac n{n+1}\dd J_{ Z_{\la}}(
\Om_{\ee}, X),\eeqs where we used the fact that $I_{\td Z_{\la}}(\td
\Om_{\ee}, \td X)=I_{  Z_{\la}}(  \Om ,   X)$ and $J_{\td
Z_{\la}}(\td \Om_{\ee}, \td X)=J_{  Z_{\la}}(  \Om ,   X)$ since
$\td Z_{\la} $ and $E$ are disjoint for $\la\neq 0$.  Note that
$$\td \Om_{\ee}([L])=\Om([l])-\ee,\quad c_1(\td M)([L])=c_1(M)([l])-1,\quad g(L)=g(l).$$
Thus, using Corollary \ref{corfutaki} we have \beqn f_L(\td \Om_\ee, \td
X)&=& (2B_{0}-2\td \mu B_{0}^2A_{0}^{-1})\td \Om_{\ee}([L]) +(\frac
{2\td \mu}{3}A_{0}^{-2}B_{0}^3)c_1(\td
M)([L])+(A_{0}^{-1}B_{0}^2-\frac {2\td
\mu}{3}A_{0}^{-2}B_{0}^3)(2-2g(L))\nonumber\\
&=&f_l(\Om, X)-\frac 23\dd J_l(\Om, X)-\Big(2B_0-2
B_0^2A_0^{-1}(\mu+\dd)\Big) \ee-\frac 23A_0^{-2}B_0^3(\mu+\dd).
  \eeqn\\
Combine these formulas, we proved the lemma.\\\end{proof}

\begin{theo}\label{theo:main2}Let $\pi: \td M\ri M$ be the blow-up of $M$ at $p\in l$ where $l$ is
a one-dimensional component of the zero set of $X$. If $X$ is
non-degenerate on $M$, then we have
$$f_{\td M}(\td \Om_{\ee}, \td X)=f_M(\Om,   X)+\nu(\Om, X)\cdot \ee+O(\ee^2),$$
where $\nu(\Om, X)$ is given by
$$\nu(\Om, X)=-2\tr_{\Om}(X)(p)+\frac
{2}{3\Om^2}J_{M}(\Om, X).$$

\end{theo}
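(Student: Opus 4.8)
The plan is to deduce Theorem \ref{theo:main2} from Lemma \ref{lemblowup2}. That lemma already expresses $f_{\td M}(\td\Om_\ee,\td X)$ in terms of $f_M(\Om,X)$, $J_M(\Om,X)$, the (explicitly computed) contribution of the strict transform $L$ of $l$, and the single remaining unknown, the local Futaki invariant $f_q(\td\Om_\ee,\td X)$ at the isolated zero $q\in E$ created by the blow-up. By Lemma \ref{lemzero2}, $q$ is the point $((0,0),[a_0,b_0])$, and by the discussion of the preceding subsection it is a \emph{non-degenerate} zero of $\td X$, this being where the hypothesis that $X$ is non-degenerate on $M$ (so that $a_0=h(0,\cdot)\neq 0$) enters. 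Consequently we may evaluate $f_q(\td\Om_\ee,\td X)$ by the $\dim_\CC Z_\la=0$ case of Corollary \ref{corfutaki}, i.e. $f_q(\td\Om_\ee,\td X)=\big(\td A_q\td B_q^2-\frac{2}{3}(\mu+\dd)\td B_q^3\big)/\td C_q$ with $\td A_q=\tr(L_q(\td X))$, $\td B_q=\tr_{\td\Om_\ee}(\td X)(q)$, $\td C_q=\det(L_q(\td X))$, and with $\td\mu=\mu+\dd$.

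First I would compute the linear part. Working in the chart $\td U_1$ with coordinates $(u_1,v_1)$ around $q=(0,b_0/a_0)$ and using the expansions of $\td X^1,\td X^2$ recorded in the preceding subsection, the holomorphic Jacobian of $(\td X^1,\td X^2)$ at $q$ is lower triangular with diagonal entries $a_0$ and $-a_0$. Hence $\td A_q=0$ and $\td C_q=-a_0^2=-A_0^2$, where $A_0=\tr(L_0(X))=h(0,\cdot)=a_0$ is the constant, nowhere-vanishing value along $l$. The vanishing of $\td A_q$ is the structural point that makes the $\td A_q\td B_q^2$ term disappear.

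Next I would compute $\td B_q$ exactly as in the proof of Theorem \ref{theo001}: with the Griffiths--Harris Hermitian metric $h$ on $[E]$, the induced K\"ahler metric $\td\oo_\ee=\pi^*\oo_g+\ee\,\p\b\p\log h$ on $\td M$ has holomorphy potential $\td\te_{\td X}=\pi^*\te_X-\ee\,\td X\!\left(\log|\si|^2_h\right)$, and near $E$ one has $|\si|^2_h=|u_1|^2(1+|v_1|^2)$ as in (\ref{eq101}). Evaluating at $q$: $\pi^*\te_X(q)=\te_X(p)=\tr_\Om(X)_l=B_0$, while $\td X\!\left(\log|\si|^2_h\right)(q)=\frac{\td X^1}{u_1}\Big|_q+\td X^2\cdot\frac{\b v_1}{1+|v_1|^2}\Big|_q=a_0+0=A_0$, the second summand vanishing because $\td X(q)=0$. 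Therefore $\td B_q=B_0-A_0\ee$.

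Putting these together gives $f_q(\td\Om_\ee,\td X)=\frac{2(\mu+\dd)(B_0-A_0\ee)^3}{3A_0^2}=\frac{2(\mu+\dd)B_0^3}{3A_0^2}-\frac{2(\mu+\dd)B_0^2}{A_0}\,\ee+O(\ee^2)$. Substituting this into the formula of Lemma \ref{lemblowup2} and expanding to first order in $\ee$, the two terms $\pm\frac{2}{3}A_0^{-2}B_0^3(\mu+\dd)$ cancel and the two terms $\pm 2A_0^{-1}B_0^2(\mu+\dd)\ee$ cancel, leaving $f_{\td M}(\td\Om_\ee,\td X)=f_M(\Om,X)-\frac{2}{3}\dd J_M(\Om,X)-2B_0\,\ee+O(\ee^2)$. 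Finally, using $\dd=-\ee/\Om^2+O(\ee^2)$ and $B_0=\tr_\Om(X)(p)$ yields $\nu(\Om,X)=-2\tr_\Om(X)(p)+\frac{2}{3\Om^2}J_M(\Om,X)$, as claimed. I do not expect a real obstacle: unlike the degenerate case of Section \ref{sec3.3}, $\td X$ remains non-degenerate so Corollary \ref{corfutaki} applies without modification; the only care required is in confirming the non-degeneracy of $q$, in the metric computation of $\td B_q$, and in the bookkeeping of the complete cancellation of the $\mu$-dependent terms, which is where an arithmetic slip would be easiest to make.
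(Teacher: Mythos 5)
Your proposal is correct and takes essentially the same route as the paper's own proof: reduce via Lemma \ref{lemblowup2} to the single isolated zero $q\in E$, compute $\td A_q=0$, $\td B_q=\te_X(p)-a_0\ee$, $\td C_q=-a_0^2$, plug into the $\dim_{\CC}Z_\la=0$ case of Corollary \ref{corfutaki}, and observe the cancellation of the $\mu$-dependent terms. Your placement of $q$ at $(u_1,v_1)=(0,b_0/a_0)$ is in fact slightly more careful than the paper, which writes $(0,0)$ in this step, but this does not affect the computed local data or the conclusion.
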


\begin{proof}By Lemma \ref{lemzero2}, the zero set of $\td X$ on $\td U$ is given
by $(L\cap \td U)\cup\{p_1\}$ where $p_1:(u_1, v_1)=(0, 0).$ Note
that using the local expression of $\td X$ at $p_1$ we have
$$\td A_{p_1}=0,\quad \td B_{ p_1}=\theta_X(p)-a_0\ee,\quad \td C_{ p_1}=-a_0^2. $$
Thus, we have
$$f_{ p_1}(\td \Om_{\ee}, \td X)=\frac {2(\mu+\dd)(\te_X(p)-a_0\ee)^3}{3a_0^2}.$$
Combining this with Lemma \ref{lemblowup2}, we have \beqs f_{\td
M}(\td \Om_{\ee}, \td X)&=&f_{M}(  \Om ,   X)-\frac 2{3}\dd J_{M}(
\Om
, X)+\frac {2(\mu+\dd)(\te_X(p)-a_0\ee)^3}{3a_0^2}\\
&&-\Big(2\te_X(p)-\frac {2\mu\te_X^2(p)}{a_0}\Big)\ee-\frac
{2\te_X^3(p)\mu}{3a_0^2}-\frac {2\te_X^3(p)\dd}{3a_0^2}+O(\ee^2)\\
&=&f_{M}(  \Om ,   X)-\frac 2{3}\dd J_{M}( \Om ,
X)-2\theta_X(p)\ee+O(\ee^2). \eeqs The theorem is proved.

\end{proof}

\section{Examples}\label{secEg}

In this section, we apply our theorem to some examples. Actually, we can get very explicit formulas as we do in the proof our theorems. For simplicity, we only write down the first order expansion, which in many cases suffices to prove the non-existence of cscK metrics.

\subsection{$\CC\PP^1\times \CC\PP^1$ blowing up two points}

 Let $M=\CC\PP^1\times\CC\PP^1$, and $p_1=(0,0),p_2=(\infty,\infty)$ are two points of $M$. We blow up $p_1,p_2$ to get $\td M_1$ and denote the blowing up map by $\pi$, with
exceptional divisors $E_1$ and $E_2$. $\td M_1$ can also be realized as $\CC\PP^2$ blowing up three generic points, and is a toric Fano manifold. Write the homogeneous
coordinates of the two factors of $\CC\PP^1\times\CC\PP^1$ by $[z_0,
z_1]$ and $[w_0, w_1]$.Let $z=\frac {z_1}{z_0}=\frac 1{\td z}$ and $w=\frac
{w_1}{w_0}=\frac 1{ \td w}.$  Then one can check easily that the
vector fields $z\pd{}{z}$ and $w\pd{}{w}$
extend naturally to holomorphic vector fields on $\td M_1$ and they form a
 basis of $\frak h_0(\td M_1)$. We denote these two vector fields by  $Z$ and $W$
 respectively.

 Any K\"ahler class on $M$ has the form $\Om_{a,b}=ac_1([H_1])+bc_1([H_2])$, $a,b>0$, where $H_1, H_2$ are divisors in
$M$ defined by $z=const$ and $w=const$, respectively. Since any K\"ahler class on $M$ admits a cscK metric, we know that the Futaki invariant vanishes identically on $\frak h(M)$. Now we consider the K\"ahler class on $\td M_1$:
$$\td \Om_{a,b,\ee_1,\ee_2}=\pi^*\Om_{a,b}-\ee_1c_1([E_1])-\ee_2c_1([E_2]).$$
We want to compute $f_{\td M_1}(\td \Om_{a,b,\ee_1,\ee_2},Z):=f_Z(a,b,\ee_1,\ee_2)$. (The computation of $f_W$ is the same.)

We choose the following K\"ahler form in the class $\Om_{a,b}$ on $M$:
$$\omega=a\Big(\partial\bar\partial\log(|z_0|^2+|z_1|^2)\Big)+b\Big(\partial\bar\partial\log(|w_0|^2+|w_1|^2)\Big),$$
then we can choose the holomorphy potential of $Z$ to be:
$$\theta_Z=-a\frac{|z_1|^2}{|z_0|^2+|z_1|^2}.$$
By symmetry, we have $\underline{\te_Z}=-\frac{a}{2}$. We have the following:
$$\nu_{p_1}=-2(\te_Z-\underline{\te_Z})(p_1)=-2(0-(-\frac{a}{2}))=-a,\quad \nu_{p_2}=-2(\te_Z-\underline{\te_Z})(p_2)=-2(-a-(-\frac{a}{2}))=a.$$
By Theorem \ref{theo:main2}, we have
$f_Z(a,b,\ee_1,\ee_2)=a(\ee_2-\ee_1)+o(|\ee|).$

\begin{cor}
  Let $\td M_1$ and $\td \Om_{a,b,\ee_1,\ee_2}$ be as above. Then for small $\ee_i>0$ with $\ee_1\neq\ee_2$, there are no cscK metrics in the class $\td \Om_{a,b,\ee_1,\ee_2}$.
\end{cor}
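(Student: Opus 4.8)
The plan is to derive the non-existence statement directly from the first-order expansion of the Futaki invariant, using the classical fact that the vanishing of $f_{\td M_1}(\td\Om_{a,b,\ee_1,\ee_2},\cdot)$ on all of $\frak h_0(\td M_1)$ is a necessary condition for the existence of a cscK metric in the class $\td\Om_{a,b,\ee_1,\ee_2}$. Thus it suffices to exhibit a single holomorphic vector field on which this Futaki invariant does not vanish, and the natural candidate is $Z=z\,\partial/\partial z$ (the computation for $W$ being identical by symmetry). This is exactly the mechanism of Corollary \ref{cor:main}.

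First I would check that the hypotheses of Theorem \ref{theo:main1} are met. The zero locus $\zero(Z)$ on $M=\CC\PP^1\times\CC\PP^1$ is the disjoint union of the two smooth curves $\{z=0\}\times\CC\PP^1$ and $\{z=\infty\}\times\CC\PP^1$, and along each of them $Z$ acts on the normal direction by multiplication by a nonzero constant, so $Z$ is non-degenerate on $M$ in the sense of Section \ref{sec2}. Moreover $p_1=(0,0)$ and $p_2=(\infty,\infty)$ lie on two \emph{distinct} one-dimensional components of $\zero(Z)$, so by Lemma \ref{lemzero2} the two blow-ups behave independently and their local contributions simply add up — this is why Theorem \ref{theo:main1} (whose proof assembles the per-point contributions obtained via Theorem \ref{theo:main2}) applies here.

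Next I would compute the two ingredients of the expansion. Since every K\"ahler class on $M$ carries a cscK metric (a product of round metrics in the class $\Om_{a,b}$), the Futaki invariant $f_M(\Om_{a,b},Z)$ vanishes. For the slope coefficients $\nu_{p_i}$, I would use the holomorphy potential $\theta_Z=-a\,|z_1|^2/(|z_0|^2+|z_1|^2)$ relative to the product Fubini--Study form representing $\Om_{a,b}$; by symmetry $\underline{\theta_Z}=-a/2$, and by Lemma \ref{lem:potential} the quantity $(\theta_Z-\underline{\theta_Z})(p_i)$ is then well-defined and equals $a/2$ at $p_1$ and $-a/2$ at $p_2$, so $\nu_{p_1}=-a$ and $\nu_{p_2}=a$. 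Feeding this into Theorem \ref{theo:main1} gives $f_{\td M_1}(\td\Om_{a,b,\ee_1,\ee_2},Z)=a(\ee_2-\ee_1)+O(|\ee|^2)$.

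Finally, fixing a ratio $\ee_1:\ee_2$ with $\ee_1\neq\ee_2$ and letting the common scale be small, the leading term $a(\ee_2-\ee_1)$ dominates the $O(|\ee|^2)$ remainder, so $f_{\td M_1}(\td\Om_{a,b,\ee_1,\ee_2},Z)\neq0$; hence $\td\Om_{a,b,\ee_1,\ee_2}$ contains no cscK metric. The main point to get right is really the verification that the non-degeneracy hypothesis holds, so that the already-proved expansion of Theorem \ref{theo:main1} applies verbatim; once that is in place the corollary is a one-line consequence, and I would also be mildly careful about the precise sense in which ``$\ee_i$ small'' is taken (the cleanest formulation fixes the direction of $\ee$ and shrinks its norm).
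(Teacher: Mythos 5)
Your proposal is correct and follows essentially the same route as the paper: verify non-degeneracy of $Z$ (whose zero locus consists of the two curves $\{z=0\}\times\CC\PP^1$ and $\{z=\infty\}\times\CC\PP^1$, so the relevant case is the non-isolated one handled by Theorem \ref{theo:main2}), use that $f_M(\Om_{a,b},Z)=0$ since every class on $M$ is cscK, compute $\nu_{p_1}=-a$, $\nu_{p_2}=a$ from $\te_Z$ and $\underline{\te_Z}=-a/2$, and conclude $f_{\td M_1}(\td\Om_{a,b,\ee_1,\ee_2},Z)=a(\ee_2-\ee_1)+o(|\ee|)\neq 0$. Your extra caveat about fixing the ratio $\ee_1:\ee_2$ so the linear term dominates the remainder is a sensible precision that the paper glosses over (its exact formula in the following remark would remove the issue), but otherwise the argument is the paper's.
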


\begin{rem}
By our method of proving the main theorems, one can actually have the following exact formula:
$$f_Z(a,b,\ee_1,\ee_2)=-2a(a+b-\ee_2)-\frac{2(2a+2b-\ee_1-\ee_2)}{3(2ab-\ee_1^2-\ee_2^2)}(\ee_1^3-\ee_2^3+3a\ee_2^2-3a^2b).$$
\end{rem}

\subsection{$\CC\PP^1\times \CC\PP^1$ blowing up three points}

Let $M=\CC\PP^1\times\CC\PP^1$ and $\Om_{a,b}$ be as above, and $p_1=(0,0),p_2=(\infty,\infty), p_3=(0,\infty)$ are three points of $M$. We blow up $p_1,p_2,p_3$ to get $\td M_2$ and denote the blowing up map by $\pi$, with
exceptional divisors $E_1, E_2$ and $E_3$. $\td M$ is also a toric manifold. Write the homogeneous
coordinates of the two factors of $\CC\PP^1\times\CC\PP^1$ by $[z_0,
z_1]$ and $[w_0, w_1]$.Let $z=\frac {z_1}{z_0}=\frac 1{\td z}$ and $w=\frac
{w_1}{w_0}=\frac 1{ \td w}.$  Then the natural extensions of the
vector fields $z\pd{}{z}$ and $w\pd{}{w}$
form a basis of $\frak h_0(\td M_2)$. We denote these two vector fields by  $Z$ and $W$
 respectively.

The K\"ahler class we choose on $\td M_2$ is:
$$\td \Om_{a,b,\ee_1,\ee_2,\ee_3}=\pi^*\Om_{a,b}-\ee_1c_1([E_1])-\ee_2c_1([E_2])-\ee_3c_1([E_3]).$$
We want to compute $f_{\td M_2}(\td \Om_{a,b,\ee_1,\ee_2,\ee_3},Z):=f_Z(a,b,\ee_1,\ee_2,\ee_3)$ and similarly $f_W(a,b,\ee_1,\ee_2,\ee_3)$.

The holomorphy potential of $Z$ is still
$$\theta_Z=-a\frac{|z_1|^2}{|z_0|^2+|z_1|^2}.$$
And we still have $\underline{\te_Z}=-\frac{a}{2}$. We have the following:
$$\nu_{p_1}=-a, \nu_{p_2}=a, \nu_{p_3}=-a.$$
By Theorem \ref{theo:main2}, we have
$$f_Z(a,b,\ee_1,\ee_2,\ee_3)=a(\ee_2-\ee_1-\ee_3)+o(|\ee|).$$ Similarly, we have $$f_W(a,b,\ee_1,\ee_2,\ee_3)=b(\ee_2-\ee_1+\ee_3)+o(|\ee|).$$ Since $\ee_2-\ee_1-\ee_3$ and $\ee_2-\ee_1+\ee_3$ can not be both zero when all the $\ee_i$'s are positive, we have:

\begin{cor}
  Let $\td M_2$ and $\td \Om_{a,b,\ee_1,\ee_2,\ee_3}$ be as above. Then for $\ee_i>0$ small enough, there are no cscK metrics in the class $\td \Om_{a,b,\ee_1,\ee_2,\ee_3}$.
\end{cor}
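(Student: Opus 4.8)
The strategy is the classical Futaki obstruction: if a K\"ahler class on a compact K\"ahler manifold carries a constant scalar curvature metric, then its Futaki invariant vanishes against \emph{every} element of $\frak h_0$. So it suffices to produce a single holomorphic vector field on $\td M_2$ whose Futaki invariant in the class $\td\Om_{a,b,\ee_1,\ee_2,\ee_3}$ is nonzero; in particular one does not even need the full structure of $\frak h_0(\td M_2)$. The natural test fields are the extensions $\td Z,\td W$ of $z\pd{}{z}$ and $w\pd{}{w}$, which exist because these fields vanish at the three blow-up centres. Both are non-degenerate on all of $M=\CC\PP^1\times\CC\PP^1$: $\zero(Z)$ is the disjoint union of the two fibres $\{z=0\}$ and $\{z=\infty\}$, along each of which $Z$ acts on the normal bundle $TM/TZ_\la$ by $\pm1$, and symmetrically for $W$; moreover each of $p_1=(0,0)$, $p_2=(\infty,\infty)$, $p_3=(0,\infty)$ lies in $\zero(Z)\cap\zero(W)$. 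Hence Theorem~\ref{theo:main1} applies with $X=Z$ and with $X=W$.

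First I would record that every K\"ahler class $\Om_{a,b}$ on $M$ carries a cscK metric (a scaled product of round metrics), so $f_M(\Om_{a,b},Z)=f_M(\Om_{a,b},W)=0$ and the zeroth-order term drops out of the expansion. Then I would compute the local invariants $\nu_{p_i}$ via $\nu_{p_i}(\Om,X)=-2(\te_X-\underline{\te_X})(p_i)$. With the product form $\oo=a\,\partial\bar\partial\log(|z_0|^2+|z_1|^2)+b\,\partial\bar\partial\log(|w_0|^2+|w_1|^2)$ one has $\te_Z=-a\,|z_1|^2/(|z_0|^2+|z_1|^2)$, and the $z\mapsto 1/z$ symmetry gives $\underline{\te_Z}=-a/2$; hence $\te_Z-\underline{\te_Z}$ equals $a/2$ at the two centres with $z=0$ (namely $p_1,p_3$) and $-a/2$ at the centre with $z=\infty$ (namely $p_2$), these values being choice-independent by Lemma~\ref{lem:potential}. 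Substituting into Theorem~\ref{theo:main1} gives
$$f_{\td M_2}(\td\Om_{a,b,\ee_1,\ee_2,\ee_3},Z)=a(\ee_2-\ee_1-\ee_3)+O(\ee^2),$$
and the symmetric computation (swapping the two factors) gives $f_{\td M_2}(\td\Om_{a,b,\ee_1,\ee_2,\ee_3},W)=b(\ee_2-\ee_1+\ee_3)+O(\ee^2)$.

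Finally I would observe that these two functions cannot both vanish for small positive $\ee_i$. Along a ray $(\ee_1,\ee_2,\ee_3)=t(\hat\ee_1,\hat\ee_2,\hat\ee_3)$ in the open positive octant: if $\hat\ee_2\neq\hat\ee_1+\hat\ee_3$ then $f_{\td M_2}(\cdot,Z)=at(\hat\ee_2-\hat\ee_1-\hat\ee_3)+O(t^2)\neq0$ for $t$ small; otherwise $\hat\ee_2=\hat\ee_1+\hat\ee_3$, and then $f_{\td M_2}(\cdot,W)=bt(\hat\ee_2-\hat\ee_1+\hat\ee_3)+O(t^2)=2bt\,\hat\ee_3+O(t^2)\neq0$ for $t$ small since $\hat\ee_3>0$. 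Either way a cscK metric is obstructed. Granting Theorem~\ref{theo:main1}, this is essentially all there is to do; the one point that deserves care is that the threshold on $t$ produced this way a priori depends on the direction, because the first-order data degenerates along the line $\{\ee_1=\ee_2,\ \ee_3=0\}$. To obtain a direction-uniform conclusion I would push the computation to closed form, as in the Remark after the two-point example: $f_{\td M_2}(\cdot,Z)$ and $f_{\td M_2}(\cdot,W)$ are explicit rational functions regular at $\ee=0$ whose common zero set near the origin is precisely that line, which misses $\{\ee_i>0\}$. This closed-form bookkeeping, rather than any new geometric ingredient, is the only real obstacle in the corollary itself.
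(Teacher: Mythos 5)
Your proposal is correct and follows essentially the same route as the paper: apply the expansion theorem to the two generating fields $\td Z,\td W$, compute $\te_Z$, $\underline{\te_Z}=-a/2$ and the values $\nu_{p_i}$, obtain $f_{\td M_2}(\cdot,Z)=a(\ee_2-\ee_1-\ee_3)+o(|\ee|)$ and $f_{\td M_2}(\cdot,W)=b(\ee_2-\ee_1+\ee_3)+o(|\ee|)$, and note these linear forms cannot vanish simultaneously on the positive octant. Your extra remark on making the smallness of $\ee$ uniform in direction (via the closed-form expressions) is a refinement the paper does not spell out, but it is the same argument, not a different method.
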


%\subsection{$\CC\PP^2$ blowing up one point}

%Finally we gave an example when the vector field on the blown up surface has degenerate singularities.

\section{Relation with Stoppa's result}\label{secStoppa}

In this section, we point out the relation of Stoppa's theorem with
ours when the K\"ahler manifold is a polarized algebraic surface $(M,L)$, with K\"ahler class $\Om=c_1(L)$.
Let's first recall Stoppa's result.

Let $Z=\sum_i a_i p_i$ be a 0-dimensional cycle on a $n$-dimensional
 polarized algebraic manifold $(M,L)$,
  where $p_i\in M$ are different points and $a_i\in \mathbb{Z}_+$.
  We write $\tilde M:=Bl_ZM$ and denote by $p:\tilde M\to M$ the blowing up map,
   with exceptional divisor $E$.\footnote{As a manifold, $\tilde M$ is the same as $M$ blowing up all the points $p_i$. If we denote the exceptional divisors by $E_i$, then $E=\sum_i a_i E_i$.}
   Assume $X$ is a holomorphic vector field on $M$ that generates a holomorphic $\mathbb{C}^*$ action
  $\alpha(t)$.

Define $Z_t:=\alpha(t)Z$ and taking the flat closure of $\cup_{t\in
\mathbb{C}^*}Z_t\times \{t\}\subset M\times \mathbb{C}$, we get a
subscheme $Y$ of $M\times \mathbb{C}$. Then blowing up $Y$, we get
$\mathcal{X}=Bl_Y(M\times \mathbb{C})$, which is a test
configuration for $(\tilde M, \tilde L)$, where $\tilde L=\gamma p^*
L-E$ for some positive large integer $\gamma$, here $\frac{a_i}{\gamma}$ plays the same role as $\ee_i$ in our setting.
Stoppa got the
following formula for the algebraic Donaldson-Futaki invariant of
this test configuration: \beqn\label{eqStoppa1}
 F(\mathcal{X})=F(M,L,X)\gamma^n-\mathcal{CH}(\sum_i a_i^{n-1}p_i,\alpha)\frac{\gamma}{2(n-2)!}+O(1),
\eeqn where $F(M,L,X)$ is the algebraic Donaldson-Futaki invariant
of the product test configuration of $(M,L)$ with $\mathbb{C}^*$
action induced by $\alpha$, and $\mathcal{CH}(\sum_i
a_i^{n-1}p_i,\alpha)$ is the Chow weight of $\alpha(t)$ acting on
0-dimensional cycles.

In our case, the blowing up centers $p_i$ are non-degenerate  zero points of
$X$, so they are fixed by $\alpha(t)$, and hence
$Y=Z\times\mathbb{C}$. So $\mathcal{X}=Bl_ZM\times \mathbb{C}$ is a
product test configuration. In this case, both $F(\mathcal{X})$ and
$F(M,L,X)$ are the classical Calabi-Futaki invariants, up to a
universal constant factor (see \cite{[Do1]}). Observe that in this case, the holomorphic vector field
$\td X$ also have non-degenerate zero locus. This is because $X$ generates a $\CC^*$ action, so the linearization of
$X$ at any of its zero point is semisimple, and our Lemma \ref{lemzero} guarantees the non-degeneracy.

Now we give a formula for the Chow weight in this case, using the
potential of $X$. Assume $L^{\gamma}$ is very ample. For simplicity,
we also assume that the induced action of $\alpha$ on $H^0(X, \gamma
L)$ gives a 1-ps of $SL(N+1)$. We also assume that $(M,L)$ is
asymptotically Chow polystable.  First by Stoppa's work
(\cite{[Stoppa]} 14-15), we know that $\mathcal{CH}(\sum_i
a_ip_i,\alpha)=-\sum_i a_i^{n-1}\lambda(p_i)$. The definition of
$\lambda(p_i)$ is as follows. Since the $\mathbb{C}^*$ action
$\alpha(t)$ preserves the fiber of $L$ over $p_i$, we have a
well-defined notion of weight for this action. This is
$\lambda(p_i)$. We also write the induced linear
 $\mathbb{C}^*$ action on $\mathbb{P}^N$ as $\alpha(t)$. Suppose the image of $p_i$ is the point
$[1,0,\dots,0]$, and the action of $\alpha(t)$ is in a diagonal form
$diag(t^{\lambda_0},\dots,t^{\lambda_N})$. Then
$\lambda_0=-\gamma\lambda(p_i)$.

The holomorphy potential of $X$ is defined by the equation $-\b\p
\theta_X=i_X\omega$. Applying the $d$ operator, we get
$-\p\b\p\theta_X=L_X\omega.$ We can choose a special metric to
compute $\theta_X$.
 So let's assume that
$\omega$ is the pull-back metric
$$\frac{1}{\gamma}\p\b\p\log (|Z_0|^2+\dots+|Z_n|^2).$$
Since we assume that $(M,L)$ is asymptotically Chow polystable, we
can choose $\omega$ to be a balanced metric. The real 1-parameter
group associated with $X$ (and $\alpha(t)$) is
$\beta(s)=diag(e^{\lambda_0 s},\dots,e^{\lambda_N s})$. Then by the
definition of Lie derivatives, we have
$$L_X\omega =L_{ReX}\omega=\frac{d}{ds}|_{s=0}\beta(s)^*\omega=\frac{1}{\gamma}\p\b\p\frac{2\lambda_0|Z_0|^2+\dots+2\lambda_N |Z_N|^2}{|Z_0|^2+\dots+|Z_N|^2}.$$
The first equality is because the action is Hamiltonian. So we can
take
$$\theta_X=-\frac{1}{\gamma}\frac{2\lambda_0|Z_0|^2+\dots+2\lambda_N
|Z_N|^2}{|Z_0|^2+\dots+|Z_N|^2}$$ where the right handside means
restriction to the image of $M$ under Kodaira's embedding map.
Evaluate at $p_i$, we get
$\theta_X(p_i)=-\frac{2}{\gamma}\lambda_0=2\lambda(p_i)$. Since $\omega$ is balanced, we have $\underline{\te_X}=0$.
So $\nu_{p_i}(\Om,X)=-4\lambda(p_i)$. So in this case, our result coincides with (\ref{eqStoppa1}), up to a universal constant factor.\\

\noindent Haozhao Li,\\
 Department of Mathematics,  University of Science and Technology of China, Hefei, 230026, Anhui
province, China
\\ and \\
Wu Wen-Tsun Key Laboratory of Mathematics, USTC, Chinese Academy of Sciences, Hefei
230026, Anhui, PR China\\
Email: hzli@ustc.edu.cn\\

\noindent Yalong Shi,\\
Department of Mathematics, Nanjing University, Nanjing, 210093,
Jiangsu province, China. Email: shiyl@nju.edu.cn.

\end{document}